\newcommand{\Z}{\mathbb{Z}}
\newcommand{\Q}{\mathbb{Q}}
\newcommand{\R}{\mathbb{R}}
\newcommand{\F}{\mathbb{F}}
\newcommand{\s}{\mathfrak{s}}
\theoremstyle{plain}
\newtheorem{theorem}{Theorem}[section]
\newtheorem{lemma}[theorem]{Lemma}
\newtheorem{corollary}[theorem]{Corollary}
\theoremstyle{definition}
\newtheorem{definition}[theorem]{Definition}
\newtheorem{proposition}[theorem]{Proposition}
\theoremstyle{remark}
\newtheorem*{remark}{Remark}
\begin{document}

\title{Integral Klein bottle surgeries and Heegaard Floer homology}

\author{Robert DeYeso III}

\begin{abstract}
In this paper, we study which closed, connected, orientable three-manifolds $X$ containing a Klein bottle arise as integral Dehn surgery along a knot in $S^3$. Such $X$ are presentable as a gluing of the twisted $I$-bundle over the Klein bottle to a knot manifold, and we use a variety of Heegaard Floer type invariants to generate surgery obstructions. Suppose that $X$ is $8$-surgery along a genus two knot, and arises by gluing the twisted $I$-bundle over the Klein bottle to an $S^3$ knot complement. We show that $X$ is an L-space, it must be the dihedral manifold $\left(-1; \tfrac{1}{2}, \tfrac{1}{2}, \tfrac{2}{5}\right)$, and the surgery knot must be $K=T(2,5)$.
\end{abstract}

\maketitle

\section{Introduction}

For a knot $K \subset S^3$, denote the result of $\frac{p}{q}-$Dehn surgery on $K$ as $S^3_{p/q}(K)$. A celebrated theorem of Lickorish and Wallace shows that any closed, orientable $3$-manifold may be constructed by integral Dehn surgery on a link in $S^3$ \cite{Lic62}\cite{Wal60}. It is then natural to ask which manifolds may be realized as Dehn surgery on a knot in $S^3$, and we will focus on those containing Klein bottles.

Much is known about surgeries $S^3_{p/q}(K)$ containing Klein bottles. Gordon and Luecke \cite{GL95} showed that the surgery slope $p/q$ is integral when $K$ is hyperbolic, and Teragaito \cite{Te01} extended this condition to $K$ non-cabled and showed that $p$ is divisible by four. In \cite{IT03}, Ichihara and Teragaito gave bounds for $|p|$ in terms of the knot genus $g(K)$ when $K$ is non-cabled, and shortly after showed the same bound holds when $K$ is cabled, albeit allowing rational slopes \cite{IT05}. Their combined results show that if $S^3_r(K)$ contains a Klein bottle with $K$ non-trivial, then $|r| \leq 4g(K)+4$ with equality only occurring for specific knots. If $K$ is hyperbolic, then $|r| \leq 4g(K)$.

The lens spaces containing a Klein bottle are $L(4n, 2n \pm 1)$ \cite{BW69}, and Teragaito proved that a genus one knot admitting a surgery containing a Klein bottle is either a trefoil or a Whitehead double \cite{Te01}. When $g(K)=2$, the largest surgeries containing a Klein bottle are $S^3_{\pm 12}(K)$, for $K = T(2, \pm 5)$ or $K = T(2, \pm 3) \# T(2, \pm 3)$ due to \cite[Theorem 1]{IT05}. The next largest slope to consider is then $|r|=8$.

Suppose $X$ is realizable as $8$-surgery on a genus two knot $K$, and contains a Klein bottle. Observe that $X$ contains a regular neighborhood of a Klein bottle, that we will denote by $N$, which is the twisted $I$-bundle over the Klein bottle. While $X$ is can then be decomposed as a gluing of $N$ to a knot manifold, or rational homology solid torus, we will focus on gluings with $S^3$ knot complements so that $X = (S^3 \setminus \nu J) \cup_h N$ for some knot $J \subset S^3$. The gluing $h$ and its effects on $X$ are studied in Subsection \ref{sub:pairings}. By utilizing techniques involving knot Floer, Heegaard Floer, and bordered Heegaard Floer homology, we prove
\begin{theorem}
Let $X=S^3_8(K)$ with $g(K)=2$ contain a Klein bottle, and write $X = M \cup_h N$. If $M = S^3 \setminus \nu J$, then $X$ is the Seifert fibered manifold $(-1; \frac{1}{2}, \frac{1}{2}, \frac{2}{5})$ with base orbifold $S^2$, $J$ is the unknot, and $K=T(2,5)$.
\label{thm:main}
\end{theorem}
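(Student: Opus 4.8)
The plan is to reduce the whole statement to the single assertion that $X$ is a Heegaard Floer L-space, and then to prove that assertion by a bordered Floer calculation keyed to the decomposition $X = M\cup_h N$. For the reduction, observe that $g(K)=2$ gives $8 \ge 2g(K)-1$, so by the Ozsv\'ath--Szab\'o characterization of L-space surgeries, $X=S^3_8(K)$ is an L-space if and only if $K$ is an L-space knot; and an L-space knot of genus two is, by the classification of low-genus L-space knots, equal to $T(2,5)$. Moser's description of surgeries on torus knots then gives $X = S^3_8(T(2,5)) = \left(-1;\frac12,\frac12,\frac25\right)$, a small Seifert fibered space over $S^2$; since this manifold is atoroidal while $\partial N$ is incompressible in $N$, the torus $\partial N \subset X$ must compress in $M=S^3\sm\nu J$, so $M$ is a solid torus and $J$ is the unknot. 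Thus everything follows once we know $X$ is an L-space.

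To establish that, I would use the immersed-curve reformulation of bordered Heegaard Floer homology. The first ingredient is the invariant of $N$: the immersed curve (equivalently $\widehat{CFD}(N)$) of the twisted $I$-bundle over the Klein bottle is an explicit essential curve in the punctured boundary torus, reflecting the two Seifert fibrations of $N$ (over $D^2(2,2)$ and over the M\"obius band); this should be recorded, together with the action of the mapping class group of $\partial N$, so that the gluings $h$ are parametrized up to symmetry. The second ingredient is the invariant of $M=S^3\sm\nu J$, whose immersed curve $\gamma_J$ is the standard curve carrying $CFK^\infty(J)$ --- a horizontal line for $J$ unknotted, and otherwise a curve whose homologically trivial excursions reach vertical height governed by the Seifert genus (more precisely the Alexander polynomial) of $J$. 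By the pairing theorem, $\dim\widehat{HF}(X)$ is the minimal geometric intersection number of $h(\gamma_N)$ with $\gamma_J$, and the associated correction terms are computed from the same intersection picture.

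Now impose the two remaining constraints. Mayer--Vietoris on $X=M\cup_h N$ with $H_1(N)\cong\Z\oplus\Z/2$ and $H_1(X)\cong\Z/8$ cuts the gluings down to a one-parameter family, pinning the ``homological slope'' of $h(\gamma_N)$ on $\partial M$ and leaving a short list of candidate target manifolds --- Seifert fibered spaces over $S^2(2,2,n)$, together with a few lens and prism fillings that the existing genus bounds for Klein-bottle surgeries immediately exclude. On the other side, $X=S^3_8(K)$ with $g(K)=2$ has $d$-invariants of the restricted shape $d(S^3_8(K),i)=d(L(8,1),i)-2V_{\overline i}(K)$ with $V_0(K)\in\{0,1,2\}$, $V_1(K)\in\{0,1\}$, $V_2(K)=0$ (Ni--Wu). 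Comparing with the intersection data of $h(\gamma_N)$ against $\gamma_J$, a knotted $J$ forces the intersection count strictly above $|H_1(X)|=8$ --- the extra vertical travel of $\gamma_J$ produces additional intersection points with $h(\gamma_N)$ that cannot be removed --- which is incompatible with every $d$-invariant profile allowed for an $8$-surgery on a genus-two knot. Hence $J$ is the unknot and $\dim\widehat{HF}(X)=8$, i.e.\ $X$ is an L-space, which is what we needed; the correction-term comparison then also singles out $n=5$.

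I expect the core difficulty to lie in the second and third steps: computing $\widehat{CFD}(N)$ and organizing the gluings carefully enough to make the homological reduction precise, and then proving the strict rank inequality for knotted $J$ \emph{uniformly} over the whole gluing family rather than knot-by-knot. By contrast, once $X$ is known to be an L-space the remaining identifications ($K=T(2,5)$, $X=\left(-1;\frac12,\frac12,\frac25\right)$, $J$ unknot) are formal, and the correction-term matching that pins down $n=5$ reduces to a finite plumbing computation.
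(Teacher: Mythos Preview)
Your reduction step is attractive, but there is a genuine gap in the bordered argument, and it is exactly the case the paper has to work hardest to exclude.

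The claim ``a knotted $J$ forces the intersection count strictly above $8$'' is false. With $J=T(2,3)$ and $h$ a slope-$2$ cyclic gluing, the pairing of $\widehat{HF}(S^3\sm\nu T(2,3))$ with $h(\widehat{HF}(N))$ yields exactly eight intersection points; the resulting manifolds form an integral family of \emph{toroidal} L-spaces (this is the content of Figure~\ref{fig:plus2RHTpair} and the surrounding discussion). So the rank/intersection argument by itself cannot distinguish the trefoil case from the unknot case, and your proposed inequality $\dim\widehat{HF}(X)>8$ for knotted $J$ does not hold. The paper's Lemma~\ref{lem:MLemS3} uses precisely your style of argument to cut the knotted options down to $J=T(2,3)$; ruling out the trefoil then requires the refined relative $\mathbb{Q}$-grading computation of Section~\ref{sec:gr} (Lemma~\ref{lem:MLemgr}), producing a grading difference of $\tfrac{3}{2}$ that cannot occur among the $d$-invariants of $S^3_8(K)$ for a genus-two L-space knot. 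Your proposal does not anticipate this step; the vague appeal to ``$d$-invariant profiles'' would have to become exactly this graded bordered computation.

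A secondary issue: the assertion ``an L-space knot of genus two is $T(2,5)$'' is not something you can cite off the shelf. What is immediate is $\widehat{HFK}(K)\cong\widehat{HFK}(T(2,5))$; to get $K=T(2,5)$ the paper instead identifies the \emph{manifold} as $\left(-1;\tfrac12,\tfrac12,\tfrac25\right)$ via Doig's classification of finite non-cyclic surgeries, and then invokes Ni--Zhang's result that $8$ is a characterizing slope for $T(2,5)$. Your atoroidality trick (if $X$ is known to be this specific Seifert manifold then $\partial N$ must compress in $M$, hence $J$ is unknotted) is a nice alternative to the paper's route for recovering $J$, but it only becomes available \emph{after} the manifold is pinned down, which in turn only happens after the trefoil case has been excluded by the grading argument.
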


The theorem is stated in terms of positive surgery, but the analogous result for negative surgery holds with $K = T(2,-5)$. In order to obstruct the complements $S^3 \setminus \nu J$ in gluing, we use the large surgery theorem of Osv\'ath-Szab\'o and Rasmussen \cite{OS04b, Ras03} for Heegaard Floer homology; bordered Heegaard Floer invariants due to Lipshitz, Osv\'ath, and Thurston \cite{LOT18b}; and their immersed curves formulation developed by Hanselman, Rasmussen, and Watson \cite{HRW16, HRW18}. The versatility of the immersed curves package lends itself toward studying Dehn surgery problems, and has already led to fruitful results towards the cosmetic surgery conjecture (see \cite{Han19}).

The results in Theorem \ref{thm:main} actually hold for complements of knots $J$ in integer homology sphere L-spaces $Y$.  These are integer homology spheres $Y$ with the simplest Heegaard Floer homology, which is to say dim $\widehat{\textit{HF}}(Y) = |H_1(Y, \Z)|$. We briefly discuss this generalization in Section \ref{sec:YnotS3}. Figure \ref{fig:plus2RHTpair} shows the immersed curves machinery for $(S^3 \setminus \nu T(2,3)) \cup_h N$, where the count of intersection points corresponds to dim $\widehat{\textit{HF}}(X)$. This manifold (along with an integral family depending on $h$) are toroidal L-spaces, and were known to Hanselman, Rasmussen, and Watson in \cite{HRW18}. The main result of this paper shows that these toroidal manifolds cannot be realized as surgery along a knot.

\begin{figure}[!h]
\centering
\def\svgscale{1.1}
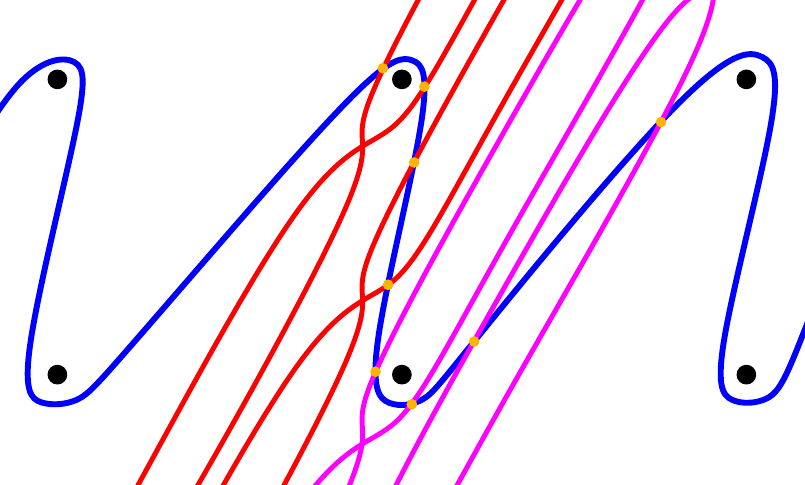
\caption{The pairing of immersed curves for $S^3 \setminus \nu T(2,3)$ in blue and $N$ in red and purple, that computes $\widehat{\textit{HF}}((S^3 \setminus \nu T(2,3)) \cup_h N)$.}
\label{fig:plus2RHTpair}
\end{figure}

Ichihara and Teragaito remark that the 2-bridge knot $6_2$ admits an $8$-surgery containing a Klein bottle \cite[Example 5.6]{IT03}. This knot is not an L-space knot, and so Theorem \ref{thm:main} then implies that $S^3_8(6_2)$ is obtained as $Y \setminus \nu J$ glued to $N$, with $Y$ not an integer homology sphere L-space. This example highlights that gluing along integer homology sphere L-space complements is special. Additionally, there are many examples of genus two cabled knots in $S^3$ admitting rational surgeries $S^3_{8/q}(K)$ that contain Klein bottles. When $K$ is a torus knot, the only example arises from the dihedral manifold obtained by Dehn filling $N$ stated in Theorem \ref{thm:main}. Otherwise $K$ is the $(2,1)$-cable of a genus one knot, and there are many rationally sloped fillings of the associated cable space for $S^3 \setminus \nu K$ that yield Klein bottles \cite[Corollary 7.3]{Gor83}.

\section*{Organization}

\,\,\,\,\,\,\, Throughout this paper we only consider positive surgeries, and remark that the analogous result for negative surgery follows by orientation reversal of both bordered invariants in pairing. Additionally, all manifolds are assumed to be compact, connected, oriented 3-manifolds, unless stated otherwise, and the coefficients in Floer homology are assumed to belong to $\F = \F_2$. We denote closed manifolds by $X$ or $Y$, and manifolds with (typically torus) boundary by $M$. Also knots $J \subset Y$ bounding a disk are said to be trivial, and figures will have the immersed curves invariant for knot complements in blue and the filling manifold in red.

Section \ref{sec:background} summarizes the relevant background from knot Floer, Heegaard Floer, and bordered Heegaard Floer homology. Additionally, it contains an overview of the immersed curves formulation of bordered Floer invariants (for manifolds with torus boundary), along with properties and symmetries.

Section \ref{sec:proof} introduces and constrains the possible gluings $h$, and establishes two of the three lemmas on the way to proving Theorem \ref{thm:main}. We handle the case that $J$ is trivial in Lemma \ref{lem:MLemFinite}, showing that $X$ must be a dihedral manifold that falls under Doig's classification of finite, non-cyclic surgeries for $p \leq 9$ in \cite{Doi15}. We then consider gluings with $J$ non-trivial in Lemma \ref{lem:MLemS3}, showing that $J$ must be a trefoil in order to possibly have the right Floer homology for $X$.

Section \ref{sec:gr} gives a brief overview of the refined grading on bordered invariants and their relation to the relative $\Q$-grading on $\widehat{\textit{HF}}$ under pairing. This extra structure is sufficient to establish the final lemma needed to complete the proof of Theorem \ref{thm:main}, showing that the toroidal gluings involving trefoil complements are obstructed. 

Section \ref{sec:YnotS3} mentions some generalizations when $Y$ is an integer homology sphere L-space, and then provides some homological constraints for general $Y$. Specifically, $Y$ must be an integer homology sphere.

\section*{Acknowledgements}
I thank Jonathan Hanselman, Jen Hom, Adam Levine, Robert Lipshitz, Liam Watson, and Biji Wong for their time in very helpful discussions. I would also like to thank Tye Lidman for suggesting a bordered Heegaard Floer approach to studying Klein bottle surgeries, and for giving incomparable guidance as an advisor. I was partially supported by NSF grant DMS-1709702.

\section{Background Material}
\label{sec:background}

We will assume the reader is familiar with the constructions of knot Floer homology \cite{OS04b, Ras03} and Heegaard Floer homology \cite {OS04d}, and just include relevant theorems before introducing bordered Heegaard Floer homology for manifolds with torus boundary (see \cite{LOT18b}) and its immersed curves formulation (see \cite{HRW16, HRW18}).

\subsection{Knot Floer and Heegaard Floer homology}
\label{subsec:2.1}

The Heegaard Floer homology theory of Ozsv\'ath and Szab\'o, and knot Floer homology theory of Ozsv\'ath and Szab\'o, and independently Rasmussen, have proven to be very powerful invariants of $3$-manifolds and knots. To an oriented knot $K$ in an integer homology sphere $Y$, Ozsv\'ath, Szab\'o, and Rasmussen associate a bi-graded, finitely generated vector space that decomposes as
\[
\displaystyle \widehat{\textit{HFK}}(Y, K) = \bigoplus_{m,a \in \Z} \widehat{\textit{HFK}}_m(Y, K, a).
\]
The integers $m$ and $a$ denote the Maslov (or homological) and Alexander gradings, respectively. We may often suppress the Maslov grading from notation when not needed, and the ambient 3-manifold from notation when $Y=S^3$.

For knots $K \subset S^3$, knot Floer homology categorifies the Alexander polynomial, and leads to knot genus and fiberedness detection results. Ozsv\'ath and Szab\'o showed in \cite{OS04b} that knot Floer homology detects the knot genus exactly via 
\[
g(K) = \max\{a \geq 0 \,\, | \,\, \widehat{\textit{HFK}}(K, a) \neq 0\}.
\]
Along with work of Ghiggini and Ni, it also detects precisely if a knot is fibered \cite{OS04e, Ghi08, Ni07}: 
\[
K \subset S^3 \,\, \text{fibered} \,\, \Leftrightarrow \widehat{\textit{HFK}}(K, g(K)) \cong \F.
\]
Further, knot Floer homology determines whether a knot in $S^3$ is a trefoil or figure-eight knot following from genus and fiberedness detection \cite{Ghi08}.

Given a closed, oriented 3-manifold $Y$, Ozsv\'ath and Szab\'o associate to it a finitely-generated vector space that decomposes as
\[
\widehat{\textit{HF}}(Y) = \bigoplus_{\s \in \text{Spin}^{c}(Y)} \widehat{\textit{HF}}(Y, \s),
\]
over $\text{spin}^{c}$ structures \cite{OS04d}. If $Y$ is a rational homology sphere, then dim $\widehat{\textit{HF}}(Y, \s) \geq 1$ for each $\s$. When we have equality for all $\s$, we say $Y$ is a (Heegaard Floer) L-space, generalizing the behavior exhibited by lens spaces. The Heegaard Floer groups admit a relative $\Z$-grading, also called their Maslov grading, that can be lifted to an absolute $\Q$-grading for a rational homology sphere together with the normalization that the generator of $\widehat{\textit{HF}}(S^3)$ has grading zero \cite{OS06}. For an L-space $Y$, the grading of the generator of $\widehat{\textit{HF}}(Y, \s)$ is an invariant of $Y$ known as its Heegaard Floer correction term, also known as its $d$-invariant, which we denote by $d(Y, \s)$ \cite{OS03a}.

To better understand $\widehat{\textit{HF}}(S^3_p(K), \s)$, we will use the so-called Large Surgery theorem. For a null-homologous knot $K$ in a 3-manifold $Y$, the full knot Floer complex $CFK^{\infty}(Y,K)$ is a $\Z \oplus \Z$-filtered chain complex. For our purposes, we may take $Y=S^3$ and simply write $CFK^{\infty}(K)$ in this setting. For $X$ a subset of $\Z \oplus \Z$, let $CX$ be the subgroup of $CFK^{\infty}(K)$ generated by those elements with filtration level $(i,j) \in X$. Further, for $s \in \Z$ define the subcomplexes 
\[
\widehat{\mathcal{A}}_s = C\{\max\{i,j-s\}=0\},
\]
and the respective homology groups $\widehat{A}_s = H_{\ast}(\widehat{\mathcal{A}}_s)$. This notation is suggestive of a connection to the hat-flavor of Heegaard Floer homology.

Let us identify $\text{Spin}^{c}(S^3_p(K))$ with $\Z/p\Z$ as in \cite[Subsection 2.4]{OS08}, and denote the correspondence using $[s] \in \text{Spin}^{c}(S^3_p(K))$ for $[s] \in \Z/p\Z$. The following theorem shows that $\widehat{\textit{HF}}(S^3_p(K),[s])$ and $\widehat{A}_s$ are isomorphic if $p$ is large relative to $s$.

\begin{theorem}[\cite{OS04b, Ras03}] For $p \gg 0$ and any $s \in \Z$ with $|s| \leq p/2$, there is an isomorphism
\begin{center}
$\widehat{\textit{HF}}(S^3_p(K)),[s]) \cong \widehat{A}_s$.
\end{center}
Here, $[s] \in \Z/p\Z$ is the corresponding $\text{spin}^{c}$ structure on $S^3_p(K)$.
\label{thm:largesurgery}
\end{theorem}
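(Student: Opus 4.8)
The plan is to realize $\widehat{\textit{HF}}(S^3_p(K),[s])$ at the chain level as a subquotient complex of $CFK^{\infty}(K)$, following the original argument of Ozsv\'ath--Szab\'o (Rasmussen's $\{i,j\}$-filtered reformulation gives an alternative route to the same conclusion). First I would fix a doubly-pointed Heegaard diagram $(\Sigma, \alpha_1, \dots, \alpha_g, \beta_1, \dots, \beta_g, w, z)$ for $(S^3, K)$ in which $\beta_g$ is a meridian of $K$ and the two basepoints $w, z$ lie on opposite sides of $\beta_g$. Replacing $\beta_g$ by a curve $\gamma_p$ representing the $p$-framed longitude --- obtained by taking a longitude and winding it $p$ times around the meridian inside a small annular neighborhood containing both $w$ and $z$ --- produces a Heegaard diagram for $S^3_p(K)$ with $w$ as its single basepoint.

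Next I would identify the generators. In the winding region the new curve $\gamma_p$ meets the nearby $\alpha$-curve in $p$ intersection points, and these distribute the generators of $\widehat{CF}(S^3_p(K))$ among the $p$ spin$^{c}$ structures; under the identification $\text{Spin}^{c}(S^3_p(K)) \cong \Z/p\Z$ of \cite[Subsection 2.4]{OS08}, one checks that --- once $p$ is large relative to $|s|$ and to the complexity of the knot diagram --- the generators lying in $[s]$ are in bijection with the generators of $\widehat{\mathcal{A}}_s = C\{\max\{i,j-s\}=0\}$, where the placement of each intersection point within the winding region records whether the constraint $i=0$ or $j=s$ is the active one.

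Then I would compare the differentials. The claim is that for $p \gg 0$ every index-one holomorphic disk contributing to $\widehat{\partial}$ on $S^3_p(K)$ in the spin$^{c}$ structure $[s]$ either has domain disjoint from the winding region --- in which case it is precisely one of the disks counted by the differential on $\widehat{\mathcal{A}}_s$, with its multiplicity at $w$ recording the drop in the $i$-coordinate and its ``would-be'' multiplicity at $z$ the drop in the $j$-coordinate --- or else its domain must sweep across the winding region and hence acquires a subregion whose area grows linearly in $p$. A Gromov-compactness / neck-stretching estimate then excludes the latter disks for $p$ sufficiently large. This yields a filtered chain isomorphism $\widehat{CF}(S^3_p(K),[s]) \cong \widehat{\mathcal{A}}_s$, and passing to homology gives $\widehat{\textit{HF}}(S^3_p(K),[s]) \cong \widehat{A}_s$; the hypothesis $|s| \le p/2$ is exactly what keeps us in the stable range for the chosen large $p$.

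The main obstacle is that last step: making rigorous the sense in which ``$p$ large'' annihilates all disks that wrap through the winding region. This requires either the area-filtration argument of \cite{OS04b} or a controlled neck-stretching degeneration of the almost-complex structure, together with enough uniformity that the identifications of both generators and surviving disks hold simultaneously for all $|s| \le p/2$. Once that analytic input is in place, the remainder is bookkeeping with the knot filtration on $CFK^{\infty}(K)$.
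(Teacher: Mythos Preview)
The paper does not supply its own proof of this theorem: it is quoted as a background result from \cite{OS04b, Ras03} and used as a black box (immediately followed by Proposition~\ref{prop:HF5of8}). So there is no in-paper argument to compare against.

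That said, your outline is a faithful sketch of the original Ozsv\'ath--Szab\'o argument: the doubly-pointed diagram with $\beta_g$ a meridian, the replacement by a $p$-times-wound longitude $\gamma_p$, the bijection between generators in the winding region and generators of $\widehat{\mathcal{A}}_s$, and the area/energy argument excluding disks that cross the winding region for $p$ large. The one point worth sharpening is your description of the ``main obstacle'': in the original treatment the exclusion of unwanted disks is not a Gromov-compactness or neck-stretching estimate per se, but rather a direct domain-positivity and filtration argument --- one shows that any domain leaving the winding region with the wrong multiplicities must have a negative coefficient somewhere, or else shifts the spin$^c$ structure. Framing it as an analytic compactness statement overstates the difficulty; the combinatorics of the winding region already force the identification once $p$ exceeds a bound depending on the genus (indeed $p \ge 2g(K)-1$ suffices). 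Otherwise your proposal is on target.
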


In particular, we may apply this theorem for all $\text{spin}^{c}$ structures of $S^3_8(K)$ with $g(K)=2$.

\begin{proposition}
Let $K \subset S^3$ have $g(K) = 2$. Then dim $\widehat{\textit{HF}}(S^3_8(K), [s]) = 1$ for at least five of the eight $[s] \in \text{Spin}^{c}(S^3_8(K))$.
\label{prop:HF5of8}
\end{proposition}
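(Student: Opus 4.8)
The plan is to apply the large surgery theorem in each of the eight $\text{spin}^c$ structures of $S^3_8(K)$, identify $\widehat{\textit{HF}}(S^3_8(K),[s])$ with $\widehat{A}_s$, and then invoke the standard fact that $\widehat{A}_s\cong\F$ whenever $|s|\ge g(K)$. I would fix representatives $s\in\{-3,-2,-1,0,1,2,3,4\}$ for $\text{Spin}^c(S^3_8(K))\cong\Z/8\Z$, so that every representative satisfies $|s|\le 4$. Although Theorem \ref{thm:largesurgery} is stated only for $p\gg 0$, the integer surgery (mapping cone) refinement expresses $\widehat{\textit{HF}}(S^3_8(K),[s])$ as the homology of a mapping cone in which the summand $\widehat{A}_s$ is linked to copies of $\widehat{\textit{HF}}(S^3)\cong\F$ by maps $\widehat{v}_t\colon\widehat{A}_t\to\F$ (isomorphisms for $t\ge g(K)$) and $\widehat{h}_t\colon\widehat{A}_t\to\F$ (isomorphisms for $t\le -g(K)$), the $A$-terms being $\widehat{A}_{s+8n}$ for $n\in\Z$. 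Since $g(K)=2$ and $|s|\le 4$, every $t\equiv s\pmod 8$ with $t\ne s$ has $|t|\ge 4>2$, so all terms other than $\widehat{A}_s$ cancel in pairs (choosing, when there is a choice, to preserve $\widehat{A}_s$), and the cone collapses to $\widehat{A}_s$. Hence $\widehat{\textit{HF}}(S^3_8(K),[s])\cong\widehat{A}_s$ for all eight $\text{spin}^c$ structures; equivalently, $8\ge 2g(K)-1$ is exactly the bound making the large surgery formula valid in every $\text{spin}^c$ structure here.

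Next I would recall why $\widehat{A}_s$ has rank one once $|s|\ge g(K)$. For $s\ge g(K)$ every generator of $CFK^\infty(K)$ lies in an Alexander grading at most $g(K)\le s$, so $\widehat{\mathcal A}_s=C\{\max\{i,j-s\}=0\}$ coincides with $C\{i=0\}$, whose homology is $\widehat{\textit{HF}}(S^3)\cong\F$; the case $s\le -g(K)$ follows from the symmetry $\widehat{A}_s\cong\widehat{A}_{-s}$. Combined with the previous paragraph, this gives $\dim\widehat{\textit{HF}}(S^3_8(K),[s])=1$ for every representative with $|s|\ge 2$.

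Finally I would count: among the representatives $\{-3,-2,-1,0,1,2,3,4\}$, exactly the five values $\{-3,-2,2,3,4\}$ satisfy $|s|\ge 2$, which produces the claimed five $\text{spin}^c$ structures with $\dim\widehat{\textit{HF}}=1$. (For $s\in\{-1,0,1\}$ the groups $\widehat{A}_s$ may have larger, necessarily odd, rank, so no stronger conclusion is available for a general genus-two knot.) The step I expect to require the most care is the first: one must genuinely justify that $p=8$ is ``large'' simultaneously for all eight $\text{spin}^c$ structures of a genus-two knot — that is, carry out (or cite in its sharp form) the truncation of the mapping cone — rather than appeal to the $p\gg 0$ statement of Theorem \ref{thm:largesurgery} verbatim.
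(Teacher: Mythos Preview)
Your proposal is correct and follows essentially the same route as the paper: identify $\widehat{\textit{HF}}(S^3_8(K),[s])$ with $\widehat{A}_s$ via large surgery, use genus detection to get $\widehat{A}_s\cong\F$ for $|s|\ge g(K)=2$, and count the five residue classes. The paper simply asserts that $p=8$ is large (implicitly using the well-known sharp bound $p\ge 2g(K)-1$), whereas you spell out the mapping-cone truncation explicitly; this extra care is warranted given that Theorem~\ref{thm:largesurgery} as stated only says $p\gg 0$.
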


\begin{proof}
We have that $\widehat{A}_{-s}$ and $\widehat{A}_s$ are isomorphic due to Lemma 2.3 of [HLW15], following from the fact that $CFK^{\infty}(K)$ is filtered chain homotopy equivalent under reversing the roles of $i$ and $j$. Since $\widehat{\textit{HFK}}$ detects the knot genus, we have $\widehat{A}_s = \widehat{\textit{HF}}(S^3) = \F$ for $|s| \geq g(K)$. Since $S^3_8(K)$ is large surgery, Theorem \ref{thm:largesurgery} implies $\widehat{\textit{HF}}(S^3_8(K),[s]) \cong \F$ for $s \in \Z$ satisfying $[s] \neq 0, \pm 1 \in \Z/8\Z$. 
\end{proof}

This simple structure of $\widehat{\textit{HF}}(S^3_8(K))$ will be very useful towards establishing Theorem \ref{thm:main}, where we appeal to counting the number of $\mathfrak{t} \in \text{Spin}^{c}(X)$ supporting dim $\widehat{\textit{HF}}(X, \mathfrak{t}) > 1$ as we vary $J$. This large surgery is also particularly special because of the following proposition.

\begin{proposition}
$S^3_8(K)$ is irreducible for any knot $K$ with $g(K) = 2$.
\label{prop:reducible8}
\end{proposition}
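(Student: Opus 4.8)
The plan is to use the cabling conjecture machinery, which is known in the cases relevant here. Suppose for contradiction that $S^3_8(K)$ is reducible for some $K$ with $g(K) = 2$. By a theorem of Gabai, if surgery on a knot in $S^3$ yields a reducible manifold then $K$ is not the unknot, so $K$ is nontrivial; and by Gordon--Luecke, the surgery slope of a reducible surgery along a nontrivial knot must be integral (which $8$ already is). The key input is the Gordon--Luecke / Gabai result that the only knots admitting reducible surgeries with a known classification are cables: more precisely, by work of Scharlemann and others, a reducible surgery along a knot $K$ forces $K$ to be a cable knot, say the $(a,b)$-cable of a companion $K'$, and the reducible slope is $r = ab$, with one summand of the surgered manifold being the lens space $L(a, b^2)$ (here I invoke that the cabling conjecture is a theorem in the reducible case via Gordon's work on cable knots together with Gabai's theorem that the only surgeries producing $S^1 \times S^2$ are trivial).

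Next I would run the genus constraint. If $K$ is the $(a,b)$-cable of $K'$ with $|a| \geq 2$, then a standard Seifert genus formula gives $g(K) = |a|\, g(K') + \frac{(|a|-1)(|b|-1)}{2}$. Since the reducible slope is $r = ab = 8$, the possibilities for $(|a|,|b|)$ (with $|a|\geq 2$ and $\gcd(a,b)=1$) are $(a,b) = (\pm 2, \pm 4)$ — excluded since $\gcd \neq 1$ — or the slope $8$ arising only from $|a|\cdot|b| = 8$ with coprime factors, i.e. $\{|a|,|b|\} = \{1,8\}$ or $\{8,1\}$. The case $|a| = 1$ is not a genuine cable, so we need $|a| = 8$, $|b| = 1$, giving $g(K) = 8\, g(K') + 0 = 8 g(K')$, which is never equal to $2$. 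Hence no cable knot with $g(K) = 2$ admits a reducible $8$-surgery, and $K$ nontrivial rules out the remaining case; this contradiction shows $S^3_8(K)$ is irreducible.

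The main obstacle is making sure the classification of reducible surgeries is invoked at exactly the right strength. The full cabling conjecture is open, but what is needed here — that a reducible surgery along a nontrivial knot in $S^3$ occurs only at the slope $r = ab$ along an $(a,b)$-cable with $|a| \geq 2$, and that $r$ must then be an integer with $|a|$ dividing $r$ — is a theorem combining Gabai (no surgery on a nontrivial knot gives $S^1\times S^2$, and more generally the surgered manifold has no $S^1\times S^2$ summand when $K$ is nontrivial), Gordon--Luecke (integrality), and Gordon's analysis of surgeries on satellite/cable knots \cite{Gor83} showing the reducing slope along a cable is the cabling slope. I would cite these and then the genus computation is immediate. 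An alternative, possibly cleaner route is purely Heegaard-Floer-theoretic: if $S^3_8(K) = Y_1 \# Y_2$ nontrivially, then one summand is a rational homology sphere with $|H_1| < 8$ and the other completes it, forcing $\widehat{\textit{HF}}(S^3_8(K))$ to have total rank a product; combined with the large-surgery identification $\widehat{\textit{HF}}(S^3_8(K),[s]) \cong \widehat A_s$ and the fact that $\widehat A_s \cong \F$ for $|s|\geq 2$ (five of the eight spin$^c$ structures), one checks the rank and spin$^c$ distribution is incompatible with any nontrivial connected sum — but this requires a little care with how spin$^c$ structures on $S^3_8(K)$ restrict, so I expect the cable-knot argument above to be the shortest to write down rigorously.
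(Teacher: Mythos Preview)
Your argument has a genuine gap at the first substantive step. You write that ``by work of Scharlemann and others, a reducible surgery along a knot $K$ forces $K$ to be a cable knot,'' and later that this ``is a theorem combining Gabai \ldots, Gordon--Luecke \ldots, and Gordon's analysis.'' But this statement is precisely the cabling conjecture, and it remains open in general. None of the results you cite establish it: Gordon--Luecke gives integrality of the reducing slope, Gabai rules out $S^1\times S^2$ summands, and Gordon's work on satellites identifies the reducing slope \emph{once you already know} $K$ is a cable. What is missing is any reason why a non-cabled $K$ cannot have a reducible $8$-surgery.

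The paper fills this gap by invoking the bound of Matignon--Sayari \cite{MS03}: if $K$ is non-cabled and $S^3_p(K)$ is reducible, then $1 < |p| \leq 2g(K)-1$. With $g(K)=2$ this gives $|p|\leq 3$, so $p=8$ forces $K$ to be a cable. Once that is in place, your Seifert-genus argument is actually cleaner than what the paper does next: you observe that the only coprime factorization of $8$ with winding number at least $2$ is $8\cdot 1$, and then $g(K)=8\,g(K')$ is never $2$. The paper instead decomposes $S^3_8(K)\cong L(8,1)\# S^3_{1/8}(K')$ and uses the K\"unneth formula together with the large-surgery theorem to force $K'$ to be an L-space knot with trivial $\nu$-invariant, hence the unknot. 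Both routes finish the proof; yours is shorter, but it only becomes a proof once you replace the appeal to the cabling conjecture with the Matignon--Sayari genus bound.
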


\begin{proof}
To generate a contradiction, suppose that $S^3_8(K)$ is reducible. From \cite{MS03}, we see that $S^3_p(K)$ reducible implies $1 < |p| \leq 2g(K)-1$ for $K$ non-cabled. So it must be the case that $K$ is the $(r,s)$-cable of some knot $K'$, where $r$ and $s$ are  coprime and positive with $s>1$. The cabling conjecture holds for cable knots, and so the slope $p=rs$ provided by the cabling annulus is the only reducing slope for $S^3_p(K)$. In this case we have $S^3_{rs}(K) \cong L(s,r) \# S^3_{\frac{r}{s}}(K')$, and so
\[
S^3_{8}(K) \cong L(8,1) \# S^3_{\frac{1}{8}}(K').
\]

Let $[\s_i] \in \text{Spin}^{c}(S^3_8(K))$ restrict to $[\s'_j] \times [\s_0]$, where $[\s'_j] \in \text{Spin}^{c}(L(8,1))$ and $[\s_0] \in \text{Spin}^{c}(S^3_{\frac{1}{8}}(K'))$. The K\"unneth formula for the hat-flavor of Heegaard Floer homology \cite[Theorem 1.5]{OS04c} implies
\begin{align*}
\widehat{\textit{HF}}(S^3_8(K), [\s_i]) &= H_{\ast}(\widehat{CF}(L(8,1),[\s'_j]) \otimes_{\F} \widehat{CF}(S^3_{\frac{1}{8}}(K'), [\s])) \\
&= \widehat{\textit{HF}}(S^3_{\frac{1}{8}}(K'), [\s]),
\end{align*}
since $L(8,1)$ is a lens space. Theorem \ref{thm:largesurgery} forces dim $\widehat{\textit{HF}}(S^3_8(K), [\s_i]) = \text{dim} \, \widehat{A}_i$, and since $\text{dim} \, \widehat{A}_s = 1$ for $|s| \geq g(K)$, we see that $S^3_8(K)$ is an $L$-space and $K'$ is an $L$-space knot. From \cite[Proposition 9.5]{OS11}, the $\nu$ invariant for $K'$ must be trivial, which implies $K'$ is the unknot by \cite[Proposition 9.6]{OS11}.
Therefore $K$ is trivial as the $(1,8)$-cable of the unknot, and so $S^3_8(K) \cong L(8,1)$, yielding a contradiction.
\end{proof}

We also have the following immediate corollary, which is useful in Section \ref{sec:YnotS3} for the case when $Y \neq S^3$.

\begin{corollary}
Let $M = Y \setminus \nu J$ be a knot manifold. If $X = M \cup_h N$ is realizable as $S^3_8(K)$ for $g(K) = 2$, then $M$ is irreducible. 
\label{cor:irreduciblecomplement}
\end{corollary}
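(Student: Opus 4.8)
The plan is to deduce this directly from Proposition~\ref{prop:reducible8}, which tells us that $X = S^3_8(K)$ is irreducible whenever $g(K) = 2$. Suppose for contradiction that $M$ is reducible, and choose an embedded $2$-sphere $S \subset \operatorname{int}(M)$ that does not bound a $3$-ball in $M$. Viewing $S$ inside $X$ and invoking irreducibility of $X$, the sphere $S$ bounds a $3$-ball $B \subset X$; in particular $S$ is separating in $X$.

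Next I would pin down where $B$ sits relative to the decomposition $X = M \cup_{T^2} N$, where $T^2 = \partial N = \partial M$. Since $S$ does not bound a ball in $M$, the ball $B$ cannot be contained in $M$, so $B$ must meet $\operatorname{int}(N) = X \setminus M$. Now $\operatorname{int}(N)$ is connected and disjoint from $S$ (as $S \subset \operatorname{int}(M)$), and $S$ separates $X$ into $B$ and its complementary submanifold; hence $\operatorname{int}(N)$ lies entirely in one of these two sides. Because $B$ meets $\operatorname{int}(N)$, that side is $B$, and since $T^2$ is also disjoint from $S$ we in fact get $N \subset \operatorname{int}(B)$. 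Thus $N$ is contained in a $3$-ball.

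The key point is that this is impossible. A $3$-ball embeds in $S^3$, so $N$ would embed in $S^3$; but $N$ is the twisted $I$-bundle over the Klein bottle, which contains an embedded Klein bottle (its zero section), and no closed non-orientable surface embeds in $S^3$. Alternatively, one can argue homologically: an embedding $N \hookrightarrow S^3$ with separating torus boundary would, via Mayer--Vietoris for $S^3 = N \cup_{T^2} Z$ and the vanishing of $H_1(S^3)$ and $H_2(S^3)$, force an isomorphism $H_1(T^2) \cong H_1(N) \oplus H_1(Z)$, so that $H_1(N) \oplus H_1(Z) \cong \Z^2$ is torsion-free, contradicting $H_1(N) \cong \Z \oplus \Z/2\Z$. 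Either way we reach a contradiction, so $M$ is irreducible.

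I expect the only mildly delicate step to be the middle one: the point-set bookkeeping needed to conclude that the ball $B$ must swallow all of $N$ rather than merely intersect it. Once $N$ is trapped inside a ball the contradiction is immediate, and everything else is a direct appeal to Proposition~\ref{prop:reducible8} together with the standard structure of $N$ (in particular $H_1(N) \cong \Z \oplus \Z/2\Z$, which is recorded in the discussion of the gluing in Subsection~\ref{sub:pairings}).
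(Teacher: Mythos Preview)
Your argument is correct. The paper does not actually write out a proof of this corollary; it is stated as an ``immediate corollary'' of Proposition~\ref{prop:reducible8}, and your proof is a clean way to supply the omitted details: use irreducibility of $X$ to cap an essential sphere $S\subset M$ by a ball $B$, observe that $B$ must swallow $N$, and then derive a contradiction from the fact that $N$ (hence a Klein bottle) cannot embed in $S^3$. The point-set step you flagged is fine as written.

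A marginally shorter route, and perhaps the one implicitly intended by ``immediate'', is to phrase things via connected sum. Since $M$ is a knot manifold (a rational homology solid torus), $H_2(M)=0$, so any essential sphere in $M$ is separating and exhibits $M$ as $M'\#\,Y'$ with $Y'$ closed and $Y'\neq S^3$. Then $X=(M'\cup_h N)\#\,Y'$ is a nontrivial connected sum, contradicting Proposition~\ref{prop:reducible8}. This avoids the ball-swallowing step entirely, at the cost of invoking $H_2(M)=0$; your approach instead uses the specific topology of $N$ and works without that homological input. Either way the corollary follows.
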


\subsection{Bordered Heegaard Floer invariants and the pairing theorem}
\label{subsec:BHF}

Bordered Heegaard Floer homology, introduced by Lipshitz, Ozsv\'ath, and Thurston, provides a way of computing $\widehat{\textit{HF}}(X)$ by decomposing $X$ along an essential surface, and then recovering its Floer homology by a suitable means of pairing the relative Floer invariants for the decomposed pieces \cite{LOT18b}. While defined for general manifolds with connected boundary, we will only be interested in applying the theory to manifolds with torus boundary.

Let $M$ be an orientable 3-manifold with torus boundary and choose $\alpha, \beta$ in $\partial M$ with $\beta \cdot \alpha = 1$, so that $(\alpha, \beta)$ forms a parameterization of $\partial M$. A bordered 3-manifold is such a triple $(M, \alpha, \beta)$. For a bordered 3-manifold $M_2$, they associate a differential module called a type D structure $\widehat{\textit{CFD}}(M_2, \alpha_2, \beta_2)$. For another bordered 3-manifold $M_1$, they associate a suitably dual object $\widehat{CFA}(M_1, \alpha_1, \beta_1)$, and prove Theorem \ref{thm:boxpairing} below showing that $\widehat{\textit{CF}}(M_1 \cup_h M_2)$ is modeled by the box tensor product of these modules (with corresponding parameterizations). 

\begin{theorem}[{\cite[Theorem 10.42]{LOT18b}}] Consider the pairing $X = M_1 \bigcup_h M_2$, where the $M_i$ are compact, oriented 3-manifolds with torus boundary and $h: \partial M_2 \rightarrow \partial M_1$ is an orientation reversing homeomorphism. Then
\[
\widehat{\textit{HF}}(X) \cong H_{\ast}(\widehat{CFA}(M_1, \alpha_1, \beta_1) \boxtimes \widehat{\textit{CFD}}(M_2, h^{-1}(\beta_1), h^{-1}(\alpha_1))),
\]
where the isomorphism is one of relatively graded vector spaces that respects the $\text{Spin}^{c}$ decomposition. 
\label{thm:boxpairing}
\end{theorem}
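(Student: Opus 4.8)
The plan is to prove this by a direct holomorphic-curve analysis in a Heegaard diagram for $X$ assembled by gluing compatible bordered diagrams for the two pieces. First I would fix a pointed matched circle $\mathcal{Z}$ for the torus — equivalently, the parameterization data $(\alpha,\beta)$ — and choose a bordered Heegaard diagram $\mathcal{H}_1$ for $(M_1,\alpha_1,\beta_1)$ with boundary identified with $\mathcal{Z}$, and a bordered Heegaard diagram $\mathcal{H}_2$ for $(M_2, h^{-1}(\beta_1), h^{-1}(\alpha_1))$ with boundary identified with $-\mathcal{Z}$, so that the two boundary parameterizations are glued by the orientation-reversing map $h$. Gluing $\mathcal{H}_1$ and $\mathcal{H}_2$ along the boundary circle, and filling in the basepoint region, produces a closed Heegaard diagram $\mathcal{H}$ for $X = M_1 \cup_h M_2$. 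One must verify that $\mathcal{H}$ genuinely presents $X$ and that, after winding curves near the gluing region if necessary, it can be taken admissible, so that $\widehat{\textit{HF}}(X)$ is computed from $\mathcal{H}$.

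Next I would match generators. The generators of $\widehat{\textit{CF}}(\mathcal{H})$ are tuples of intersection points hitting every $\beta$-circle exactly once; cutting along the gluing circle, each such tuple splits uniquely as a pair $(\mathbf{x},\mathbf{y})$ with $\mathbf{x}$ a generator of $\mathcal{H}_1$ and $\mathbf{y}$ a generator of $\mathcal{H}_2$, and the two occupy complementary subsets of the boundary $\alpha$-arcs. This is exactly the set of generators of the box tensor product $\widehat{CFA}(M_1,\alpha_1,\beta_1) \boxtimes \widehat{\textit{CFD}}(M_2, h^{-1}(\beta_1), h^{-1}(\alpha_1))$, so the two chain groups agree as $\F$-vector spaces, and the identification respects the homology class of the relevant domains, hence the $\text{Spin}^{c}$ and relative Maslov data.

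The heart of the argument is identifying the differentials, and for this I would run a neck-stretching degeneration along the gluing region. As the length of the connect-sum neck separating $\mathcal{H}_1$ from $\mathcal{H}_2$ tends to infinity, an index-one holomorphic curve in $\mathcal{H}$ counted by $\partial_{\widehat{\textit{CF}}(X)}$ degenerates into a matched pair: a curve in $\mathcal{H}_1$ with Reeb-chord asymptotics along $\partial M_1$, which contributes to the $A_\infty$ operations $m_k$ of $\widehat{CFA}(M_1)$, together with a curve in $\mathcal{H}_2$ with the complementary asymptotics, contributing to the type-$D$ operation $\delta^1$ of $\widehat{\textit{CFD}}(M_2)$; the compatibility of the two sequences of Reeb chords is precisely multiplication in the torus algebra $\mathcal{A}(T^2)$. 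Conversely, a gluing theorem shows that each such rigid matched pair glues back to a rigid curve in the stretched diagram $\mathcal{H}$. Summing over all matchings of chords reproduces the box-tensor differential $\partial^{\boxtimes} = (\partial \otimes \mathrm{id}) + (m \otimes \mathrm{id})(\mathrm{id} \otimes \delta)$, which is the content of the pairing. Combined with standard transversality and the fact that stretched and unstretched complex structures give chain homotopy equivalent complexes, this yields a chain homotopy equivalence $\widehat{\textit{CF}}(X) \simeq \widehat{CFA}(M_1) \boxtimes \widehat{\textit{CFD}}(M_2)$; passing to homology gives the stated isomorphism.

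For the graded and $\text{Spin}^{c}$-respecting refinements, I would track the homology class of each domain throughout: it determines a $\text{Spin}^{c}$ structure on $X$ and, relative to that, a Maslov grading, and the splitting/gluing of domains along the neck is additive on this data, so the entire degeneration argument is compatible with the $\text{Spin}^{c}$ decomposition and with the relative grading. The main obstacle is the holomorphic-curve analysis under neck-stretching: proving the correct compactness statement — that every limit is an honest matched pair with no spurious boundary degenerations, and that Maslov indices add as claimed — together with the matching gluing theorem. This is exactly the technical core of Chapter~10 of \cite{LOT18b}, and everything else is bookkeeping around it.
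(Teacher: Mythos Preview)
The paper does not give its own proof of this statement: it is quoted verbatim as \cite[Theorem 10.42]{LOT18b} and used as a black box, with no argument supplied beyond the citation. So there is nothing in the paper to compare your proposal against.

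That said, what you have written is a faithful high-level sketch of the Lipshitz--Ozsv\'ath--Thurston proof in the cited reference: glue bordered Heegaard diagrams to get a closed diagram for $X$, identify generators of $\widehat{\textit{CF}}(X)$ with pairs occupying complementary idempotents, and run a neck-stretching/gluing argument to match the closed differential with $\partial^{\boxtimes}$, tracking $\text{Spin}^c$ and grading data via domains. You correctly flag that the substantive content is the compactness and gluing analysis under degeneration, which is the technical core of Chapter~10 of \cite{LOT18b}. As an outline there is no genuine gap, but be aware that in the context of this paper the theorem is invoked, not reproved, so a one-line citation would suffice.
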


\begin{wrapfigure}{r}{0.4\linewidth}
\vspace{1\intextsep}
\labellist
\small\hair 2pt
\pinlabel $\iota_0$ at -3 25
\pinlabel $\iota_1$ at 128 24
\pinlabel $\rho_1$ at 60 56
\pinlabel $\rho_2$ at 60 32
\pinlabel $\rho_3$ at 60 8
\endlabellist
\centering
\includegraphics[scale=1]{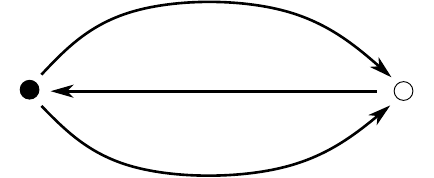}
\vspace{1\intextsep}
\caption{A quiver for $\mathcal{A}(\mathbb{T})$}
\label{fig:Aquiver}
\end{wrapfigure}

The type A and type D structures are types of modules over the differential graded torus algebra $\mathcal{A}(\mathbb{T})$. This algebra is generated over $\F$ by the elements $\rho_1, \rho_2, \rho_3$ and idempotents $\iota_0$ and $\iota_1$, and multiplication in $\mathcal{A}$ is described by the quiver in Figure \ref{fig:Aquiver} and further satisfies the relations $\rho_2\rho_1 = \rho_3\rho_2 = 0$. We will concatenate the multiplication using the common shorthand notation of $\rho_{12}=\rho_1\rho_2, \,\,\, \rho_{23}=\rho_2\rho_3$, and $\rho_{123}=\rho_1\rho_2\rho_3$. In this way, $\{\iota_0, \iota_1, \rho_1, \rho_2, \rho_3, \rho_{12}, \rho_{23}, \rho_{123}\}$ is an $\F$-basis for $\mathcal{A}$. Finally, let $\mathcal{I}$ denote the subring of idempotents.

The module $\widehat{\textit{CFA}}(M)$ is a type A structure, which is a right-$\mathcal{A}_{\infty}$ module over $\mathcal{A}$. The module structure comes with a family of maps
\[
m_{k+1}: \widehat{\textit{CFA}}(M) \otimes_{\mathcal{I}} \mathcal{A} \otimes_{\mathcal{I}} \cdots \otimes_{\mathcal{I}} \mathcal{A} \rightarrow \widehat{\textit{CFA}}(M),
\]
and we will use commas to separate the tensor factors of $m_{k+1}$ in the future. A type D structure over $\mathcal{A}$ is a left $\mathcal{I}$-module $V$ with a splitting over a left action of the idempotents $V \cong \iota_0 V \oplus \iota_1 V$, equipped with an $\mathcal{I}$-linear map $\delta^1: V \rightarrow \mathcal{A} \otimes V$. The map $\delta^1$ satisfies a compatibility condition that ensures $\partial(a \otimes \mathbf{x}) = a \cdot \delta^1 \mathbf{x}$ is a differential on $\mathcal{A} \otimes_{\mathcal{I}} V$. This gives $\mathcal{A} \otimes_{\mathcal{I}} V$ a left differential module structure over $\mathcal{A}$, and we will typically view $\widehat{\textit{CFD}}(M)$ as this differential module. This type D structure also comes with a collection of recursively defined maps $\delta^k: V \rightarrow \mathcal{A}^{\otimes k} \otimes V$ with $\delta^0: V \rightarrow V$ the identity and $\delta^{k} = (\text{id}_{\mathcal{A}^{\otimes (k-1)}} \otimes \partial \delta^1) \circ \delta^{(k-1)}$.

The chain complex $\widehat{CFA}(M_1, \alpha_1, \beta_1) \boxtimes \widehat{\textit{CFD}}(M_2, h^{-1}(\beta_1), h^{-1}(\alpha_1))$ in Theorem \ref{thm:boxpairing} is obtained from $\widehat{CFA}(M_1, \alpha_1, \beta_1) \otimes_{\mathcal{I}} \widehat{\textit{CFD}}(M_2, h^{-1}(\beta_1), h^{-1}(\alpha_1))$, and has differential $\partial^{\boxtimes}$ given by
\[
\partial^{\boxtimes} (\mathbf{x} \otimes \mathbf{y}) = \sum^{\infty}_{k=0}(m_{k+1} \otimes \text{id})(\mathbf{x} \otimes \partial^{k}(\mathbf{y}).
\]
This sum is finite if the type D structure is bounded, which is to say $\delta^{k}$ vanishes for sufficiently large $k$.
We will refer to it as the box tensor product of the type A and D structures involved. This is a computable model of the $\mathcal{A}_{\infty}$ tensor product and enables a proof of Theorem \ref{thm:boxpairing}. However, computations with the box tensor product are cumbersome, and so we will use a geometric interpretation of these invariants as often as possible.

\subsection{Bordered invariants as train tracks}
\label{subsec:IC}

In \cite{HRW16, HRW18}, Hanselman, Rasmussen, and Watson give a geometric construction of $\widehat{\textit{CFD}}(M)$ as a collection of immersed curves, and prove an analogue of the pairing theorem that uses these objects. This invariant lives in the punctured torus, which we now define.

\begin{definition}
Let the punctured torus $T_M$ be defined as $\left( H_1(\partial M; \R) / H_1(\partial M; \Z) \right) \setminus \{z\}$, where $z = (1-\epsilon, 1-\epsilon)$ for $\epsilon$ small. We refer to $z$ as the marked point, and orient $T_M$ so that the $y$-axis projects to $\alpha$ and the $x$-axis projects to $\beta$, with $\alpha, \beta$ specifying the handle decomposition of $\partial M \setminus z$.
\end{definition}

The prototype immersed curves invariant is a type of train track in $T_M$, a construction we briefly introduce. Along the way, we will apply these concepts to $\widehat{\textit{CFD}}(N)$, for $N$ the twisted $I$-bundle over the Klein bottle. Given a type D structure $\widehat{\textit{CFD}}(M, \alpha, \beta)$ we can conveniently express it as a decorated graph, with vertex set generating $V$ and edge set describing $\delta^1$. Length $k$ directed paths are associated to $\delta^k$, and correspondingly the type D structure is bounded if it admits no directed cycles. The vertices labeled $\bullet$ correspond to $\iota_0$ generators, and those labeled $\circ$ correspond to $\iota_1$ generators. For the edges, a term $\rho_{I} \otimes \mathbf{x}_2 \in \delta^1 \mathbf{x}_1$ gives a directed edge labeled $\{I\}$ from $\mathbf{x}_1$ to $\mathbf{x}_2$.

\begin{wrapfigure}{r}{0.4\linewidth}
\vspace{1\intextsep}
\labellist
\small\hair 2pt
\pinlabel $3$ at 31 68
\pinlabel $1$ at -3 32
\pinlabel $2$ at 31 -6
\pinlabel $123$ at 70 32
\pinlabel $12$ at 93 32
\pinlabel $12$ at 139 32
\endlabellist
\centering
\includegraphics[scale=1]{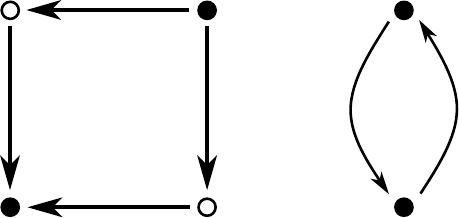}
\vspace{1\intextsep}
\caption{Left: $\widehat{\textit{CFD}}(N, \phi_1, \phi_0, \s_1)$. Right: $\widehat{\textit{CFD}}(N, \phi_1, \phi_0, \s_0)$.}
\label{fig:N0and1DstrNoTwist}
\end{wrapfigure}

The bordered invariant for $N$ is computed in \cite{BGW13} from a bordered Heegaard diagram (we caution the reader that the opposite idempotent decomposition of the torus algebra $\mathcal{A}(\mathbb{T})$ is used in this reference). Supporting two different Seifert structures, $\partial N$ is parameterized by the dual slopes $\phi_0$ and $\phi_1$ that correspond to the fiber slope of the structure with base orbifold a M\"obius band and $D^2(2,2)$, respectively. The slope $\phi_0$ is the rational longitude of $N$, or the unique slope in $\partial N$ that includes in $H_1(N)$ with finite order \cite{Wat12}. The curve $\phi_1$ includes in $H_1(N)$ as twice a primitive curve, and for this reason $N$ admits two torsion $\text{spin}^{c}$ structures $\s_0$ and $\s_1$. We will always assume this (standard) parameterization is taken before acting on $\partial N$. The decorated graph for $\widehat{\textit{CFD}}(N, \phi_1, \phi_0)$ is shown in Figure \ref{fig:N0and1DstrNoTwist}.

\begin{figure}[ht!]
\labellist
\small\hair 2pt
\pinlabel $\rho_1$ at 35 42
\pinlabel $z$ at 71 65
\pinlabel $\rho_2$ at 160 42
\pinlabel $z$ at 180 65
\pinlabel $\rho_3$ at 268 42
\pinlabel $z$ at 290 65
\pinlabel $\rho_{12}$ at 375 25
\pinlabel $z$ at 405 65
\pinlabel $\rho_{23}$ at 470 40
\pinlabel $z$ at 517 65
\pinlabel $\rho_{123}$ at 583 42
\pinlabel $z$ at 629 65
\endlabellist
\centering
\includegraphics[scale=0.65]{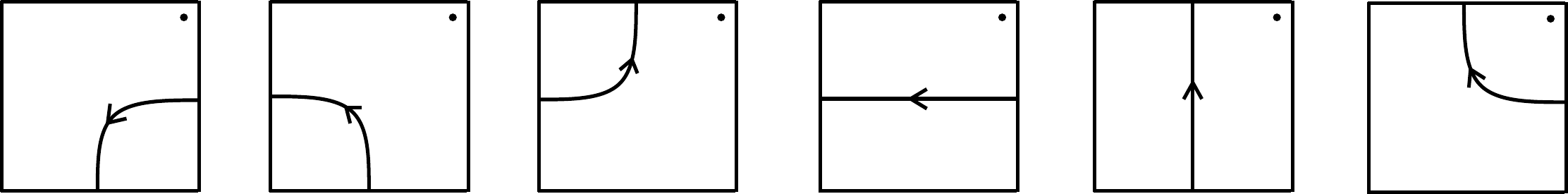}
\caption{Train track segments in $T_M$ corresponding to the $\rho_I \otimes$ terms appearing in $\delta^1$.}
\label{fig:ReebChords}
\end{figure}

Given a decorated graph for $\widehat{\textit{CFD}}(M, \alpha, \beta)$, we may construct a train track $A(\theta_M)$ in $T_M$ as follows. First, embed the $\bullet$ vertices corresponding to $V_0 = \iota_0 V$ generators along $\alpha$ in the interval $0 \times [\frac{1}{4}, \frac{3}{4}]$, and the $\circ$ vertices corresponding to $V_1 = \iota_1 V$ generators along $\beta$ in the interval $[\frac{1}{4}, \frac{3}{4}] \times 0$. Next, the edges describing $\delta^1$ are embedded in $T_M$ according to Figure \ref{fig:ReebChords}. We refer to such an $A(\theta_M)$ as a type A realization of $\widehat{\textit{CFD}}(M, \alpha, \beta)$. For example, this process for $\widehat{\textit{CFD}}(N)$ is shown in Figure \ref{fig:N1and2Abase}. For pairing, we will also need the dual type D realization $D(\theta_M)$, which is generated by reflecting $A(\theta_M)$ across the anti-diagonal in $T_M$.

\begin{figure}[!ht]
\centering
\def\svgscale{0.6}
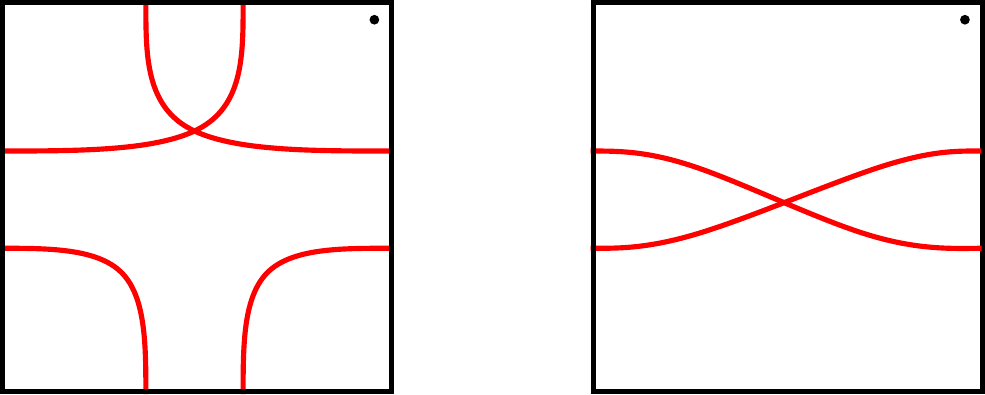
\caption{The type A realizations for both components of $\widehat{\textit{CFD}}(N, \phi_1, \phi_0)$.}
\label{fig:N1and2Abase}
\end{figure}

Now we describe the prototype geometric analogue of Theorem \ref{thm:boxpairing} for a gluing $M_1 \cup_h M_2$ from \cite{HRW16}. Divide $T_{M_1}$ into four quadrants, and include $A(\theta_{M_1})$ in the first quadrant and extend horizontally and vertically. Likewise, include $D(\theta_{M_2})$ into the third quadrant and extend horizontally and vertically. This is shown on the left side of Figure \ref{fig:T23N0BoxLift}. Let $\mathcal{C}(A(\theta_{M_1}), D(\theta_{M_2}))$ be the vector space over $\F$ generated by intersections between $A(\theta_{M_1})$ and $D(\theta_{M_2})$, and $d^{\theta}$ a linear map counting bigons analogous to the Whitney disks of Heegaard Floer homology. Hanselman, Rasmussen, and Watson show that under mild hypotheses, $\mathcal{C}(A(\theta_{M_1}), D(\theta_{M_2}), d^{\theta})$ forms a chain complex that may be identified with $\mathcal{C}(\widehat{CFA}(M_1, \alpha_1, \beta_1) \boxtimes \widehat{\textit{CFD}}(M_2, h^{-1}(\beta_1), h^{-1}(\alpha_1)), \partial^{\boxtimes})$ \cite[Theorem 16]{HRW16}. This prototype pairing theorem will be used in Section \ref{sec:gr} to handle grading information used to rule out the trefoil complement for gluing.

\subsection{$\widehat{\textit{HF}}\bf{(\textit{M})}$ and the pairing theorem}

In general, intersecting train tracks as previously described is not an invariant. This happens for instance if the decorated graph for $\widehat{\textit{CFD}}(M_i, \alpha_i, \beta_i)$ is not valence 2, so that extended type D structures and local systems need to be employed. This process culminates in an immersed curves invariant $\widehat{\textit{HF}}(M)$ (possibly decorated by local systems) that satisfies a full analogue of Theorem \ref{thm:boxpairing}, stated below in Theorem \ref{thm:ICpairing}. 

However in the examples we encounter, $\widehat{\textit{HF}}(M)$ is given by $A(\theta_M)$ and requires no extra decoration. This is the case when $M$ is \textit{loop type}, a class of manifold first introduced by Hanselman and Watson in \cite{HW15}. The twisted $I$-bundle over the Klein bottle $N$ is loop type, and the steps to prove the main theorem will ultimately force potential knot exteriors for gluing to be loop type as well. For this reason, we will not need to involve local systems.

\begin{theorem}[{\cite[Theorem 2]{HRW18}}] Consider the gluing $X = M_1 \cup_h M_2$, where the $M_i$ are compact, oriented 3-manifolds with torus boundary and $h: \partial M_2 \rightarrow \partial M_1$ is an orientation reversing homeomorphism for which $h(z_2) = z_1$. Then
\[
\widehat{\textit{HF}}(X) \cong \textit{HF}(\widehat{\textit{HF}}(M_1), h(\widehat{\textit{HF}}(M_2))),
\]
where intersection Floer homology is computed in $T_{M_1}$ and the isomorphism is one of relatively graded vector spaces that respects the $\text{Spin}^{c}$ decomposition. 
\label{thm:ICpairing}
\end{theorem}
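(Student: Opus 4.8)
The plan is to deduce the immersed-curve pairing statement from the already-established algebraic pairing theorem (Theorem~\ref{thm:boxpairing}) together with the prototype train-track pairing of Hanselman--Rasmussen--Watson (\cite[Theorem 16]{HRW16}), by showing that for a manifold $M$ with torus boundary the train track $A(\theta_M)$ realizing $\widehat{\textit{CFD}}(M,\alpha,\beta)$ can be straightened to an honest immersed multicurve (with local system) $\gamma_M \subset T_M$ without changing any relevant Floer-theoretic count. Concretely: (i) reduce $\widehat{\textit{CFD}}(M)$ to a standard ``loop'' form; (ii) read off $\gamma_M$ from this form and exhibit a homotopy through train tracks from $A(\theta_M)$ to $\gamma_M$; (iii) combine the box-tensor pairing, the train-track pairing, and invariance of Lagrangian intersection Floer homology under regular homotopy.

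First I would address the structural input. Using that $\mathcal{A}(\mathbb{T})$ is the torus algebra with the relations $\rho_2\rho_1=\rho_3\rho_2=0$, I would put $\widehat{\textit{CFD}}(M)$ into reduced form (no differentials within an idempotent block), and then run a cancellation/normalization argument on the resulting decorated graph to show that, up to homotopy equivalence of type D structures and after allowing local coefficient systems, the graph is a disjoint union of ``standard pieces'', each of which is the type D structure attached to an immersed curve segment of the six kinds pictured in Figure~\ref{fig:ReebChords}. This is essentially the classification of type D structures over the torus algebra; the key mechanism is tracking how the vanishing products $\rho_2\rho_1,\rho_3\rho_2$ constrain the admissible edge labels, together with an induction on the number of generators. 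From the reduced graph one builds $\gamma_M$ in $T_M$ by joining the embedded $\iota_0$-vertices on $\alpha$ and $\iota_1$-vertices on $\beta$ according to the edge data and smoothing the corners of the train track; a local system records any remaining bundle data. One then checks that $A(\theta_M)$ and $\gamma_M$ are homotopic through train tracks in $T_M$ rel the marked point $z$, hence have the same intersection number with any fixed curve and the same count of bigons.

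Next I would run the pairing. By Theorem~\ref{thm:boxpairing}, $\widehat{\textit{HF}}(X)\cong H_{\ast}(\widehat{CFA}(M_1,\alpha_1,\beta_1)\boxtimes\widehat{\textit{CFD}}(M_2,h^{-1}(\beta_1),h^{-1}(\alpha_1)))$. By \cite[Theorem 16]{HRW16} the right-hand side is computed by the train-track complex $\mathcal{C}(A(\theta_{M_1}),D(\theta_{M_2}),d^{\theta})$ in $T_{M_1}$, where $D(\theta_{M_2})$ is the anti-diagonal reflection of $A(\theta_{M_2})$ positioned by $h$; here the hypothesis $h(z_2)=z_1$ is used so that the two punctured tori and their marked points are identified compatibly. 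By step (ii), replacing the train tracks with the pulled-tight, minimally intersecting immersed multicurves $\gamma_{M_1}$ and $h(\gamma_{M_2})$ does not change the homology, since the chain homotopy type of a train-track/curve intersection complex in a surface is invariant under regular homotopy of its inputs (local systems being handled by the twisted version of Lagrangian Floer homology); thus $H_{\ast}(\mathcal{C}(A(\theta_{M_1}),D(\theta_{M_2}),d^{\theta}))\cong \textit{HF}(\gamma_{M_1},h(\gamma_{M_2}))=\textit{HF}(\widehat{\textit{HF}}(M_1),h(\widehat{\textit{HF}}(M_2)))$. Finally I would verify that every identification respects the $\text{Spin}^{c}$ splitting, which on the curve side corresponds to the decomposition of $\gamma_M$ by $H_1$-class / the $\iota$-grading, and the relative $\Z$-grading, using the grading data carried along in Theorem~\ref{thm:boxpairing} and the relative Maslov grading that intersection points inherit in $T_{M_1}$.

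The main obstacle is step (i)--(ii): proving that every $\widehat{\textit{CFD}}(M)$ with torus boundary is loop-type up to local systems and that the straightening to $\gamma_M$ is canonical up to homotopy. This is the genuinely hard content of \cite{HRW16,HRW18}: it requires the full classification of homotopy-reduced type D structures over $\mathcal{A}(\mathbb{T})$, a careful definition of the local-system decoration, and a proof that the resulting immersed curve is a homeomorphism invariant of the bordered manifold independent of the choices made in the reduction. By contrast, feeding this through the box-tensor pairing and translating bigon counts into Lagrangian Floer homology is comparatively formal, modulo checking gradings and the boundedness/admissibility hypotheses that make the relevant counts finite.
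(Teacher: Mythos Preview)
The paper does not prove this statement at all: Theorem~\ref{thm:ICpairing} is quoted directly from \cite[Theorem~2]{HRW18} and is used as a black box throughout. There is therefore no ``paper's own proof'' to compare your proposal against.

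That said, your outline is a reasonable high-level sketch of the strategy actually carried out in \cite{HRW16,HRW18}: reduce $\widehat{\textit{CFD}}(M)$ to a normal form classified by immersed curves with local systems, then use the prototype train-track pairing and invariance of Lagrangian Floer homology under homotopy. You correctly identify that the hard content is the structure theorem for type D modules over $\mathcal{A}(\mathbb{T})$ and the well-definedness of the curve invariant, and that the rest is comparatively formal. One caveat: your step (iii) glosses over the fact that the prototype pairing of \cite[Theorem~16]{HRW16} requires boundedness hypotheses on one side, and extending this to arbitrary immersed curves (possibly with nontrivial local systems, possibly with immersed bigons) requires additional work beyond ``regular homotopy invariance''; this is handled in \cite{HRW16} but is not as automatic as you suggest.
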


Let $\pi: \text{Spin}^{c}(X) \rightarrow \text{Spin}^{c}(M_1) \times \text{Spin}^{c}(M_2)$ be the projection. Here respecting $\text{Spin}^{c}$ decomposition means that $\widehat{\textit{HF}}(X, \mathfrak{t})$ is a summand of $\textit{HF}(\widehat{\textit{HF}}(M_1, \s_1), h(\widehat{\textit{HF}}(M_2, \s_2)))$ only for $\mathfrak{t} \in \pi^{-1}(\s_1 \times \s_2)$. Pairing takes place in $T_{M_1}$, which can be cramped if the invariants $\widehat{\textit{HF}}(M_i)$ are sufficiently involved. We will almost always find it more convenient to carry out the intersection Floer homology in specific covers of $T_{M_1}$.

\begin{definition}[{\cite{HRW18}}]
Let $\overline{T}_M$ be the cover of $T_M$ associated with the kernel of the composition $\pi_1(T_M) \rightarrow \pi_1(\partial M) \rightarrow H_1(\partial M) \rightarrow H_1(M)$, and $p: \overline{T}_M \rightarrow T_M$ the projection. Further, let $\widetilde{T}$ denote the cover $\R^2 / (\frac{1}{2} + \Z)^2$.
\end{definition}

The lifts of $\widehat{\textit{HF}}(M)$ to these covers are useful for identifying properties that the invariant encodes, in addition to simplifying homology computations. These lifts decompose as
\[
\widehat{\textit{HF}}(M) \cong \bigoplus_{\s \in \text{Spin}^{c}(M)} p(\widehat{\textit{HF}}(M, \s)),
\]
where $\widehat{\textit{HF}}(M, \s)$ denotes the lift to $\overline{T}_{M, \s}$ of the part of $\widehat{\textit{HF}}(M)$ associated to $\s$ \cite[Theorem 7]{HRW16}. This lift $\widehat{\textit{HF}}(M, \s)$ is well-defined up to action by the deck group of $p$, and will typically be taken to be centered about the origin in $\overline{T}_{M, \s}$. When $H_1(M) \cong \Z$, which is encountered in the case of $S^3$ knot complements, the cover $\overline{T}_M$ may be identified with the infinite cylinder $S^1 \times (\R \setminus (\frac{1}{2} + \Z))$, with lifts $\overline{z}_i$ of the basepoint at coordinates $(0, i+\frac{1}{2})$. As an example, Theorem \ref{thm:ICpairing} applied to Dehn surgery shows the convenience of working with immersed curves. In this setting, we have 
\[
\widehat{\textit{HF}}(S^3_r(K)) \cong \textit{HF}(\widehat{\textit{HF}}(S^3 \setminus \nu J), h(\widehat{\textit{HF}}(D^2 \times S^1))),
\]
where $h$ is the slope $r$ surgery map. Figure \ref{fig:4surgeryT25example} shows $\widehat{\textit{HF}}(S^3_4(T(2,5)))$, where the four $\text{spin}^{c}$ structures are in correspondence with the four lifts of $h(\widehat{\textit{HF}}(D^2 \times S^1))$ required to lift all intersections to $\overline{T}_M$. It is immediate that this manifold is an L-space when lifting to $\overline{T}_M$, showing the advantage of working in the cylindrical cover.

\begin{figure}[!h]
\centering
\includegraphics[scale=1]{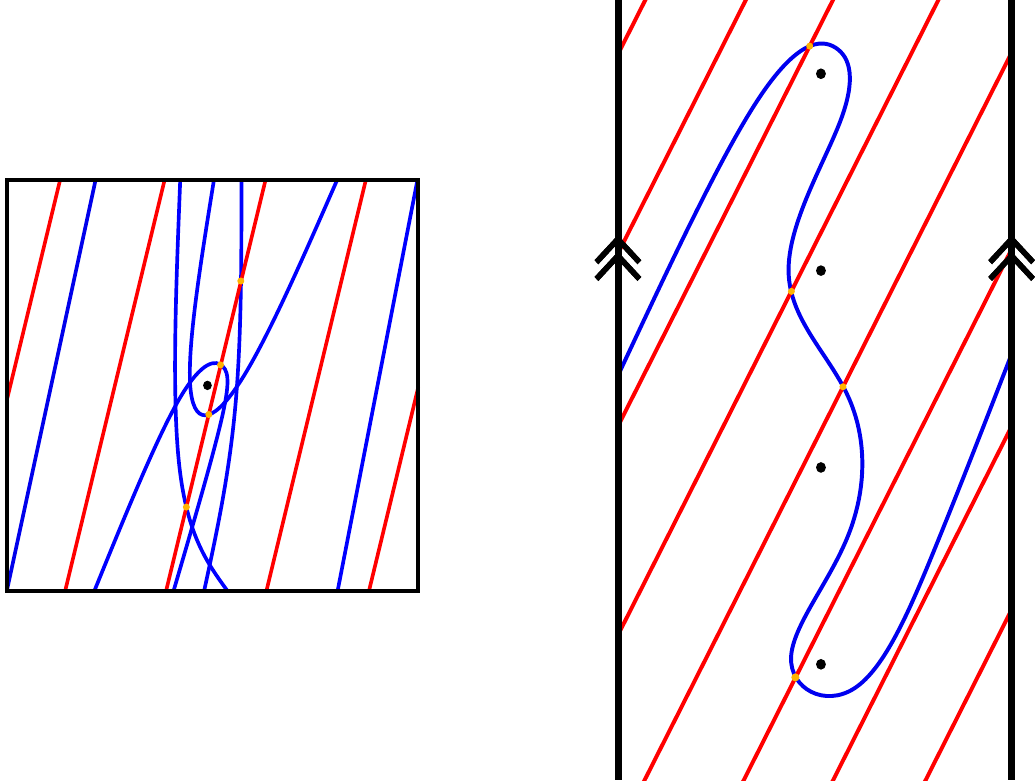}
\caption{The pairing of $\widehat{\textit{HF}}(S^3 \setminus \nu T(2,5))$ and $h(\widehat{\textit{HF}}(D^2 \times S^1))$ that computes $\widehat{\textit{HF}}(S^3_4(T(2,5))$.} 
\label{fig:4surgeryT25example}
\end{figure}

\subsection{Structure and properties of $\widehat{\textit{HF}}(M)$}

The immersed curves invariant $\widehat{\textit{HF}}(M)$ encodes many algebraic properties as geometric symmetries. For example, orientation reversal yields $\widehat{\textit{HF}}(-M) = \widehat{\textit{HF}}(M)$, but the boundary parameterization changes. Seen in $T_M$, the invariant $\widehat{\textit{HF}}(-M)$ is obtained by reflecting about the rational longitude. The invariant also carries information about Turaev torsion and the Thurston norm for knot complements \cite{HRW18}.

A structural property of immersed curves is their invariance (as unlabelled curves) under the action by the elliptic involution, with $z$ fixed, of $\partial M \setminus z$ \cite[Theorem 7]{HRW18}. More concretely, Hanselman, Rasmussen, and Watson show that $\widehat{\textit{CFD}}(M, c(\s)) \cong \mathbf{E} \boxtimes \widehat{\textit{CFD}}(M, \s)$, meaning that $\text{Spin}^{c}$ conjugation on the level of bordered invariants achieves the same resulting curve invariant as what would arise from the box tensor product with a particular type DA structure associated with elliptic involution. This feature of $\widehat{\textit{HF}}(M)$ can be seen when a rotation of $\pi$ about the origin is applied to our chosen lift $\widehat{\textit{HF}}(M, \s)$ in $\overline{T}_M$. 

We may also arrange the curves of $\widehat{\textit{HF}}(M)$ to ensure that the intersection Floer homology of pairings is minimal. 

\begin{definition}
Fix a metric on the torus $T_M$. We say $\widehat{\textit{HF}}(M)$ is in pegboard form if the immersed curves are homotoped to have minimal length in $T_M$, where the curves remain outside an $\epsilon$-ball of $z$.
\label{def:pegboard}
\end{definition}

The result is a pegboard representative for $\widehat{\textit{HF}}(M)$, and we may lift these to both $\overline{T}_M$ and $\widetilde{T}$, where each lift of $z$ has an $\epsilon$-ball disjoint from the lift(s) of $\widehat{\textit{HF}}(M)$. Pegboard forms are invaluable for pairing, since pulling curves tight homotopes away pseudo-holomorphic disks that do not contribute to the intersection Floer homology of a pairing. This ensures that the resulting Floer homology is minimal \cite[Lemma 47]{HRW16}. 

\begin{figure}[!h]
\centering
\def\svgscale{0.7}
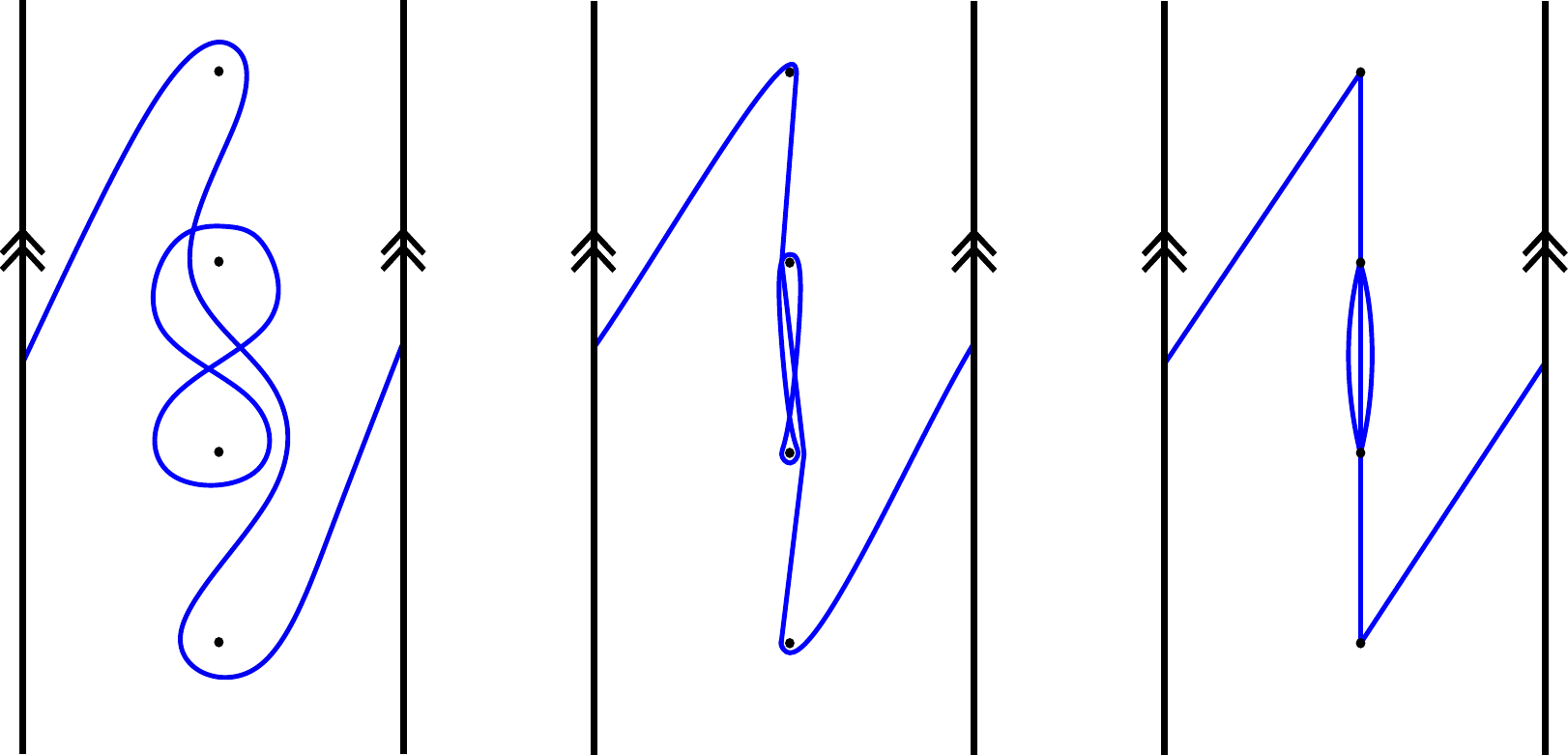
\caption{Pulling the immersed curve invariant $\widehat{\textit{HF}}(S^3 \setminus (T(2,3) \# T(2,3)))$ (without grading arrows) tight to pegboard form. The figures from left to right show the stages of homotoping the invariant to lie within a neighborhood of the lifts of the meridian $\mu$.} 
\label{fig:SumRHTGod}
\end{figure}

If $H_1(M) \cong \Z$ so that $\overline{T}_M \cong S^1 \times (\R \setminus \frac{1}{2} \Z)$, define $n_i$ to be the number of vertical segments of the pegboard representative of $\widehat{\textit{HF}}(M, \s)$ that are parallel to $\overline{\mu}_i$, the lift of $\mu$ at height $i$. When $M = S^3 \setminus \nu J$, the lift $\widehat{\textit{HF}}(M, \s)$ satisfies a conveniently simple form. The lift $\widehat{\textit{HF}}(M, \s)$ consists of inessential curves, or curves that are null-homotopic after allowing homotopies through the basepoints, and a single essential curve $\overline{\gamma}$ that is homotopic to the homological longitude when allowing homotopies through the basepoints \cite[Corollary 63]{HRW18}. 

The pegboard representative of $\widehat{\textit{HF}}(M, \s)$ also encodes some numerical and concordance invariants of $J$. For example, the genus is given by height (rounded up to the nearest integer) of the tallest $\overline{z}_s$ around which the pegboard representative of $\widehat{\textit{HF}}(M, \s)$ wraps in $\overline{T}_M$. Additionally, the height (rounded to the largest integer) of the first $\overline{z}_s$ around which $\overline{\gamma}$ wraps is precisely the Oszv\'ath-Szab\'o invariant $\tau(J)$. Hom's $\epsilon$ invariant may also be determined from $\widehat{\textit{HF}}(M, \s)$ by observing $\overline{\gamma}$ after it wraps around this basepoint: the curve turns downwards, upwards, or continues straight corresponding to $\epsilon(J)$ being 1, -1, and 0, respectively. Notice that the essential curve $\overline{\gamma}$ only continues straight if $\tau(J) = 0$. These two invariants determine the slope of the curve $\overline{\gamma}$, given by $2\tau(J) - \epsilon(J)$.

The invariance of $\widehat{\textit{HF}}(M)$ under the action of the hyperelliptic involution implies that $n_{-i} = n_i$ for its pegboard representative. It will be particularly useful to characterize those knots whose complements have curve invariants with minimal $n_i$ for all $i \in \Z$. Recall that a knot $J$ is an \textit{L-space knot} if it admits an L-space surgery, and that it also satisfies $g(J) = |\tau(J)|$. The following lemma is surely known to experts of the field, but is included here for completeness and to serve as an example of the types of computations with immersed curves to come.

\begin{lemma}
Let $M = S^3 \setminus \nu J$ with $J$ non-trivial. Then $J$ is an $L$-space knot if and only if $\widehat{\textit{HF}}(M, \s)$ pulls tight to a curve with $n_i = 1$ for $|i| < g(J)$ and $n_i = 0$ otherwise.
\label{lem:lspaceJ}
\end{lemma}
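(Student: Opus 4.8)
The plan is to reduce to the numerical criterion \emph{$J$ is an L-space knot if and only if $\dim\widehat{A}_s=1$ for every $s\in\Z$}. One implication is immediate from Theorem~\ref{thm:largesurgery}: if every $\widehat{A}_s$ is one-dimensional, then for $p\gg 0$ and $|s|\le p/2$ we get $\dim\widehat{\textit{HF}}(S^3_p(J),[s])=1$, so $S^3_p(J)$ is an L-space and $J$ is an L-space knot. Conversely, an L-space knot admits an L-space surgery $S^3_p(J)$ for some large $p$; since $\widehat{A}_s\cong\F$ whenever $|s|\ge g(J)$ (as used in the proof of Proposition~\ref{prop:HF5of8}), Theorem~\ref{thm:largesurgery} then forces $\dim\widehat{A}_s=1$ for the remaining finitely many $s$ as well. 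Thus it suffices to match the prescribed $n_i$-profile of the pegboard representative of $\widehat{\textit{HF}}(M,\s)$ with the condition $\dim\widehat{A}_s=1$ for all $s$.

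For the ``only if'' direction, recall the classical fact (Ozsv\'ath--Szab\'o) that an L-space knot has $CFK^\infty(J)$ filtered chain homotopy equivalent to a staircase complex. Through the Hanselman--Rasmussen--Watson dictionary, such a complex has no acyclic $2\times 2$ box summands, so $\widehat{\textit{HF}}(M,\s)$ carries a trivial local system and no inessential components: it is the single essential curve $\overline\gamma$, homotopic through the basepoints to the longitude, of \cite[Corollary~63]{HRW18}. Pulling $\overline\gamma$ tight to pegboard form and invoking the genus-detection property together with the hyperelliptic symmetry $n_{-i}=n_i$ recalled above, $\overline\gamma$ reaches as high as the basepoint $\overline z_{g(J)-1}$ and symmetrically as low as $\overline z_{-(g(J)-1)}$, contributing exactly one vertical segment parallel to $\overline\mu_i$ for each $|i|<g(J)$ and none otherwise. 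Hence $n_i=1$ for $|i|<g(J)$ and $n_i=0$ otherwise.

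For the ``if'' direction, suppose the pegboard representative has $n_i=1$ for $|i|<g(J)$ and $n_i=0$ otherwise. First there can be no inessential component: pulled tight, such a component is a small loop about some $\overline z_s$ contributing vertical segments to the bands adjacent to $\overline z_s$, which would push some $n_i$ above $1$ for $|i|<g(J)$ or above $0$ for $|i|\ge g(J)$---both excluded. So $\widehat{\textit{HF}}(M,\s)$ is a single essential curve; being homotopic through basepoints to $\lambda$, symmetric under the hyperelliptic involution, and having exactly one vertical strand in each band with $|i|<g(J)$ and none elsewhere, it cannot back-track within a band, and is therefore a single essential curve of staircase type. Computing $\widehat{A}_s\cong\widehat{\textit{HF}}(S^3_p(J),[s])$ for $p\gg 0$ by pairing this curve with a large-slope line as in Theorem~\ref{thm:ICpairing}---and using that pegboard representatives realize minimal intersection numbers, \cite[Lemma~47]{HRW16}---one reads off $\dim\widehat{A}_s=1$ for every $s$, and hence $J$ is an L-space knot by the criterion above.

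The step I expect to be the main obstacle is making precise the two geometric claims used in the ``if'' direction: that the prescribed vertical-segment profile genuinely excludes any ``back-tracking'' of $\overline\gamma$ (a basepoint wrapped twice) as well as any inessential component, and that the local shape of $\overline\gamma$ near each $\overline z_s$ then forces $\dim\widehat{A}_s=1$. This is exactly where the precise Hanselman--Rasmussen--Watson correspondence between the immersed curve near a basepoint and the staircase-versus-box structure of a reduced model of $CFK^\infty(J)$ must be invoked; once that local analysis is pinned down, both implications of the lemma follow by inspection in the cylinder $\overline T_M$.
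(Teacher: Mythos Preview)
Your proposal is correct and both directions land in the same place as the paper's, but the ``only if'' direction takes a different route. You invoke the Ozsv\'ath--Szab\'o staircase characterization of $CFK^\infty$ for L-space knots and then translate through the HRW dictionary to the immersed curve. The paper instead stays entirely within the immersed-curves pairing: it picks an integer $k>2g(J)-1$ so that $S^3_k(J)$ is an L-space, pairs $\widehat{\textit{HF}}(M)$ with the $k$ lifts of the slope-$k$ line, and observes that any inessential component (or, implicitly, any back-tracking of $\overline\gamma$) would force some lift to meet $\widehat{\textit{HF}}(M)$ in more than one point, contradicting $\dim\widehat{\textit{HF}}(S^3_k(J),[s])=1$. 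This buys the paper a self-contained argument that avoids importing the staircase theorem; your route is shorter if one is willing to quote that theorem, and has the side benefit of making the staircase shape of $\overline\gamma$ explicit rather than leaving it implicit.

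For the ``if'' direction the two arguments coincide. The paper, like you, notes that $n_i\le 1$ leaves no room for inessential components and then pairs with a slope-$k$ line for $k>2g(J)-1$ to conclude $S^3_k(J)$ is an L-space. The ``back-tracking'' concern you flag is exactly the point the paper glosses over when it asserts $\overline\gamma$ has slope $2g(J)-1$: any back-track of the essential curve at height $i$ would contribute at least three vertical strands to $n_i$, violating $n_i=1$, so the profile already forces monotonicity. You are right that this is the one step requiring care, but it is elementary once stated---no deeper HRW correspondence is needed beyond the fact that a connected essential curve with exactly one vertical strand per band must be monotone.
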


\begin{proof}
When $J$ is a genus $g$ knot with an $L$-space surgery $S^3_p(J)$, by mirroring if necessary we may take $p$ to be positive. A surgery exact triangle argument shows that $S^3_{p+1}(J)$ is an $L$-space, and likewise for $S^3_k(J)$ with integral $k > p$. For some $k > 2g-1$, Theorem \ref{thm:largesurgery} then additionally provides that 
\[
\widehat{\textit{HF}}(S^3_k(J), [s]) \cong \widehat{A}_s
\]
for all $s \in \Z$. Then each $\widehat{A}_s \cong \F$ since $S^3_k(J)$ is an L-space. We can view $S^3_k(J)$ as the $+k$-sloped gluing of $D^2 \times S^1$ to $M$, so that Theorem \ref{thm:ICpairing} guarantees
\[
\widehat{\textit{HF}}(S^3_k(J)) \cong \textit{HF}(\widehat{\textit{HF}}(M), h(\widehat{\textit{HF}}(D^2 \times S^1))).
\]

Analogous to the $S^3_4(T(2,5))$ example, precisely $k$ lifts of the $+k$-sloped curve $\widehat{\textit{HF}}(D^2 \times S^1)$ are required to lift all intersections in $T_M$ to $\overline{T}_M$, and each lift is in correspondence to precisely one $\text{spin}^{c}$ structure of $\text{Spin}^{c}(S^3_k(J))$. These differ in height by one in $\overline{T}_M$, and each lift must intersect the essential curve $\overline{\gamma}$ at least once. An inessential curve component contributes an even number of vertical segments to some $n_i$ with $|i| < g(J)$. If the pegboard representative of $\widehat{\textit{HF}}(M)$ contains such a component, then dim $\widehat{\textit{HF}}(S^3_k(J),[s]) > 1$ for some $\text{spin}^{c}$ structure $[s]$ since $k > 2g(J)-1$ (we are guaranteed that some lift of $h(\widehat{\textit{HF}}(D^2 \times S^1)$ crosses $\overline{\mu}_i$). As $S^3_k(J)$ is an $L$-space, we must have not have any inessential curve components. Then $\overline{\gamma}$ is the only component of $\widehat{\textit{HF}}(S^3 \setminus \nu J)$, and so $n_i = 1$ for $|i| < g(J)$ and $n_i = 0$ otherwise.

If $\widehat{\textit{HF}}(M, \s)$ has a pegboard representative satisfying $n_i = 1$ for $|i| < g(J)$ and $n_i = 0$ otherwise, then by mirroring if necessary we may suppose $\tau(J) > 0$ since $J$ is non-trivial. The invariant $\widehat{\textit{HF}}(M, \s)$ is just $\overline{\gamma}$ since no $n_i$ has room to admit an inessential component. Further, $\overline{\gamma}$ has slope $2\tau(J) - \epsilon(J) = 2g(J)-1$ and pulls tight to vertical segments parallel to $\overline{\mu_i}$ for $|i| < g(J)$. If $h$ is a gluing with slope $k > 2g(J)-1$, then each lift of $h(\widehat{\textit{HF}}(D^2 \times S^1))$ intersects $\widehat{\textit{HF}}(M, \s)$ at most once. Using Theorem \ref{thm:ICpairing}, we have dim $\widehat{\textit{HF}}(S^3_k(J), [s]) = 1$ for each $\text{spin}^{c}$ structure $[s]$. Therefore $S^3_k(J)$ is an $L$-space, and so $J$ is an $L$-space knot.
\end{proof}

\noindent\textit{Remark.}
For an $L$-space knot $J$, we have $|\tau(J)| = g(J)$ and so the pegboard representative of $\widehat{\textit{HF}}(M)$ takes on one of two mirrored forms depending on the sign of $\tau(J)$. These are illustrated in Figure \ref{fig:Lspaceknot} for a genus two knot.

\begin{figure}[!ht]
\centering
\def\svgscale{0.8}
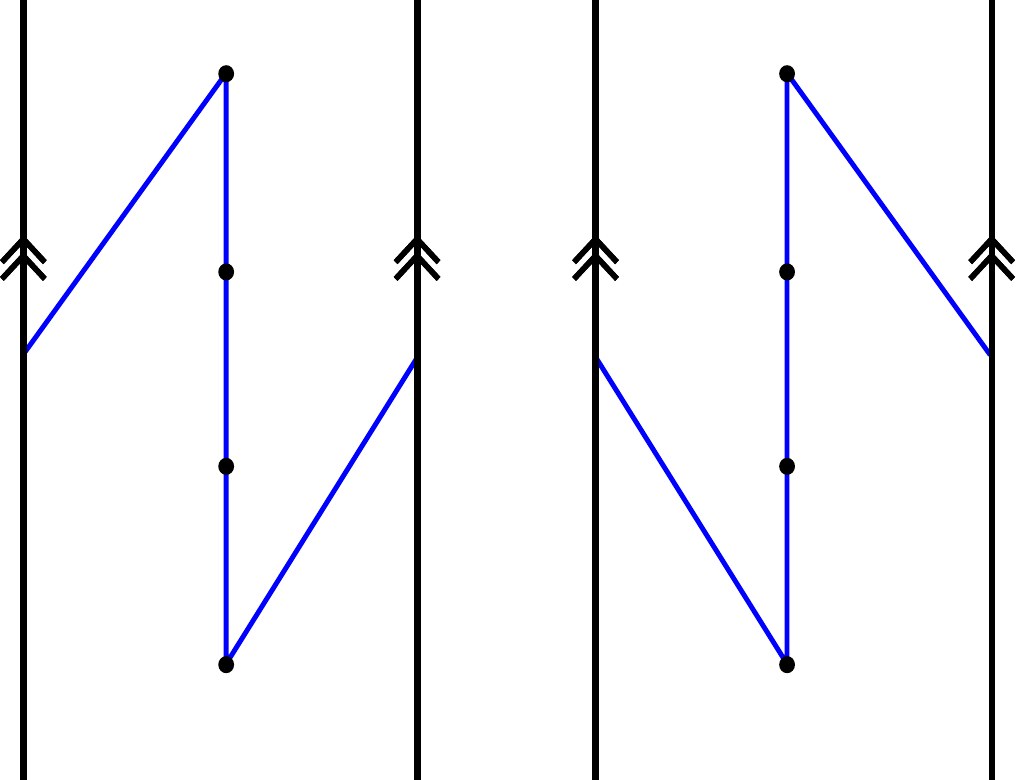
\caption{The two curve invariants for a genus two L-space knot, placed in pegboard form. Left: $\tau(J)=g(J)$. Right: $\tau(J)=-g(J)$.}
\vspace{-1.5\intextsep}
\label{fig:Lspaceknot}
\end{figure}

\section{Pairings and immersed curves}
\label{sec:proof}

As alluded to in the introduction, if a closed, orientable $3$-manifold $X$ contains a Klein bottle, then we may view $X$ as a gluing $X = M \cup_h N$ of a rational homology solid torus $M$ and the twisted $I$-bundle over the Klein bottle $N$. Alternatively we can view $M = Y \setminus \nu J$ as a knot manifold, which is the complement of a knot $J$ in $Y$ some rational homology sphere. We specialize to gluings of $N$ to $S^3$ knot complements, and use the immersed curves machinery to determine which knot complements can glue with $N$ to have the Floer homology as that of $S^3_8(K)$ with $g(K)=2$. The proof of Theorem \ref{thm:main} involves establishing three main lemmas that imply the following:

\begin{itemize}
\setlength\itemsep{-0.8em}
\item If $J$ is the unknot, then $X = S^3_8(T(2,5))$. \\
\item If $J$ is non-trivial, then dim $\widehat{\textit{HF}}(X)$ forces $J = T(2,3)$. \\
\item If $J = T(2,3)$, then $X$ does not arise as $S^3_8(K)$ with $g(K)=2$.
\end{itemize}

The first lemma does not require immersed curves, but does use Maslov grading information for $\widehat{\textit{HF}}$ via the $d$-invariants. The second lemma is where immersed curves are used to constrain $J$. Finally the third lemma, in Section \ref{sec:gr}, uses the relative $\Q$-grading on the box tensor product of bordered invariants for $S^3 \setminus \nu T(2,3)$ and $N$.

\subsection{Pairings}
\label{sub:pairings}

Let us now pin down the possibilities for $h$. Summands of homology will typically be ordered with the summand generated by the rational longitude first, such as in $H_1(\partial N) \cong \Z_{[\phi_0]} \oplus \Z_{[\phi_1]}$.
 
\begin{definition}
Let $X = (S^3 \setminus \nu J) \, \cup_h N$ be the gluing of $N$ to the complement $S^3 \setminus \nu J$, where the orientation-reversing gluing induces $h_{\ast}$ on homology given by
\begin{equation*}
[h_{\ast}] = 
\begin{pmatrix}
q & r \\
p & s \\
\end{pmatrix}.
\end{equation*}
We say $h$ is a slope $p/q$ gluing, corresponding to the slope of $h_{\ast}(\phi_0)$.
\label{def:X}
\end{definition}

\begin{proposition}
Let $h$ be defined as above. Then $|H_1(X)|=8$ if only if $|p|=2$. Additionally, we have
\begin{align*}
H_1(X) = \left\{
	\begin{array}{l c}
	\Z/2\Z \oplus \Z/4\Z & s \equiv 0 \,\, (\text{mod} \,\, 2) \\
	\Z/8\Z & s \not\equiv 0 \,\, (\text{mod} \,\, 2)
	\end{array}
	\right.
\end{align*}
\label{prop:pairhomology}
\end{proposition}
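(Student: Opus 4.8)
The plan is to compute $H_1(X)$ directly from a Mayer–Vietoris presentation of the gluing $X = (S^3 \setminus \nu J) \cup_h N$. First I would record the homology of the two pieces: since $H_1(S^3 \setminus \nu J) \cong \Z$ generated by the meridian $\mu_J$, with the Seifert longitude $\lambda_J$ null-homologous, and since $H_1(N) \cong \Z \oplus \Z/2\Z$ with the rational longitude $\phi_0$ generating the $\Z$ summand and $\phi_1$ mapping to twice a generator (as recalled in Subsection \ref{subsec:IC}). The Mayer–Vietoris sequence for $X = (S^3\setminus\nu J)\cup N$ glued along $T^2 = \partial N$ gives
\[
H_1(T^2) \xrightarrow{(i_*, -j_*)} H_1(S^3\setminus\nu J)\oplus H_1(N) \to H_1(X) \to \widetilde H_0(T^2) = 0,
\]
so $H_1(X)$ is the cokernel of the first map. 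Choosing the basis $(\phi_0,\phi_1)$ for $H_1(\partial N)=H_1(T^2)$ and using the matrix $[h_*] = \begin{pmatrix} q & r \\ p & s\end{pmatrix}$ to express the images in $H_1(\partial(S^3\setminus\nu J)) = \Z\langle\mu_J\rangle \oplus \Z\langle\lambda_J\rangle$ (where $\lambda_J \mapsto 0$), I get that $\phi_0 \mapsto (q\mu_J,\ \phi_0)$ and $\phi_1 \mapsto (r\mu_J,\ \phi_1)$ in $\Z\langle\mu_J\rangle \oplus H_1(N)$. Here I must be careful that $\phi_0$ generates the free part of $H_1(N)$ while $\phi_1 = p\,\phi_0 + (\text{2-torsion})$; concretely, using that $p[\phi_0] = [\phi_1]$ in... — rather, using the structure $H_1(N)\cong\Z_{[\phi_0]}\oplus\Z/2\Z$ with $[\phi_1] = s'[\phi_0] + t'$ where the precise relation comes from the fact (cited in the excerpt) that $\phi_0$ is the rational longitude and $\phi_1$ is twice primitive.

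The main computational step is to reduce the resulting $2\times 3$ (or, after absorbing the free $\Z$ of $H_1(S^3\setminus\nu J)$, appropriately sized) presentation matrix to Smith normal form. After eliminating the $\Z$ summand of $H_1(N)$ against one relation, I expect to be left with a presentation of the form $\Z^2 / \langle (p, *),\ (\text{something involving } s) \rangle$ together with the ambient $\Z/2\Z$; chasing through, the order of $H_1(X)$ should come out to $|2p|$ up to sign. Wait — I would double check the normalization: the proposition asserts $|H_1(X)| = 8 \iff |p| = 2$, i.e. $|H_1(X)| = 4|p|$; this factor of $4$ (rather than $2$) must come from the interplay between the $\Z/2\Z$ in $H_1(N)$ and the slope data, or from the rational longitude $\phi_0$ having denominator $2$ when measured against a primitive curve. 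So concretely I would set up the presentation matrix with rows the images of $\phi_0, \phi_1$ in the basis $(\mu_J) \oplus (\text{generator of } \Z \subset H_1(N)) \oplus (\text{generator of } \Z/2\Z)$, namely
\[
\begin{pmatrix} q & 1 & 0 \\ r & 0 & 1 \end{pmatrix}
\quad\text{together with the relation } \begin{pmatrix} 0 & 0 & 2\end{pmatrix} \text{ from } H_1(N),
\]
except that the column recording $\phi_1$'s class must encode $\phi_1 = p\phi_0 + (\text{torsion})$ correctly; after clearing the first column using $\gcd$ relations and the fact that $\det[h_*] = \pm 1$ (since $h$ is a homeomorphism, $qs - rp = \pm 1$), the determinant of the torsion block works out to $\pm 2p$, and with the extra $\Z/2\Z$ one gets order $4|p|$.

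Finally, to pin down the group structure (and not merely its order), I would examine the $2$-primary part via Smith normal form. The parity condition on $s$ governs whether the relation coming from $\phi_1$ contributes an even or odd entry in the torsion block: when $s$ is even, $\det[h_*] = qs - rp = \pm 1$ forces $rp$ odd hence $p$ odd — but $|p|=2$ contradicts this, so in fact on the locus $|p|=2$ we always have $s$ odd; conversely the stated dichotomy must be read for general $p$. Tracking the elementary divisors: when the presentation matrix's $2\times 2$ torsion minor has a $2$ that can be split off as a separate elementary divisor we land in $\Z/2 \oplus \Z/4$ (for $|p|=2$), otherwise everything collapses into a single cyclic factor $\Z/8$. \textbf{The main obstacle} I anticipate is bookkeeping the embedding $H_1(\partial N) \to H_1(N)$ correctly — in particular getting the factor-of-$2$ normalization of $\phi_0$ versus $\phi_1$ right and ensuring the $\Z/2\Z$ summand of $H_1(N)$ is threaded correctly through Mayer–Vietoris — since an off-by-a-factor-of-two error there propagates to the wrong order for $H_1(X)$. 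Once the presentation matrix is correctly assembled, reducing it to Smith normal form and reading off the two cases is routine linear algebra over $\Z$.
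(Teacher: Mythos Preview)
Your approach via Mayer--Vietoris is exactly the paper's, but you have the inclusion $H_1(\partial N)\to H_1(N)$ backwards, and this is precisely the obstacle you flagged. The rational longitude $\phi_0$ is by definition the slope that becomes torsion in $H_1(N)$: it has order $2$ there. It is $\phi_1$ that hits the free part, as twice a primitive element $x$, so $H_1(N)\cong (\Z/2\Z)_{[\phi_0]}\oplus \Z_{[x]}$ with $\phi_1\mapsto 2x$. You wrote the opposite (``$\phi_0$ generating the $\Z$ summand''), and your tentative relation ``$\phi_1 = p\,\phi_0 + (\text{torsion})$'' conflates an intrinsic-to-$N$ fact with the gluing parameter $p$. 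Separately, with the paper's convention (rational-longitude coordinate listed first) the matrix $[h_*]$ sends $\phi_0\mapsto q\lambda+p\mu$ and $\phi_1\mapsto r\lambda+s\mu$; after $\lambda\mapsto 0$ in $H_1(S^3\setminus\nu J)$ this gives $\phi_0\mapsto p\mu$ and $\phi_1\mapsto s\mu$, not $q\mu$ and $r\mu$ as in your matrix. With both corrections the Mayer--Vietoris cokernel is
\[
H_1(X)\;\cong\;\langle \mu,\, x \mid 2p\mu=0,\ 2x=s\mu\rangle,
\]
which visibly has order $4|p|$ and is cyclic (generated by $x$) exactly when $s$ is odd. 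No further Smith-form work is needed.

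Your parity remark at the end is correct and worth recording: since $\det[h_*]=qs-rp=\pm 1$, taking $|p|=2$ forces $qs$ odd, hence $s$ odd; so the $s\equiv 0\pmod 2$ branch of the dichotomy is in fact vacuous on the locus $|H_1(X)|=8$.
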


\begin{proof}
Recall the parameterization on $\partial N$ by the rational longitude $\phi_0$ and our chosen dual curve $\phi_1$, so that $H_1(\partial N) \cong \Z_{[\phi_0]} \oplus \Z_{[\phi_1]}$. As a slight abuse of notation, let $\phi_0$ and $\phi_1$ also denote the inclusion of these slopes in $H_1(N)$. The dual curve $\phi_1$ includes in $H_1(N)$ as twice some primitive curve $x$ since $\phi_0$ includes with order two \cite[Subsection 3.1]{Wat12}, and so $H_1(N) \cong \Z/2\Z_{[\phi_0]} \oplus \Z_{[x]}$. For the knot complement, $H_1(\partial(S^3 \setminus \nu J)) \cong \Z_{[\lambda]} \oplus \Z_{[\mu]}$ and $H_1(S^3 \setminus \nu J) \cong \Z_{[\mu]}$, with the inclusions $[\mu]$ primitive and $[\lambda]$ trivial. Then
\begin{align*}
H_1(X) \cong \faktor{(H_1(S^3 \setminus \nu J) \oplus H_1(N))}{f_{\ast}(H_1(\partial N))},
\end{align*}
where $f_{\ast}$ maps $H_1(\partial N)$ into $H_1(N)$ by inclusion and into $H_1(S^3 \setminus \nu J)$ through $h_{\ast}$ and inclusion. 

The quotient identifies $\phi_0 \sim q\lambda + p\mu$ and $\phi_1 = 2x \sim r\lambda + s\mu$, and so $H_1(X)$ has the following presentation:
\begin{align*}
H_1(X) &\cong \langle \lambda, \mu, \phi_0, x \,\, \vert \,\, \lambda = 0, 2\phi_0 = 0, \phi_0 = q\lambda + p \mu, 2x = r\lambda + s\mu \rangle \\
&\cong \langle \mu, x \,\, \vert \,\, 2p\mu = 0, 2x = s\mu \rangle.
\end{align*}
From this we see that $|H_1(X)| = 8$ if and only if $|p|=2$, and that $H_1(X)$ is cyclic when $s \not\equiv 0 \,\, (\text{mod} \,\, 2)$.
\end{proof}
A gluing $h$ that satisfies the cyclic condition of Proposition \ref{prop:pairhomology} will be referred to as a \textit{cyclic gluing}.

\subsection{Dehn twisting invariance}
\label{subsec:dehntwist}

A \textit{Heegaard Floer homology solid torus} $M$ is a rational homology solid torus satisfying 
\[
\widehat{\textit{CFD}}(M, \mu_M, \lambda_M) \cong \widehat{\textit{CFD}}(M, \mu_M + \lambda_M, \lambda_M),
\]
with $\lambda_M$ the rational longitude of $M$ and $\mu_M$ any slope dual to $\lambda$. The twisted $I$-bundle over the Klein bottle is shown to be a Heegaard Floer homology solid torus in [BGW, Proposition 7], and this invariance for $A(\theta_{N, \s_1})$ is shown in Figure \ref{fig:N1twistinvariance}. The new yellow edge recovering the $\rho_3$ edge is the result of applying  edge reduction \cite[Section 2.6]{Lev14}. We remark that the case for the loose component $A(\theta_{N, \s_0})$ is immediate. Equivalently, inspection of $\widehat{\textit{HF}}(N)$ in Figure \ref{fig:N1and2Abase} reveals that the curve invariant can be homotoped (without crossing the basepoint) to lie within a neighborhood of the rational longitude.

\begin{figure}[!ht]
\centering
\includegraphics[scale=0.6]{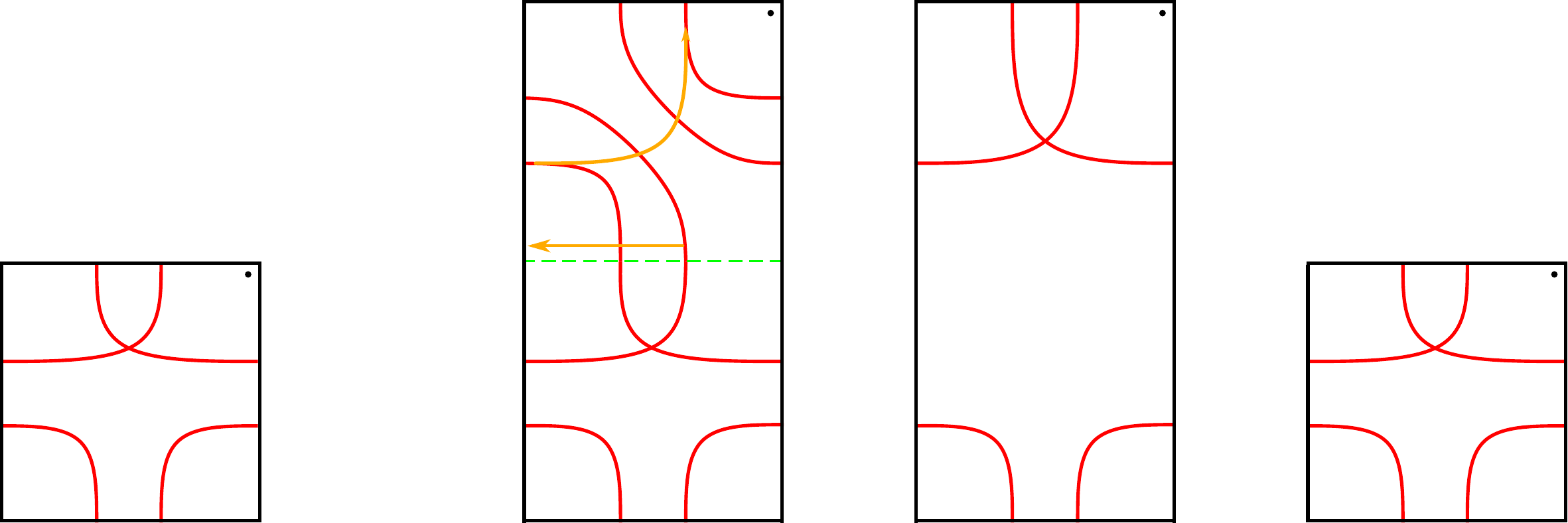}
\caption{Left: The type A realization of $\widehat{\textit{CFD}}(N, \phi_1, \phi_0, \s_1)$. Right: Dehn twisting to obtain the type A realization of $\widehat{\textit{CFD}}(N, \phi_1+\phi_0, \phi_0, \s_1)$, and the edge reduction showing homotopy equivalence.}
\label{fig:N1twistinvariance}
\end{figure}

Dehn twisting $n$ times along $\phi_1$, and then gluing is equivalent to pre-composing $[h_{\ast}]$ with
\[
[T_n] = \begin{pmatrix}
1 & n \\
0 & 1 \\
\end{pmatrix}, \,\, \text{yielding} \,\, 
[h_{\ast} \circ T_n] = \begin{pmatrix}
q & r+nq \\
p & s+np \\
\end{pmatrix}.
\]
Gluing by either map yields manifolds with equivalent ranks of $\widehat{\textit{HF}}(X, \mathfrak{t})$ for each $\text{spin}^{c}$ structure [HRW2, Corollary 27]. The mod p residue class of $[s]$ is preserved for all $n \in \Z$, and so we have to consider an integral family of manifolds $X$ obtained by varying $n$. This will be more pertinent in the next section where we appeal to more than just dim $\widehat{\textit{HF}}(X)$, so let us then restrict attention to the maps $h$ with $0 \leq s < 2$. We are also interested in cyclic gluings, which means $s \equiv 1 \,\, (\text{mod} \,\, 2)$ due to Proposition \ref{prop:pairhomology}. We will show that the pairing slope $\pm 2/q$ must be integral when $J$ is non-trivial to provide the right dim $\widehat{\textit{HF}}(X)$, so we will also take $q = 1$. The Dehn twisting invariance of $\widehat{\textit{CFD}}(N)$ allows us to choose $s = -1$, and so the prototypical gluing $h$ then induces
\[
[h_{\ast}] = \begin{pmatrix}
1 & 0 \\
2 & -1 \\
\end{pmatrix}.
\]

\subsection{Gluings with $J$ trivial}
\label{subsec:Jtrivial}

We now establish the first lemma handling the case when $J$ is trivial, which just involves Dehn fillings of $N$, before approaching the case when $J$ is non-trivial. Continuing as before, let $M$ will denote the knot complement $M = S^3 \setminus \nu J$.

\begin{lemma}
Suppose $X = (Y \setminus \nu J) \cup_h N$ contains a Klein bottle and is realized as $S^3_8(K)$ with $g(K) = 2$. If $J$ is trivial, then $Y=S^3$, $K=T(2,5)$, and $X = (-1; \frac{1}{2}, \frac{1}{2}, \frac{2}{5})$ as a Seifert fibered manifold.
\label{lem:MLemFinite}
\end{lemma}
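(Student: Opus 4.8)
The plan is to reduce the statement to a finite check using the constraints already assembled. Since $J$ is trivial, $M = S^3 \setminus \nu J = D^2 \times S^1$ is a solid torus, so $X = (D^2 \times S^1) \cup_h N$ is simply a Dehn filling of $N$, hence a Seifert fibered space over $S^2$ with at most three exceptional fibers (generically two from the Seifert structure on $N$ with base orbifold $D^2(2,2)$, plus the one introduced by the filling slope). First I would record that Proposition \ref{prop:pairhomology} forces $|p| = 2$ in order that $|H_1(X)| = 8$, and that Proposition \ref{prop:reducible8} forbids $X$ from being reducible — this rules out the filling slope $\phi_0$ (the rational longitude), which would produce $N$ capped off along its rational longitude and hence a connected sum or a manifold with a lens-space summand, and more generally pins down $X$ as a small Seifert fibered space of the form $(e; \tfrac12, \tfrac12, \tfrac{\beta}{\alpha})$ for some slope $\beta/\alpha$ determined by the filling. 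Because $X$ is realized as $S^3_8(K)$ with $g(K) = 2$, it is in particular a surgery on a knot in $S^3$ yielding $|H_1| = 8$ and, by Proposition \ref{prop:HF5of8}, has $\dim \widehat{\textit{HF}}(X,[s]) = 1$ for at least five of the eight $\text{spin}^c$ structures; I would note that combined with the fact that a small Seifert fibered space over $S^2$ with these homological data is very restricted, $X$ is forced to be an L-space, and moreover a spherical space form, i.e. a dihedral (prism/quaternionic-type) manifold — the "$\tfrac12,\tfrac12$" multiplicities signal a dihedral $\pi_1$.

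Next I would invoke Doig's classification \cite{Doi15} of finite non-cyclic surgeries on knots in $S^3$ with surgery coefficient $p \le 9$: this is a finite list, and I would cross it against our constraints (surgery slope $8$, $g(K) = 2$, $X$ a Dehn filling of $N$, $|H_1(X)| = 8$). The only survivor should be the prism-type manifold $\left(-1; \tfrac12, \tfrac12, \tfrac25\right)$, arising as $S^3_8(T(2,5))$; one then reads off $K = T(2,5)$ from the classification (or directly: $T(2,5)$ is the unique genus-two L-space knot whose $8$-surgery is this Seifert fibered space, since $8 > 2g(K)-1 = 3$ puts us in the large-surgery range where $\widehat{\textit{HF}}(S^3_8(T(2,5)))$ is computed by the $\widehat A_s$ of the $(2,5)$-torus knot and matches the $d$-invariants of the prism manifold). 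Finally, the trivial filling hypothesis automatically gives $Y = S^3$ since $M = D^2 \times S^1$ sits inside $S^3$.

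The main obstacle I expect is making the "$X$ must be one of finitely many Seifert fibered/dihedral manifolds" step airtight: one must verify that the Dehn fillings of $N$ that are candidates — i.e. those with $|H_1| = 8$, irreducible, and with $\widehat{\textit{HF}}$ simple enough to be a surgery on a genus-two knot — really do land in the scope of Doig's $p \le 9$ finite-surgery classification, rather than, say, being a non-L-space graph manifold that merely happens to have $|H_1| = 8$. The cleanest route is probably: first show $X$ is an L-space (using Proposition \ref{prop:HF5of8} together with a count of how a filling of $N$ distributes $\widehat{\textit{HF}}$ over $\text{spin}^c$ structures, or by noting that a small SFS over $S^2$ with $\le 3$ fibers and these homological invariants is an L-space exactly when it is not the unique toroidal-type exception, which is ruled out by irreducibility/atoroidality considerations), then show an L-space surgery of coefficient $8$ on a genus-two knot with finite $H_1$ must have finite fundamental group by the Seifert-fibered structure, landing us squarely in Doig's list; the matching of gradings/$d$-invariants to extract $T(2,5)$ and the exact Seifert invariants $\left(-1;\tfrac12,\tfrac12,\tfrac25\right)$ is then a routine verification.
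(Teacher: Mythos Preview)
Your overall strategy matches the paper's: recognize $X$ as a Dehn filling of $N$, land in the small Seifert fibered world, get an L-space, and invoke Doig's classification. But several of your steps are either circular or weaker than what is actually needed, and the paper fills each of them in with a specific tool you have not identified.

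First, you open with ``$M = S^3 \setminus \nu J = D^2 \times S^1$'' and close with ``$Y = S^3$ since $M = D^2 \times S^1$''. This is circular: $J$ trivial in $Y$ only gives $Y \setminus \nu J \cong D^2 \times S^1$ when $Y = S^3$; otherwise the complement is a punctured $Y$ glued to a solid torus and is reducible. The paper breaks the circle via Corollary~\ref{cor:irreduciblecomplement}: $M$ must be irreducible, which forces $Y = S^3$. Second, your route to ``$X$ is an L-space'' through Proposition~\ref{prop:HF5of8} only gives five of eight $\text{spin}^c$ structures with rank one, and you yourself flag this as the main obstacle. The paper bypasses this entirely: $N$ is a Heegaard Floer homology solid torus, so \emph{every} filling $N(\alpha)$ with $\alpha \neq \phi_0$ is an L-space. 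This immediately makes $K$ a genus-two L-space knot, hence $\widehat{\textit{HFK}}(K) \cong \widehat{\textit{HFK}}(T(2,5))$, and pins down the $d$-invariants of $S^3_8(K)$. Third, Doig's theorem classifies finite \emph{non-cyclic} surgeries, so you must separately exclude lens spaces; the paper does this by comparing the $d$-invariants of $S^3_8(K)$ against those of $L(8,\pm 1)$ and $L(8,\pm 3)$. Fourth, Doig's list with $|H_1| = 8$ contains \emph{two} dihedral manifolds, $(-1;\tfrac12,\tfrac12,\tfrac25)$ and $(-1;\tfrac12,\tfrac12,\tfrac23) = -S^3_8(T(2,3))$; the constraint $g(K)=2$ does not by itself exclude the second one from also being $S^3_8(K')$ for some genus-two $K'$, and the paper rules it out by a further $d$-invariant comparison. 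Finally, even after identifying $X = (-1;\tfrac12,\tfrac12,\tfrac25)$, your argument that $K = T(2,5)$ because it is ``the unique genus-two L-space knot'' with the right $\widehat{A}_s$ only shows $\widehat{\textit{HFK}}(K) \cong \widehat{\textit{HFK}}(T(2,5))$, not $K = T(2,5)$; the paper closes this with the fact that $8$ is a characterizing slope for $T(2,5)$ \cite{NZ18}.
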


\begin{proof}
We know from Corollary \ref{cor:irreduciblecomplement} that $M$ is irreducible, and so $Y = S^3$ and $M = D^2 \times S^1$. Such a gluing $X$ is a Dehn filling $N(\alpha)$, where $\alpha$ is a slope on $\partial N$. The twisted $I$-bundle over the Klein bottle $N$ has a Seifert structure with base orbifold $D^2(2,2)$ \cite{LW14}, and we may parametrize $\partial N$ using $\{\phi_0,\phi_1\}$ as before, where $\phi_0 = \lambda_N$ is the rational longitude of $N$ and $\phi_1$ is our preferred choice of curve dual to $\phi_0$. We have $N(\phi_1) = \R P^3 \# \R P^3$ and $b_1(N(\phi_0)) > 0$, and so we can consider $\alpha \neq \phi_0, \phi_1$. Since $N$ is a Heegaard Floer homology solid torus, $N(\alpha)$ is an L-space for all $\alpha \neq \phi_0$ \cite[Theorem 26]{HRW18}. Thus, $\widehat{\textit{HFK}}(K) \cong \widehat{\textit{HFK}}(T(2,5))$ since $K$ required to be an L-space knot.

Any Dehn filling $N(\alpha)$ for which $\alpha \neq \phi_0, \phi_1$ admits a pair of Seifert structures with base orbifolds $\R P^2(\Delta(\alpha, \phi_0))$ and $S^2(2,2, \Delta(\alpha, \phi_1))$. The filling $N(\alpha)$ has $b_1(N(\alpha))=0$ since $\partial N$ compresses in $S^3 \setminus \nu J$, and so it is either a lens space or non-cyclic. We obstruct $X \cong L(8,q)$ using the $d$-invariants, also known as the Heegaard Floer correction terms, from \cite{OS03a}. 

The $d$-invariants $d(L(p,q), [s])$ for lens spaces are well known and may be computed recursively using \cite[Proposition 4.8]{OS03a}, together with $d(-Y, [s]) = -d(Y, [s])$ and $d(L(1,1), [0]) = d(S^3, [0]) = 0$. In \cite{NW15} the $d$-invariants of integral surgeries are related to those of $d(L(p,1), [s])$ by
\[
d(S^3_p(K), [s]) = d(L(p,1), [s]) - 2\max\{V_s(K), H_{s-p}(K)\},
\]
where the $V_s(K)$ and $H_s(K)$ are integers related to the quotient complexes $\mathcal{A}^+_s$ of $CFK^{\infty}(S^3, K)$. Since $K$ has isomorphic knot Floer homology to that of $T(2,5)$, we must have $V_s(K) = V_s(T(2,5))$ and $H_s(K) = H_s(T(2,5))$. With $V_0(K)=V_1(K)=1$ and $V_s(K)=0$ for $s \geq 2$, one computes
\begin{align*}
d(S^3_8(K),[s]) = \left\{
	\begin{array}{l c}
	 -1/8 & s \equiv 5 \,\, (\text{mod} \,\, 8) \\
	 1/4 & s \equiv 6 \,\, (\text{mod} \,\, 8) \\
	 -9/8 & s \equiv 7 \,\, (\text{mod} \,\, 8) \\
	 -1/4 & s \equiv 0 \,\, (\text{mod} \,\, 8) \\
	 -9/8 & s \equiv 1 \,\, (\text{mod} \,\, 8) \\
	 1/4 & s \equiv 2 \,\, (\text{mod} \,\, 8) \\
	 -1/8 & s \equiv 3 \,\, (\text{mod} \,\, 8) \\
	 -1/4 & s \equiv 4 \,\, (\text{mod} \,\, 8)
	\end{array}
	\right.
\end{align*}

\noindent We have $d(L(8, \pm1),[t_1])= \pm 7/4$ and $d(L(8, \pm 3), [t_2]) = \pm 5/8$ for some $t_1,t_2 \in \Z$ from the recursion in \cite[Proposition 4.8]{OS03a}, but both of these differ from any $d(S^3_8(K), [s])$. Therefore, $X$ cannot be a lens space.

In \cite[Theorem 2]{Doi15}, Doig classifies finite, non-cyclic surgeries $S^3_r(K)$ for $|r| \leq 9$. Among these, the manifolds with $|H_1(X)| = 8$ are specific dihedral manifolds that are small Seifert fibered with base orbifold $S^2$. They are $-S^3_8(T(2,3)) = \left(-1; \tfrac{1}{2}, \tfrac{1}{2}, \tfrac{2}{3} \right)$, and $S^3_8(K) = \left(-1; \tfrac{1}{2}, \tfrac{1}{2}, \tfrac{2}{5} \right)$ for $\widehat{\textit{HFK}}(K) \cong \widehat{\textit{HFK}}(T(2,5))$. However, since eight is a characterizing slope for $T(2,5)$ due to \cite{NZ18}, we have $\left(-1; \tfrac{1}{2}, \tfrac{1}{2}, \tfrac{2}{5} \right) = S^3_8(T(2,5))$. For $T(2,3)$ we have $V_1(T(2,3)) = 0$, and so 
\[
d(-S^3_8(T(2,3), [1]) = -d(S^3_8(T(2,3)), [1]) = -\tfrac{7}{8}.
\]
This differs from any $d(S^3_8(K), [s])$ above when $K$ has the same knot Floer homology as $T(2,5)$, establishing the lemma.
\end{proof}

\subsection{Gluings with $J$ non-trivial}
\label{subsec:Jnontrivial}

Now suppose that $J$ is a non-trivial knot in $S^3$. There is a convenient visual way to track Floer homology associated to a given $\text{spin}^{c}$ structure of $X$, that we have already encountered in both Figure \ref{fig:4surgeryT25example} and in proving Lemma \ref{lem:lspaceJ}.

\begin{wrapfigure}{r}{0.4\linewidth}
\centering
\def\svgscale{0.6}
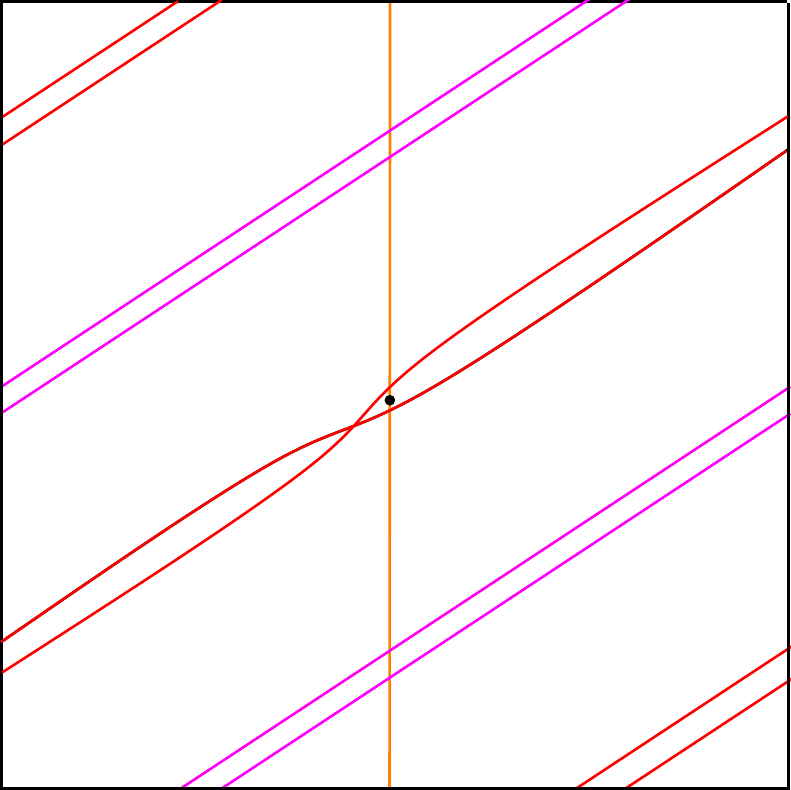
\caption{The 2/3-sloped curves of $N$ associated to $\s_1$ in $T_M$.}
\vspace{-1.5\intextsep}
\label{fig:NTwoThirds}
\end{wrapfigure}

By Theorem \ref{thm:ICpairing}, we have $\widehat{\textit{HF}}(X, \mathfrak{t})$ isomorphic to a summand of $HF(\widehat{\textit{HF}}(M, \s), h(\widehat{\textit{HF}}(N), \s_k))$ when $\mathfrak{t} \in \pi^{-1}(\s \times \s_k)$. Intersections $\mathbf{x}, \mathbf{y}$ in pairing generate Floer homology in the same $\text{spin}^{c}$ structure if and only if there exist paths $p_0$ from $\mathbf{x}$ to $\mathbf{y}$ in $\widehat{\textit{HF}}(M, \s)$ and $p_1$ from $\mathbf{x}$ to $\mathbf{y}$ in $h(\widehat{\textit{HF}}(N, \s_k))$, such that the concatenation of $p_0$ with $-p_1$ lifts to a closed, piecewise smooth path in $\widetilde{T}$ \cite[Section 2]{HRW18}.  When $h$ has slope $\pm 2/q$, a single lift $h(\widehat{\textit{HF}}(N, \s_k))$ of the component of $h(\widehat{\textit{HF}}(N))$ corresponding to $\s_k$ will fail to lift all intersections in $T_M$ generated by this component; two lifts of the component are required. 

This is motivated by Figure \ref{fig:NTwoThirds} for $h(\widehat{\textit{HF}}(N, \s_1))$ projected to $T_M$, where $h$ a cyclic gluing of slope 2/3. A single lift to $\overline{T}_M$ cannot simultaneously lift all intersections by the suggestively colored red and purple pieces of the projection of $h(\widehat{\textit{HF}}(N, \s_1)$. A simpler example of this is seen from Figure \ref{fig:4surgeryT25example}, where four lifts of $h(\widehat{\textit{HF}}(D^2 \times S^1)$ are needed. Additionally, Figure \ref{fig:plus2RHTpair} provides an example of the four required curves of $h(\widehat{\textit{HF}}(N, \s_1))$ with $h$ a cyclic gluing of slope 2. Notice that the lifted curves cannot generate intersections with the same $\text{Spin}^{c}$ grading as there is no path between the curves in $h(\widehat{\textit{HF}}(N, \s_k))$. For this reason, we can associate the eight $\mathfrak{t} \in \text{Spin}^{c}(X)$ with these lifted curves of $h(\widehat{\textit{HF}}(N, \s_1))$ and $h(\widehat{\textit{HF}}(N, \s_0))$.

Now that we can distinguish intersections generating Floer homology in different $\text{Spin}^{c}$ structures, we establish the second lemma. This is the extent to which ignoring the Maslov grading in the immersed curves package can push the case where $J$ is non-trivial. Since we are only interested in $X$ with $H_1(X) \cong \Z/8\Z$ and constrained dim $\widehat{\textit{HF}}(X, \mathfrak{t})$ from Proposition \ref{prop:HF5of8}, we assume these traits.

\begin{lemma}
Let $J$ be non-trivial and consider $X = (S^3 \setminus \nu J) \cup_h N$, where $h$ is any slope 2/q cyclic gluing.
If rk $\widehat{\textit{HF}}(X, \mathfrak{t}) = 1$ for at least five $\mathfrak{t} \in \text{Spin}^{c}(X)$, then $h$ has slope 2 and $J=T(2, 3)$.
\label{lem:MLemS3}
\end{lemma}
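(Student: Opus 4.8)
The plan is to pair the immersed curve invariant $\widehat{\textit{HF}}(S^3\setminus\nu J)$ with the two components of $h(\widehat{\textit{HF}}(N))$ inside $\overline{T}_M\cong S^1\times(\R\setminus\tfrac12\Z)$, and to count intersection points spin$^c$-structure by spin$^c$-structure, using Proposition \ref{prop:HF5of8} as the constraint: at least five of the eight $\mathfrak t\in\mathrm{Spin}^c(X)$ must carry rank-one Floer homology. First I would put $\widehat{\textit{HF}}(M,\s)$ in pegboard form, so it consists of the essential curve $\overline\gamma$ (homotopic to the homological longitude through the basepoints, of slope $2\tau(J)-\epsilon(J)$) together with possibly some inessential components; by mirroring I may assume $\tau(J)\ge 0$. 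As set up in Subsection \ref{subsec:dehntwist} and Subsection \ref{subsec:Jnontrivial}, for a slope $2/q$ gluing the component $h(\widehat{\textit{HF}}(N,\s_k))$ requires exactly two lifts to $\overline{T}_M$ to capture all intersections, so the eight spin$^c$ structures of $X$ are split as $4+4$ between $\s_1$ and $\s_0$, matched to these lifted curves.

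The key steps, in order: (1) Describe the lifted curves of $h(\widehat{\textit{HF}}(N,\s_1))$ and $h(\widehat{\textit{HF}}(N,\s_0))$ in $\overline{T}_M$ for a general slope $2/q$ cyclic gluing — the $\s_0$ component is the "loose" curve (a small circle around a single basepoint), and the $\s_1$ component is the more involved one carrying the $\rho_3/\rho_{12}$ structure of Figure \ref{fig:N0and1DstrNoTwist}; compute how each lift wraps vertically, i.e. which $\overline\mu_i$ it runs parallel to and with what multiplicity, as a function of $q$. (2) Observe that each lift must intersect $\overline\gamma$, and that an inessential component of $\widehat{\textit{HF}}(M)$ contributes an even number $2m$ of vertical segments to some $n_i$; this immediately forces extra intersection points in the spin$^c$ structures whose lifted $N$-curve crosses $\overline\mu_i$, pushing the rank above one in too many spin$^c$ structures. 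So, as in the proof of Lemma \ref{lem:lspaceJ}, I would argue that to keep five rank-one spin$^c$ structures, $\widehat{\textit{HF}}(M,\s)$ can have no inessential components, hence $M=S^3\setminus\nu J$ is loop type with $\widehat{\textit{HF}}(M,\s)=\overline\gamma$ and $n_i\in\{0,1\}$; by Lemma \ref{lem:lspaceJ}, $J$ is an L-space knot, so $|\tau(J)|=g(J)$ and $\overline\gamma$ has slope $2g(J)-1$, wrapping around $\overline z_i$ for $|i|<g(J)$. (3) Now count intersections of the slope-$(2g(J)-1)$ curve $\overline\gamma$ with the lifted $N$-curves as a function of $g(J)$ and of the gluing slope $2/q$: for $|q|$ large or $g(J)$ large the curve $\overline\gamma$ is forced to cross the relevant vertical segments of the lifts multiple times, making too many spin$^c$ structures have rank $\ge 2$. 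Pinning down the inequality should leave only $g(J)=1$ (so $J=T(2,3)$, the only genus-one L-space knot up to mirror, with $\tau>0$ forced by the orientation conventions) and $q=\pm1$, and then the Dehn-twisting normalization $s=-1,q=1$ of Subsection \ref{subsec:dehntwist} identifies the slope as $2$.

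The main obstacle I expect is step (3): carefully controlling the intersection count between the essential curve $\overline\gamma$ of slope $2g(J)-1$ and the two lifts of the $\s_1$-component of $h(\widehat{\textit{HF}}(N,\s_1))$ — which itself is not a straight line but has the characteristic "detour" coming from the $\rho_3$ and $\rho_{12}$ edges — as the parameters $q$ and $g(J)$ vary. One has to show that every configuration other than $(g(J),q)=(1,\pm1)$ produces at least four spin$^c$ structures of rank $\ge 2$ (equivalently at most four of rank one), contradicting Proposition \ref{prop:HF5of8}. This is most cleanly done by working in $\overline{T}_M$ (or the universal-type cover $\widetilde T$) with pegboard representatives so that the count is genuinely minimal, and by exploiting the hyperelliptic symmetry $n_{-i}=n_i$ to reduce to $i\ge 0$; the bookkeeping is the delicate part, but it is a finite case analysis once the general shape of the lifted $N$-curves is recorded. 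The remaining steps are routine given the machinery and lemmas already established.
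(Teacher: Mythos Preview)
Your proposal is essentially the same approach as the paper: pair $\widehat{\textit{HF}}(M,\s)$ in pegboard form against the lifted components of $h(\widehat{\textit{HF}}(N))$ in $\overline{T}_M$, use the five--rank-one constraint to force $n_i\le 1$ (hence $J$ an L-space knot via Lemma~\ref{lem:lspaceJ}), then pin down $g(J)=1$ and the slope. The logic and the tools are the same.

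The paper organizes the argument more efficiently than you propose, and this is worth noting. Rather than treating $q$ and $g(J)$ together in your step~(3), the paper kills $|q|>1$ \emph{first}, using only the loose component $h(\widehat{\textit{HF}}(N,\s_0))$: when $|2/q|<1$, each of the four $\s_0$-lifts meets every vertical segment of $\widehat{\textit{HF}}(M)$ more than once, so any $n_i\ge 1$ already forces four spin$^c$ structures of rank $\ge 2$ --- and $J$ nontrivial guarantees some $n_i\ge 1$. This is a one-line contradiction, and it means the L-space knot step and the genus step only need to be carried out for slope~$2$, where the $\s_1$-component has a single fixed shape. Your plan to analyze the $\s_1$-component ``as a function of $q$'' in step~(1) is unnecessary extra work.

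One small slip: you write ``by mirroring I may assume $\tau(J)\ge 0$.'' Mirroring $J$ changes $X$ and the gluing, so it is not a free move here; the conclusion distinguishes $T(2,3)$ from $T(2,-3)$. The paper instead rules out $T(2,-3)$ directly at the end by observing that the $+2$-sloped pairing with $\tau<0$ produces excess intersections. Your parenthetical ``$\tau>0$ forced by the orientation conventions'' is the right instinct, but it needs that explicit check rather than a mirroring reduction.
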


\noindent\textit{Proof.}
Let $M = S^3 \setminus \nu J$, and pull $\widehat{\textit{HF}}(M, \s)$ tight to pegboard form. From Theorem \ref{thm:ICpairing} we have 
\[
\widehat{\textit{HF}}(X, \mathfrak{t}) \cong HF(\widehat{\textit{HF}}(M, \s), h^{\pm 2}(\widehat{\textit{HF}}(N), \s_k)),
\]
where $\mathfrak{t} \in \pi^{-1}(\s \times \s_k)$.

\begin{wrapfigure}{r}{0.45\linewidth}
\centering
\def\svgscale{0.4}
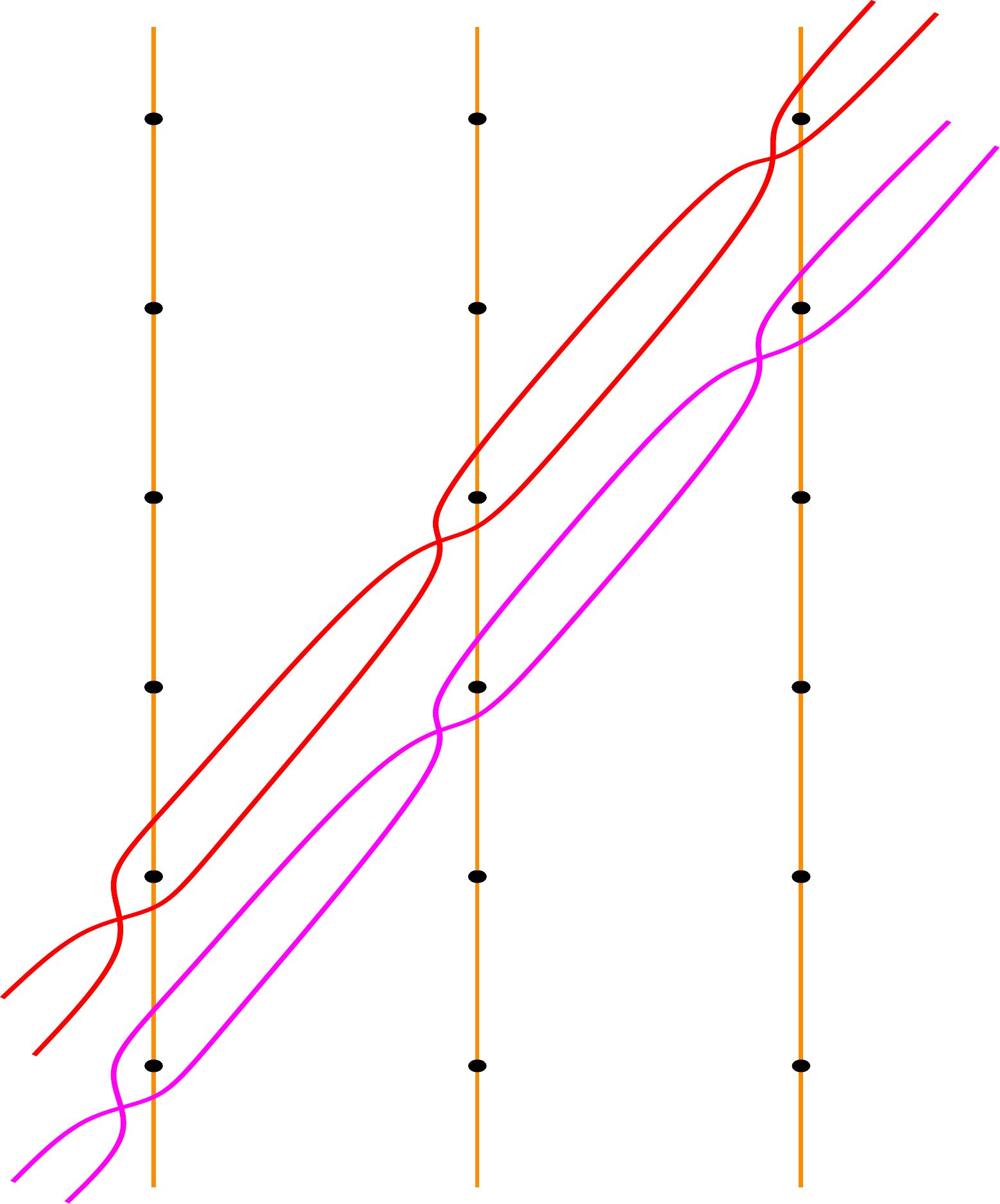
\caption{Intersections between the lifts of $h(\widehat{\textit{HF}}(N, \s_1))$ and potential vertical segments of $\widehat{\textit{HF}}(M)$, when $h$ is a slope 2 cyclic gluing.}
\vspace{-1.5\intextsep}
\label{fig:plus2modelt1}
\end{wrapfigure}

Suppose for the sake of contradiction that $|q| > 1$, so that the slope of $h$ satisfies $|2/q| < 1$. It is then immediate that all four lifts of the loose curves $h(\widehat{\textit{HF}}(N, \s_0))$ intersect each potential vertical segment of $\widehat{\textit{HF}}(M)$ more than once (such as in Figure \ref{fig:NTwoThirds}). Since dim $\widehat{\textit{HF}}(X, \mathfrak{t}) = 1$ for at least five $\mathfrak{t} \in \text{Spin}^{c}(X)$, we must have $n_i=0$ for all $|i| < g(J)$. However this condition is satisfied only by the unknot, which is the desired contradiction. Then $h$ is a slope 2 cyclic gluing.

Notice that each potential vertical segment of $\widehat{\textit{HF}}(M)$ at height $i$ intersects two of the four lifted curves of $h(\widehat{\textit{HF}}(N, \s_1))$, showcased in Figure \ref{fig:plus2modelt1}. Similarly, the same potential vertical segment intersects two of the four lifted curves of the loose component $h(\widehat{\textit{HF}}(N, \s_0))$. Then for all $i \in \Z$, we have $n_i$ contributing to rk $\widehat{\textit{HF}}(X, \mathfrak{t})$ in at least four different $\text{spin}^{c}$ structures. We are then forced to have $n_i \leq 1$ for all $i \in \Z$, which is equivalent to $\widehat{\textit{HF}}(M)$ not containing any inessential curve components. Therefore, $J$ is an $L$-space knot by Lemma \ref{lem:lspaceJ}. In the remark following that lemma, we see that the pegboard representative of $\widehat{\textit{HF}}(M) = \overline{\gamma}_{\s}$ for an L-space knot complement is completely determined by $\tau(J)$.

If $g(J) > 1$, four lifted components (two for each $\s_k$) of $h(\widehat{\textit{HF}}(N))$ intersect $\overline{\gamma}_{\s}$ more than once. This is shown in Figure \ref{fig:plus2tauJ2Lspacepair} when $\tau(J) = 2$, for two of the four such curves. Then we must have $g(J) = 1$ since $J$ is non-trivial, and so $J$ is either $T(2,3)$ or $T(2,-3)$. Observe that gluing $N$ to $S^3 \setminus \nu T(2,-3)$ by a $+2$-sloped cyclic gluing yields a manifold with excessive Floer homology since $\tau(T(2,-3)) < 0$, which establishes the lemma.
\qed

\begin{figure}[!ht]
\centering
\def\svgscale{0.5}
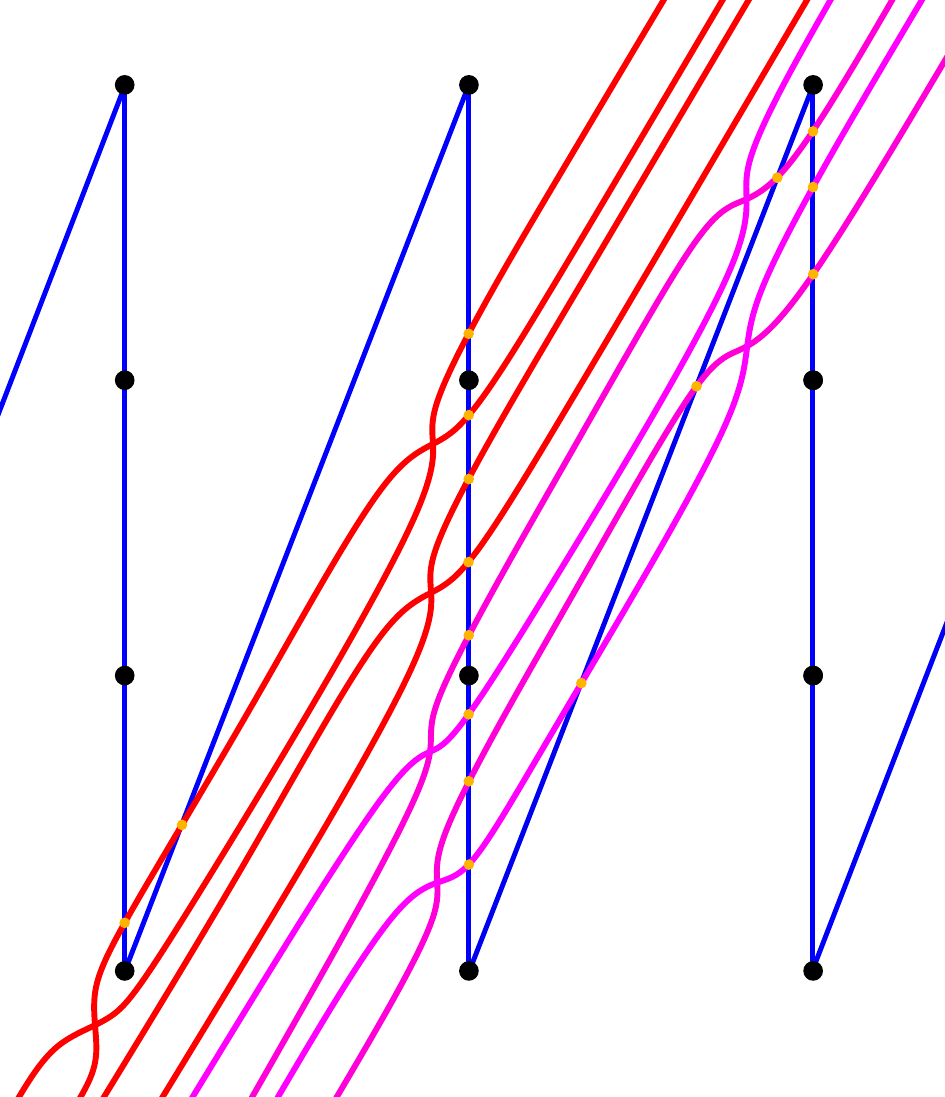
\caption{The pairing of $h(\widehat{\textit{HF}}(N))$ with $\widehat{\textit{HF}}(M)$, where $h$ is a slope 2 cyclic gluing and $J$ is an $L$-space knot with $\tau(J)=2$.}
\label{fig:plus2tauJ2Lspacepair}
\end{figure}

\section{Pairings and gradings}
\label{sec:gr}

In this section, we obstruct $(S^3 \setminus \nu T(2,3)) \cup_h N$ from being realized as $S^3_8(K)$ for $g(K)=2$ using the Maslov grading structure on $\widehat{\textit{HF}}(X)$. The relative $\Z$-grading on Heegaard Floer homology can be lifted to an absolute $\Q$-grading, and for an L-space surgery the grading of the generator of $\widehat{\textit{HF}}(S^3_p(K), [s])$ is given by $d(S^3_p(K), [s])$. In general these are easy to determine from $\textit{CFK}^{\infty}(K)$, which we did back in Lemma \ref{lem:MLemFinite} for $8$-surgery along a knot $K$ with $\widehat{\textit{HFK}}(K) \cong \widehat{\textit{HFK}}(T(2,5)$. 

While the immersed curves framework has been invaluable for ruling out most suitable knot complements $S^3 \setminus \nu J$, it is limited in its capacity to compare grading information across $\text{spin}^{c}$ structures. We will compute the relative $\Q$-grading on $\widehat{\textit{HF}}(X)$ using train tracks and their associated prototype pairing theorem (Subsection \ref{subsec:IC}), and then generate the desired obstruction by comparing grading differences. This is possible through the main result of \cite{LOT18a}, which we will give after a brief overview of the (refined) grading on bordered invariants (for manifolds with torus boundary).

From \cite[Section 11.1]{LOT18b}, the refined grading on the algebra $\mathcal{A}(\mathbb{T})$ takes values in a non-commutative group $G$ (arising as a $\Z$-central extension of $H_1(\mathbb{T})$). The group $G$ is generated by triples $(j; p, q)$ with $j, p, q \in \frac{1}{2}\Z$ and $p+q \in \Z$, and has a central element $\lambda = (1; 0, 0)$ (not to be confused with the homological longitude of $S^3 \setminus \nu J$). We will refer to $j$ as the Maslov component, and $(p,q)$ as the $\text{Spin}^{c}$ component. The group law is given by
\begin{align*}
(j_1; p_1, q_1) \cdot (j_2; p_2, q_2) = \Bigg(j_1 + j_2 
+ \left| \begin{matrix}
p_1 & q_1 \\
p_2 & q_2 \\
\end{matrix} \right|; p_1 + p_2, q_1 + q_2 \Bigg),
\end{align*}

\noindent and the gradings of the algebra elements are generated from the non-zero products on $\mathcal{A}(\mathbb{T})$ from:
\begin{align*}
\text{gr}(\rho_1) &= (-\tfrac{1}{2}; \tfrac{1}{2}, -\tfrac{1}{2}) \\
\text{gr}(\rho_2) &= (-\tfrac{1}{2}; \tfrac{1}{2}, \tfrac{1}{2}) \\
\text{gr}(\rho_3) &= (-\tfrac{1}{2}; -\tfrac{1}{2}, \tfrac{1}{2}) 
\end{align*}

Let $M_1$ be a bordered manifold. Given $\s_1 \in \text{Spin}^{c}(M_1)$, fix a base generator $\mathbf{x}_0 \in \widehat{\textit{CFA}}(M_1, \s_1)$. The module $\widehat{\textit{CFA}}(M_1, \s_1)$ is graded by the right $G$-set $G_A(M_1, \s_1) := P(\mathbf{x}_0) \backslash G$, with subgroup $P(\mathbf{x_0})$ defined in terms of periodic domains $B \in \pi_2(\mathbf{x}_0, \mathbf{x}_0)$ in a bordered Heegaard diagram for $M$. While this construction depends on the choice of base generator $\mathbf{x}_0$, different choices give isomorphic grading sets \cite[Section 10.3]{LOT18b}. We have $\pi_2(\mathbf{x}_0, \mathbf{x}_0) \cong H_2(M_1) \oplus \Z$ when $M_1$ has torus boundary, and so $P(\mathbf{x}_0)$ is cyclic if $M$ is a rational homology solid torus.

Since the bordered manifolds in this paper are rational homology solid tori, we have $\widehat{\textit{CFA}}(M_1, \s_1)$ graded by $\langle f \rangle \backslash G$ for some $f \in G$. Similarly, $\widehat{\textit{CFD}}(M_2, \s_2)$ is graded by the left $G$-set $G_D(M_2, \s) := G / \langle h \rangle$ for some $h \in G$. The tensor product $\widehat{\textit{CFA}}(M_1, \s_1) \boxtimes \widehat{\textit{CFD}}(M_2, \s_2)$ is graded by $G_A(M_1, \s_1) \times_G G_D(M_2, \s_2)$, and the grading of $\mathbf{x}_1 \otimes \mathbf{x}_2$ is $\text{gr}(\mathbf{x}_1 \otimes \mathbf{x}_2) = (\text{gr}(\mathbf{x}_1), \text{gr}(\mathbf{x}_2))$.

We can also work with rational periodic domains by extending $G$ and its multiplication over $\Q$ to obtain $G_{\Q}$, and define $G_{A, \Q}(M, \s) := \langle t \cdot f \rangle \backslash G_{\Q}$ and $G_{D, \Q}(M, \s) := G_{\Q} / \langle t \cdot h \rangle$ for $t \in \Q$. The main results of \cite{LOT18a} are combined in Theorem \ref{thm:relQ} below (in the case of manifolds with torus boundary). It states that the relative $\Q$-grading by $G_{A, \Q}(M_1, \s_1) \times_{G_{\Q}} G_{D, \Q}(M_2, \s_2)$ recovers the relative $\Q$-grading on $\widehat{\textit{CF}}(M_1 \cup_h M_2)$.

\begin{theorem}[{\cite[Theorem 1, Corollary 3.2, Remark 3.3]{LOT18a}}]
Consider the pairing $X = M_1 \cup_h M_2$, where the $M_i$ are compact, oriented 3-manifolds with torus boundary and $h$ is an orientation-reversing homeomorphism of boundaries. Suppose that $\mathbf{x}, \mathbf{y} \in \widehat{\textit{CFA}}(M_1, \alpha_1, \beta_1, \s_1) \boxtimes \widehat{\textit{CFD}}(M_2, h^{-1}(\beta_1), h^{-1}(\alpha_1), \s_2)$ are such that $\s(\mathbf{x})$ and $\s(\mathbf{y})$ are torsion and $\s(\mathbf{x})|_{M_i} = \s(\mathbf{y})|_{M_i} =: \s_i$ for $i=1,2$. Then $\text{gr}_{\Q}(\mathbf{x})$ and $\text{gr}_{\Q}(\mathbf{y})$ lie in the same $\Q$-orbit of $G_{A, \Q}(M_1, \s_1) \times_{G_{\Q}} G_{D, \Q}(M_2, \s_2)$. In particular, the $G$-set grading $\text{gr}_{\Q}$ determines the relative $\Q$-grading on $\widehat{\textit{HF}}$.
\label{thm:relQ}
\end{theorem}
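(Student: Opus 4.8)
\emph{Proof proposal.} The plan is to bootstrap from the integral pairing theorem (Theorem~\ref{thm:boxpairing} together with its graded refinement in \cite[Chapter~10]{LOT18b}), which already identifies $\widehat{\textit{CF}}(M_1 \cup_h M_2)$ with $\widehat{\textit{CFA}}(M_1,\s_1) \boxtimes \widehat{\textit{CFD}}(M_2,\s_2)$ as a complex carrying a relative grading by the $G$-set $G_A(M_1,\s_1) \times_G G_D(M_2,\s_2)$ that refines the relative Maslov $\Z$-grading. Two things must be added: first, that under the torsion hypothesis on $\s(\mathbf{x}),\s(\mathbf{y})$ the relative Maslov $\Z$-grading on $\widehat{\textit{HF}}(X)$ genuinely lifts to a relative $\Q$-grading; and second, that the $\Q$-extension of the $G$-set grading computes precisely this $\Q$-grading once one restricts attention to a single $\Q$-orbit. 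I would organize everything around periodic domains.

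First I would recall why the $\Q$-lift exists. The obstruction to refining the relative $\Z$-Maslov grading on $\widehat{\textit{HF}}(X,\s)$ is the evaluation of $c_1(\s)$ against periodic domains of a Heegaard diagram for $X$; when $\s$ is torsion this pairing vanishes rationally, so after tensoring the periodic-domain lattice with $\Q$ one obtains a well-defined relative $\Q$-grading (and a lift to an absolute one up to overall shift precisely when $H_2(X;\Q)=0$, e.g.\ when $X$ is a rational homology sphere). So on the closed side the object to be matched is a $\Q$-affine space modulo the image of the rational periodic-domain lattice.

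Next I would analyze periodic domains under the gluing. A periodic domain in a Heegaard diagram for $X$ assembled from diagrams for $M_1$ and $M_2$ restricts to periodic domains $P_1,P_2$ whose boundary classes on $\partial M_1 \cong \partial M_2$ agree, and conversely compatible pairs glue; this is the Heegaard-diagrammatic face of the Mayer--Vietoris computation of $H_2(X)$. Passing to $\Q$, the type-$A$ periodic subgroup becomes $\langle t\cdot f\rangle \subset G_{\Q}$ and the type-$D$ one $\langle t\cdot h\rangle \subset G_{\Q}$, and the requirement that $P_1$ and $P_2$ match along the boundary is exactly the condition defining the fiber product $G_{A,\Q}(M_1,\s_1) \times_{G_{\Q}} G_{D,\Q}(M_2,\s_2)$ over the common $\text{Spin}^c$-component quotient of $G_{\Q}$. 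I would then verify that $\mathbf{x}_1\otimes\mathbf{x}_2$ and $\mathbf{y}_1\otimes\mathbf{y}_2$ lie in the same $\Q$-orbit of this fiber product if and only if their grading difference in $\widehat{\textit{CF}}(X)$ is realized by a rational periodic domain of $X$, i.e.\ if and only if they share a relative $\Q$-grading class. Finally, the Maslov component of the fiber-product grading \emph{is} the relative $\Q$-Maslov grading: the box differential $\partial^{\boxtimes}$ lowers it by one (forced by $\text{gr}(\rho_1),\text{gr}(\rho_2),\text{gr}(\rho_3)$ and the $\mathcal{A}_{\infty}$ relations used in $m_{k+1}$), while its $\text{Spin}^c$ component manifestly records the $\text{Spin}^c$ decomposition, so the two refinements of the relative $\Z$-grading must coincide.

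The main obstacle is the noncommutative bookkeeping. Because $G$ is only a central $\Z$-extension of $H_1(\mathbb{T})$ and the type-$A$ and type-$D$ gradings live in right- and left-$G$-sets respectively, one has to check that the fiber product over $G_{\Q}$ is well-defined, that the rational periodic-domain subgroups $\langle t f\rangle$ and $\langle t h\rangle$ sit inside $G_{\Q}$ as the formalism predicts, and that collapsing to a single $\Q$-orbit removes exactly the ambiguity coming from $H_2(X;\Q)$ and nothing more. One also has to ensure the base generators $\mathbf{x}_0$ on the two sides are compatible under $h$, and that the torsion hypothesis on $\s(\mathbf{x})$ is genuinely invoked on \emph{both} factors, so that the $\Q$-lift is defined before the comparison is even meaningful. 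Once these compatibilities are pinned down, matching the Maslov components is essentially formal.
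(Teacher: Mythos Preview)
The paper does not prove this theorem. It is stated as a citation to Lipshitz--Ozsv\'ath--Thurston \cite{LOT18a} (specifically Theorem~1, Corollary~3.2, and Remark~3.3 there) and used as a black box in Section~\ref{sec:gr} to extract relative $\Q$-grading differences from the box tensor product. There is no argument in the present paper to compare your sketch against.

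Your outline is a reasonable summary of the strategy in \cite{LOT18a}: reduce to the integrally graded pairing theorem of \cite[Chapter~10]{LOT18b}, identify rational periodic domains on the glued manifold with compatible pairs on the pieces via Mayer--Vietoris, and then check that the Maslov component of the fiber-product grading in $G_{A,\Q}\times_{G_\Q}G_{D,\Q}$ recovers the relative $\Q$-Maslov grading. If you want feedback on the argument itself you should compare directly with \cite{LOT18a}; for the purposes of this paper the result is simply quoted.
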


Since $\widehat{\textit{CFA}}(M)$ is an $\mathcal{A}_{\infty}$-module graded by a right $G$-set, homogeneous elements $\mathbf{x} \in \widehat{\textit{CFD}}(M)$ and $\rho_{I_n} \in \mathcal{A}$ satisfy
\begin{align*}
\text{gr}(m_{k+1}(\mathbf{x}, \rho_{I_1}, \dots, \rho_{I_k})) &= \gamma^{k-1}\text{gr}(\mathbf{x})\text{gr}(\rho_{I_1})\cdots \text{gr}(\rho_{I_k}) & & \textit{if} \,\, \mathbf{x} \otimes \rho_{I_1} \otimes \cdots \otimes \rho_{I_k} \neq 0. \\ 
\end{align*}
\vspace{-1.8\intextsep}

Similarly, since $\widehat{\textit{CFD}}(M)$ is a left differential $\mathcal{A}$-module graded by a left $G$-set, homogeneous elements $\mathbf{x} \in \widehat{\textit{CFD}}(M)$ and $\rho_I \in \mathcal{A}$ satisfy both
\begin{align*}
\text{gr}(\rho_I \otimes \mathbf{x}) &= \text{gr}(\rho_I)\text{gr}(\mathbf{x}) & &\textit{if} \,\, \rho_I \otimes \mathbf{x} \neq 0 \\ 
\text{gr}(\partial \mathbf{x}) &= \gamma^{-1} \mathbf{x} & &\textit{if} \,\, \partial \mathbf{x} \neq 0.
\end{align*}

We can now use these properties to compute the refined gradings on the relevant bordered invariants, and use Theorem \ref{thm:relQ} to establish the final lemma to prove Theorem \ref{thm:main}. For the rest of the paper, let $M = S^3 \setminus \nu T(2,3)$. For our prototypical gluing $h$, we have

\begin{align*}
\widehat{\textit{HF}}(X) &= H_{\ast}(\widehat{CFA}(M, \mu, \lambda) \boxtimes \widehat{\textit{CFD}}(N, h^{-1}(\lambda), h^{-1}(\mu))) \\
&= H_{\ast}(\widehat{CFA}(M, \mu, \lambda) \boxtimes \widehat{\textit{CFD}}(N, 2\phi_1 + \phi_0, -\phi_1)).
\end{align*}

We first need to determine the refined gradings for $\widehat{\textit{CFA}}(M, \mu, \lambda)$, and both $\widehat{\textit{CFD}}(N, 2\phi_1 + \phi_0, -\phi_1, \s_1)$ and $\widehat{\textit{CFD}}(N, 2\phi_1 + \phi_0, -\phi_1, \s_0)$. We begin with the former, and work towards the latter two structures on $N$.

\subsection{Refined gradings for $\widehat{\textit{CFA}}(M, \mu, \lambda)$}
\label{subsec:M}

\begin{figure}[!ht]
\labellist
\small\hair 2pt
\pinlabel $\mathbf{x}_1$ at 123 112
\pinlabel $\mathbf{x}_2$ at 123 84
\pinlabel $\mathbf{x}_3$ at 123 56
\pinlabel $\mathbf{y}_1$ at 90 150
\pinlabel $\mathbf{y}_2$ at 67 150
\pinlabel $\mathbf{y}_3$ at 45 150
\pinlabel $\mathbf{y}_4$ at 22 150
\endlabellist
\centering
\includegraphics[scale=0.9]{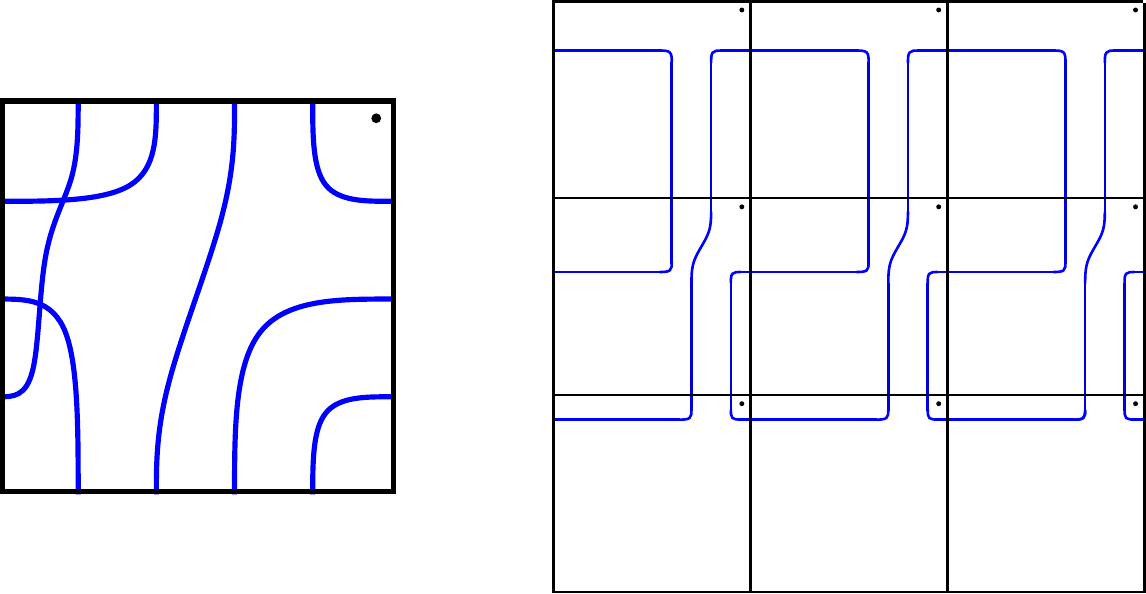}
\caption{Right: A particular representative of $\widehat{\textit{HF}}(M, \s)$ in $\widetilde{T}$. Left: The type A realization $A(\theta_M)$}
\label{fig:T23Abase3by3}
\end{figure}

\begin{wrapfigure}{r}{0.4\linewidth}
\vspace{1\intextsep}
\labellist
\small\hair 2pt
\pinlabel $\mathbf{x}_1$ at 230 99
\pinlabel $123$ at 225 148
\pinlabel $\mathbf{y}_1$ at 187 193
\pinlabel $1$ at 132 213
\pinlabel $\mathbf{x}_3$ at 85 218
\pinlabel $3$ at 35 200
\pinlabel $\mathbf{y}_4$ at -10 155
\pinlabel $2$ at -15 105
\pinlabel $\mathbf{x}_2$ at -10 45
\pinlabel $1$ at 30 7
\pinlabel $\mathbf{y}_2$ at 87 -15
\pinlabel $23$ at 145 -5
\pinlabel $\mathbf{y}_3$ at 189 9
\pinlabel $3$ at 218 55
\endlabellist
\centering
\includegraphics[scale=0.5]{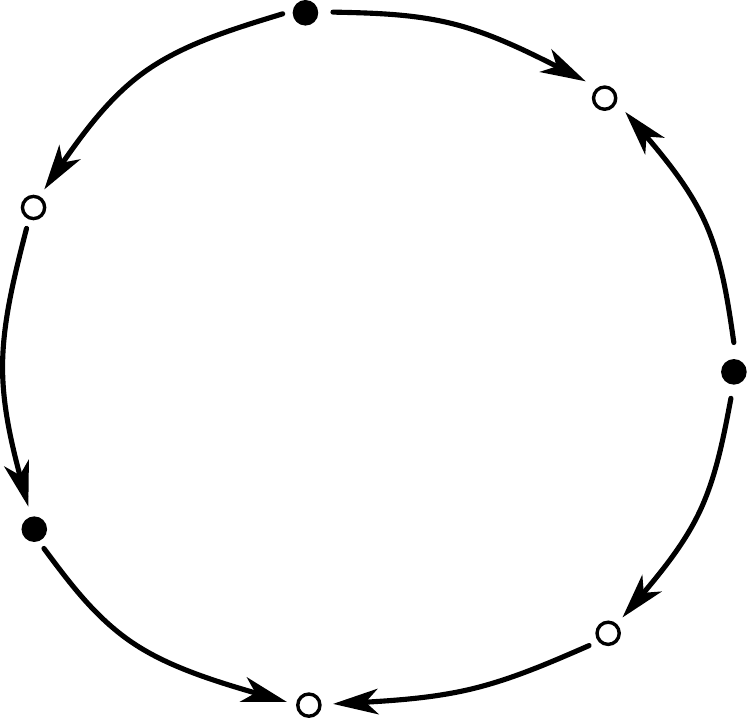}
\vspace{1\intextsep}
\caption{The decorated graph representation of $\widehat{\textit{CFD}}(M, \mu, \lambda)$}
\label{fig:MDstr}
\end{wrapfigure}

We will use the decorated graph representation of $\widehat{\textit{CFA}}(M, \mu, \lambda)$ to determine the refined gradings of its generators. To obtain this, let us reverse-engineer $A(\theta_M)$ from $\widehat{\textit{HF}}(M, \s)$ by projecting an appropriate representative onto $T_M$ (with $z$ at $(1-\epsilon, 1-\epsilon)$). The form of $\widehat{\textit{HF}}(M, \s)$ in $\widetilde{T}$ is simple due to Lemma \ref{lem:lspaceJ}. Seen in $\overline{T}_M$, the curve invariant initially wraps around a lift of $z$ at height $\tfrac{1}{2}$, then around a lift of $z$ at height $-\tfrac{1}{2}$ before wrapping back around the cylinder since $\tau(T(2,3))=1$ and $\epsilon(T(2,3))=1$. The choice representative in $\widetilde{T}$ is shown in Figure \ref{fig:T23Abase3by3}, together with the projection onto $T_M$ yielding the desired $A(\theta_M)$. Recall that the decorated graph representing $\widehat{\textit{CFD}}(M, \mu, \lambda)$ can be extracted from $A(\theta_M)$ using the edge identifications from Figure \ref{fig:ReebChords}. The result is shown in Figure \ref{fig:MDstr}. \\

\begin{wrapfigure}{r}{0.4\linewidth}
\vspace{1\intextsep}
\labellist
\small\hair 2pt
\pinlabel $\mathbf{x}_1$ at 230 99
\pinlabel $321$ at 225 148
\pinlabel $\mathbf{y}_1$ at 187 193
\pinlabel $3$ at 132 213
\pinlabel $\mathbf{x}_3$ at 85 218
\pinlabel $1$ at 35 200
\pinlabel $\mathbf{y}_4$ at -10 155
\pinlabel $2$ at -15 105
\pinlabel $\mathbf{x}_2$ at -10 45
\pinlabel $3$ at 30 7
\pinlabel $\mathbf{y}_2$ at 87 -15
\pinlabel $21$ at 145 -5
\pinlabel $\mathbf{y}_3$ at 189 9
\pinlabel $1$ at 218 55
\endlabellist
\centering
\includegraphics[scale=0.5]{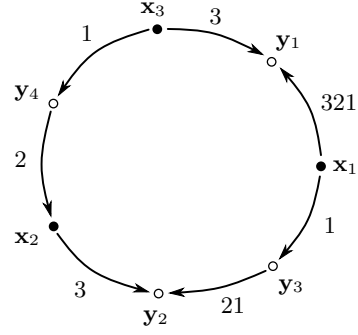}
\vspace{1\intextsep}
\caption{The decorated graph representation of $\widehat{\textit{CFA}}(M, \mu, \lambda)$}
\label{fig:MAstr}
\end{wrapfigure}

Finally, an algorithm of Hedden and Levine may be followed to obtain the decorated graph representation of $\widehat{\textit{CFA}}(M, \mu, \lambda)$ \cite{HL16}. The idempotent splitting according to vertex labels remains the same, but the edge labels are interpreted differently. First, rewrite them according to the bijection $1 \leftrightarrow 3$. Given a directed path from $x$ to $y$, construct a sequence $I = I_1, \dots I_k$ and assign the multiplication $m_{k+1}(x \otimes \rho_{I_1} \otimes \cdots \otimes \rho_{I_k}) = y$. Reading the edge labels of the directed path in order, form $I$ by regrouping to find the minimum $k$ so that each $I_j$ is an element of $\{ 1, 2, 3, 12, 13, 23, 123 \}$. For example, the edge labeled $\{23\}$ from $\mathbf{y}_3$ to $\mathbf{y}_2$ in Figure \ref{fig:MDstr} gives the sequence $I = \{2, 1\}$ and the product $m_3(\mathbf{y}_3, \rho_2, \rho_1) = \mathbf{y}_2$. The resulting decorated graph is shown in Figure \ref{fig:MAstr}. 

Reading the edge labels gives each generator's contribution to the $m_k$, and allows us to determine their refined gradings taking values in $\langle f \rangle \backslash G$, up to some indeterminacy $f$. Set $\mathbf{x}_1$ to be the base generator. Since the decorated graph for $\widehat{\textit{CFA}}(M, \mu, \lambda)$ is valence 2 and connected, we can traverse an oriented path from $\mathbf{x}_1$ to itself through the other generators to simultaneously compute the refined gradings of the remaining generators and the indeterminacy $f$.

The relevant contributions to the $m_k$ are the following:
\begin{align*}
&1) \,\, m_4(\mathbf{x}_1, \rho_3, \rho_2, \rho_1) = \mathbf{y}_1 & & 4) \,\, m_3(\mathbf{y}_3, \rho_2, \rho_1) = \mathbf{y}_2 \\
&2) \,\, m_2(\mathbf{x}_3, \rho_3) = \mathbf{y}_1 & & 5) \,\, m_2(\mathbf{x}_1, \rho_1) = \mathbf{y}_3 \\
&3) \,\, m_2(\mathbf{x}_3, \rho_{123}) = \mathbf{y}_2 & &
\end{align*}

For the first term $m_4(\mathbf{x}_1, \rho_3, \rho_2, \rho_1) = \mathbf{y}_1$, we see that

\begin{align*}
\text{gr}(\mathbf{y}_1) &= \gamma^2 \text{gr}(\mathbf{x}_1)\text{gr}(\rho_3)\text{gr}(\rho_2)\text{gr}(\rho_1) \\
&= \text{gr}(\mathbf{x}_1) \gamma^2 \text{gr}(\rho_3)\left(-\tfrac{1}{2}; \tfrac{1}{2}, \tfrac{1}{2} \right) \left(-\tfrac{1}{2}; \tfrac{1}{2}, -\tfrac{1}{2} \right) \\
&= \text{gr}(\mathbf{x}_1) \gamma^2 \left(-\tfrac{1}{2}; -\tfrac{1}{2}, \tfrac{1}{2} \right) \left(-\tfrac{3}{2}; 1, 0\right) \\
&= \text{gr}(\mathbf{x}_1) \left(2; 0, 0 \right) \left(-\tfrac{5}{2}; \tfrac{1}{2}, \tfrac{1}{2} \right) \\
&= \langle f \rangle \backslash \left(-\tfrac{1}{2}; \tfrac{1}{2}, \tfrac{1}{2} \right)
\end{align*}

Performing this for the remaining generators, we obtain their (undetermined) refined gradings in $\langle f \rangle \backslash G$:

\begin{align*}
\text{gr}(\mathbf{x}_1) &= \langle f \rangle \backslash \left(0; 0, 0 \right) & & \text{gr}(\mathbf{y}_1) = \langle f \rangle \backslash \left(-\tfrac{1}{2}; \tfrac{1}{2}, \tfrac{1}{2}\right) \\
\text{gr}(\mathbf{x}_2) &= \langle f \rangle \backslash \left(-1; 2, 0 \right) & & \text{gr}(\mathbf{y}_2) = \langle f \rangle \backslash \left(-\tfrac{1}{2}; \tfrac{3}{2}, \tfrac{1}{2} \right) \\
\text{gr}(\mathbf{x}_3) &= \langle f \rangle \backslash \left(-\tfrac{1}{2}; 1, 0 \right) & & \text{gr}(\mathbf{y}_3) = \langle f \rangle \backslash \left(\tfrac{1}{2}; \tfrac{1}{2}, \tfrac{1}{2} \right) \\
& & & \text{gr}(\mathbf{y}_4) = \langle f \rangle \backslash \left(-\tfrac{3}{2}; \tfrac{3}{2}, -\tfrac{1}{2} \right)
\end{align*}

The final term $m_2(\mathbf{x}_1, \rho_1) = \mathbf{y}_3$ allows us to pin down $f$:

\begin{align*}
\text{gr}(\mathbf{x}_1) \text{gr}(\rho_1) &= \text{gr}(\mathbf{y}_3) \\
\Rightarrow \text{gr}(\mathbf{x}_1) &= \text{gr}(\mathbf{y}_3)\text{gr}(\rho_1)^{-1} \\
&= \langle f \rangle \backslash \left(\tfrac{1}{2}; \tfrac{1}{2}, \tfrac{1}{2} \right) \left(\tfrac{1}{2}; -\tfrac{1}{2}, \tfrac{1}{2} \right) \\
&= \langle f \rangle \backslash \left(\tfrac{3}{2}; 0, 1 \right)
\end{align*}

Thus, $f = \left(\tfrac{3}{2}; 0, 1 \right)$ since it is primitive. The refined gradings for generators of $\widehat{\textit{CFA}}(M, \mu, \lambda)$ are the following:

\begin{align*}
\text{gr}(\mathbf{x}_1) &= \langle (\tfrac{3}{2}; 0, 1) \rangle \backslash \left(0; 0, 0 \right) & & \text{gr}(\mathbf{y}_1) = \langle (\tfrac{3}{2}; 0, 1) \rangle \backslash \left(-\tfrac{1}{2}; \tfrac{1}{2}, \tfrac{1}{2}\right) \\
\text{gr}(\mathbf{x}_2) &= \langle (\tfrac{3}{2}; 0, 1) \rangle \backslash \left(-1; 2, 0 \right) & & \text{gr}(\mathbf{y}_2) = \langle (\tfrac{3}{2}; 0, 1) \rangle \backslash \left(-\tfrac{1}{2}; \tfrac{3}{2}, \tfrac{1}{2} \right) \\
\text{gr}(\mathbf{x}_3) &= \langle (\tfrac{3}{2}; 0, 1) \rangle \backslash \left(-\tfrac{1}{2}; 1, 0 \right) & & \text{gr}(\mathbf{y}_3) = \langle (\tfrac{3}{2}; 0, 1) \rangle \backslash \left(\tfrac{1}{2}; \tfrac{1}{2}, \tfrac{1}{2} \right) \\
& & & \text{gr}(\mathbf{y}_4) = \langle (\tfrac{3}{2}; 0, 1) \rangle \backslash \left(-\tfrac{3}{2}; \tfrac{3}{2}, -\tfrac{1}{2} \right)
\end{align*}

\subsection{Refined gradings for $\widehat{\textit{CFD}}(N, 2\phi_1+\phi_0, -\phi_1, \s_0)$}
\label{subsec:N0}

In order to obtain the desired decorated graph representation, we can reverse-engineer its form from $A(\theta_{h(N, \s_0)})$ (shown on the right side of Figure \ref{fig:N1and2Abase} back in Subsection \ref{subsec:IC}.) This involves performing a series of Dehn twists and reflections on $T_N$ for the type A realization of $\widehat{\textit{CFD}}(N, \phi_1, \phi_0, \s_0)$, illustrated in Figure \ref{fig:N0twists}. First, we use the reflection $r(y=\tfrac{1}{2})$ to obtain $\widehat{\textit{CFD}}(N, -\phi_1, \phi_0, \s_0)$. Second, Dehn twisting -2 times along $-\phi_1$ yields $\widehat{\textit{CFD}}(N, -\phi_1, 2\phi_1 + \phi_0, \s_0)$. Finally we use the reflection $r(y=x)$, giving the bottom-right collection of curves $A(\theta_{h(N, \s_0)})$.

\begin{figure}[!ht]
\centering
\includegraphics[scale=1]{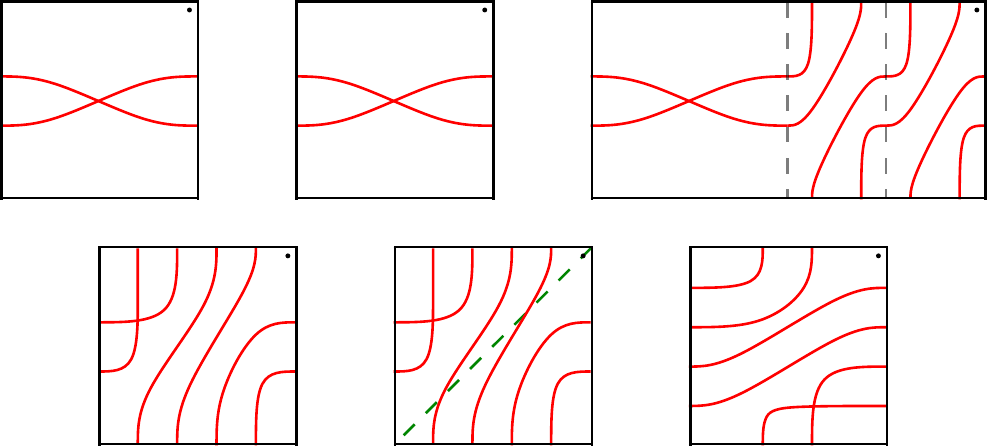}
\caption{The series of Dehn twists and reflections required to obtain the type A realization of $\widehat{\textit{CFD}}(N, 2\phi_1 + \phi_0, -\phi_1, \s_0)$. }
\label{fig:N0twists}
\end{figure}

The corresponding decorated graph representation of $\widehat{\textit{CFD}}(N, 2\phi_1 + \phi_0, -\phi_1, \s_0)$ is shown in Figure \ref{fig:N0DstrPlus2}. Label the $\iota_0$ generators by $\mathbf{a}_i$ and the $\iota_1$ generaters by $\mathbf{b}_i$. Reading the edge labels gives each generator's contribution to $\delta^1$, and allows us to determine their refined gradings in $G / \langle h_0 \rangle$ up to some indeterminacy $h$. Set $\mathbf{a}_1$ to be the base generator. Since the decorated graph at hand is valence 2 and connected, we can traverse an oriented path from $\mathbf{a}_1$ to itself through the other generators to simultaneously compute the refined gradings of the remaining generators and the indeterminacy $h_0$. This is performed below using the properties of the refined grading.

\begin{figure}[!ht]
\vspace{1\intextsep}
\labellist
\small\hair 2pt
\pinlabel $\mathbf{a}_1$ at 167 70
\pinlabel $\rho_{12}$ at 158 112
\pinlabel $\mathbf{a}_3$ at 122 145
\pinlabel $\rho_{1}$ at 77 153
\pinlabel $\mathbf{b}_1$ at 32 145
\pinlabel $\rho_{3}$ at 0 115
\pinlabel $\mathbf{a}_2$ at -15 70
\pinlabel $\rho_{12}$ at 0 25
\pinlabel $\mathbf{a}_4$ at 32 -5
\pinlabel $\rho_{1}$ at 77 -10
\pinlabel $\mathbf{b}_2$ at 122 -5
\pinlabel $\rho_{3}$ at 155 25
\endlabellist
\centering
\includegraphics[scale=0.65]{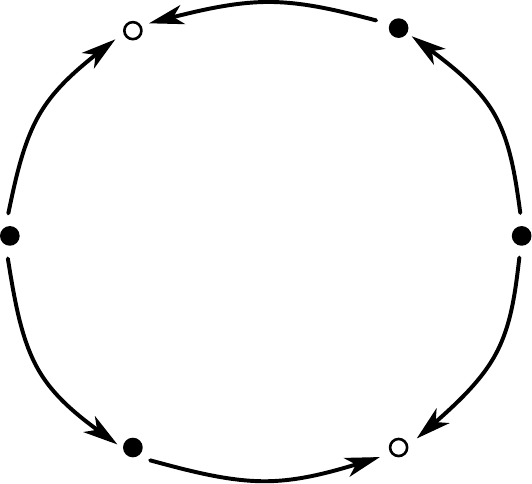}
\caption{The decorated graph representation of $\widehat{\textit{CFD}}(N, \phi_0+2\phi_1, -\phi_1, \s_0)$}
\label{fig:N0DstrPlus2}
\end{figure}

The relevant contributions to $\delta^1$ are

\begin{align*}
1) \,\, \rho_{12} \otimes \mathbf{a}_3 \in \delta^1 \mathbf{a}_1 & & 4) \,\, \rho_{12} \otimes \mathbf{a}_4 \in \delta^1 \mathbf{a}_2 \\
2) \,\, \rho_{1} \otimes \mathbf{b}_1 \in \delta^1 \mathbf{a}_3 & & 5) \,\, \rho_{1} \otimes \mathbf{b}_2 \in \delta^1 \mathbf{a}_4 \\
3) \,\, \rho_{3} \otimes \mathbf{b}_1 \in \delta^1 \mathbf{a}_2 & & 6) \,\, \rho_3 \otimes \mathbf{b}_2 \in \delta^1 \mathbf{a}_1
\end{align*}

For the first term $\rho_{12} \otimes \mathbf{a}_3 \in \delta^1 \mathbf{a}_1$, we see that

\begin{align*}
\lambda^{-1}\text{gr}(\mathbf{a}_1) &= \text{gr}(\rho_{12})\text{gr}(\mathbf{a}_3) \\
\lambda^{-1}\text{gr}(\mathbf{a}_1) &= \text{gr}(\rho_1)\text{gr}(\rho_2)\text{gr}(\mathbf{a}_3) \\
\Rightarrow \text{gr}(\mathbf{a}_3) &= \text{gr}(\rho_2)^{-1} \text{gr}(\rho_1)^{-1} \lambda^{-1} \text{gr}(\mathbf{a}_1) \\
&= \left(\tfrac{1}{2}; -\tfrac{1}{2}, -\tfrac{1}{2} \right) \left(\tfrac{1}{2}; -\tfrac{1}{2}, \tfrac{1}{2} \right) \lambda^{-1} \text{gr}(\mathbf{a}_1) \\
&= \left(\tfrac{1}{2}; -1, 0 \right) \left(-1; 0,0 \right) \text{gr}(\mathbf{a}_1) \\
&= \left(-\tfrac{1}{2}; -1, 0 \right)/\langle h_0 \rangle.
\end{align*}

Performing this for the remaining generators, we obtain their (undetermined) refined gradings in $G/\langle h_0 \rangle$.

\begin{align*}
\text{gr}(\mathbf{a}_1) &= \left(0; 0, 0 \right)/\langle h_0 \rangle & & \text{gr}(\mathbf{a}_4) = \left(-1; -3, 1 \right)/\langle h_0 \rangle \\
\text{gr}(\mathbf{a}_2) &= \left(\tfrac{1}{2}; -2, 1 \right)/\langle h_0 \rangle & & \text{gr}(\mathbf{b}_1) = \left(-\tfrac{1}{2}; -\tfrac{3}{2}, \tfrac{1}{2} \right)/\langle h_0 \rangle \\
\text{gr}(\mathbf{a}_3) &= \left(-\tfrac{1}{2}; -1, 0 \right)/\langle h_0 \rangle & & \text{gr}(\mathbf{b}_2) = \left(-\tfrac{1}{2}; -\tfrac{7}{2}, \tfrac{3}{2} \right)/\langle h_0 \rangle
\end{align*}

The final term $\rho_3 \otimes \mathbf{b}_2 \in \delta^1 \mathbf{a}_1$ allows us to pin down $h_0$:

\begin{align*}
\lambda^{-1}\text{gr}(\mathbf{a}_1) &= \text{gr}(\rho_3)\text{gr}(\mathbf{b}_2) \\
\Rightarrow \text{gr}(\mathbf{a}_1) &= \lambda \text{gr}(\rho_3)\text{gr}(\mathbf{b}_2) \\
&= \left(1; 0, 0 \right) \left(-\tfrac{1}{2}; -\tfrac{1}{2}, \tfrac{1}{2} \right) \text{gr}(\mathbf{b}_2) \\
&= \left(\tfrac{1}{2}; -\tfrac{1}{2}, \tfrac{1}{2} \right) \left(-\tfrac{1}{2}; -\tfrac{7}{2}, \tfrac{3}{2} \right) \text{gr}(\mathbf{a}_1) \\
&= \left(1; -4, 2 \right)/\langle h_0 \rangle.
\end{align*}

Then $h_0$ divides $(-1; 4, -2)$, and so the two possibilities for $h_0$ are $(-\tfrac{1}{2}; 2, -1)$ or $(-1; 4, -2)$. Supposing the former to generate a contradiction, we would have $\text{gr}(\mathbf{a}_2) = \text{gr}(\mathbf{a}_1)$ in $G/\langle h_0 \rangle$, which in turn forces $\s(\mathbf{x}_1 \boxtimes \mathbf{a_1}) = \s(\mathbf{x}_1 \boxtimes \mathbf{a_2})$. This contradicts the discussion in Subsection \ref{subsec:Jnontrivial}, where these generators correspond to separate $\text{spin}^{c}$ structures because no closed, piecewise smooth path in $\widetilde{T}$ exists to connect them. Then with $h_0 = (-1; 4, -2)$, the refined gradings for generators of $\widehat{\textit{CFD}}(N, 2\phi_1 + \phi_0, -\phi_1, \s_1)$ are the following:

\begin{align*}
\text{gr}(\mathbf{a}_1) &= \left(0; 0, 0 \right)/\langle \left(-1; 4, -2 \right) \rangle & & \text{gr}(\mathbf{a}_4) = \left(-1; -3, 1 \right)/\langle \left(-1; 4, -2 \right) \rangle \\
\text{gr}(\mathbf{a}_2) &= \left(\tfrac{1}{2}; -2, 1 \right)/\langle \left(-1; 4, -2 \right) \rangle & & \text{gr}(\mathbf{b}_1) = \left(-\tfrac{1}{2}; -\tfrac{3}{2}, \tfrac{1}{2} \right)/\langle \left( -1; 4, -2 \right) \rangle \\
\text{gr}(\mathbf{a}_3) &= \left(-\tfrac{1}{2}; -1, 0 \right)/\langle \left(-1; 4, -2 \right) \rangle & & \text{gr}(\mathbf{b}_2) = \left(-\tfrac{1}{2}; -\tfrac{7}{2}, \tfrac{3}{2} \right)/\langle \left(-1; 4, -2 \right) \rangle
\end{align*}

It is interesting to note that
\begin{align*}
\text{gr}_{\Q}(\mathbf{a}_2) &= \left(\tfrac{1}{2}; -2, 1 \right)/\langle \left( -1; 4, -2 \right) \rangle \\
&= \left(\tfrac{1}{2}; -2, 1 \right) \left( -\tfrac{1}{2}; 2, -1 \right) /\langle \left( -1; 4, -2 \right) \rangle \\
&= \left(0; 0, 0 \right) /\langle \left( -1; 4, -2 \right) \rangle \\
&= \text{gr}_{\Q}(\mathbf{a}_1),
\end{align*}
by acting over $\Q$. The same holds true for the other generators, with $\text{gr}_{\Q}(\mathbf{a}_4) = \text{gr}_{\Q}(\mathbf{a}_3)$ and $\text{gr}_{\Q}(\mathbf{b}_1) = \text{gr}_{\Q}(\mathbf{b}_2)$. Generators of $HF(\widehat{\textit{HF}}(S^3 \setminus \nu J), h(\widehat{\textit{HF}}(N, \s_0)))$ come in pairs exhibiting this feature, which suggests that they are associated to conjugate $\text{spin}^{c}$ structures on $X$. That this behavior does not depend on $J$ is enough to show this, but we will not need it for our purposes.

Finally, we will need the corresponding type D realization $D(\theta_{h(N, \s_0)})$ for pairing in Subsection \ref{subsec:grcomputation}, presented now in Figure \ref{fig:N0DfromA}.

\begin{figure}[!ht]
\vspace{1\intextsep}
\labellist
\small\hair 2pt
\pinlabel $\mathbf{a}_1$ at 148 108
\pinlabel $\mathbf{a}_2$ at 148 80
\pinlabel $\mathbf{a}_3$ at 148 53
\pinlabel $\mathbf{a}_4$ at 148 25
\pinlabel $\mathbf{b}_1$ at 86 147
\pinlabel $\mathbf{b}_2$ at 50 147
\pinlabel $\mathbf{a}_1$ at 232 -9
\pinlabel $\mathbf{a}_2$ at 259 -9
\pinlabel $\mathbf{a}_3$ at 286 -9
\pinlabel $\mathbf{a}_4$ at 313 -9
\pinlabel $\mathbf{b}_1$ at 193 84
\pinlabel $\mathbf{b}_2$ at 193 50
\endlabellist
\centering
\includegraphics[scale=0.8]{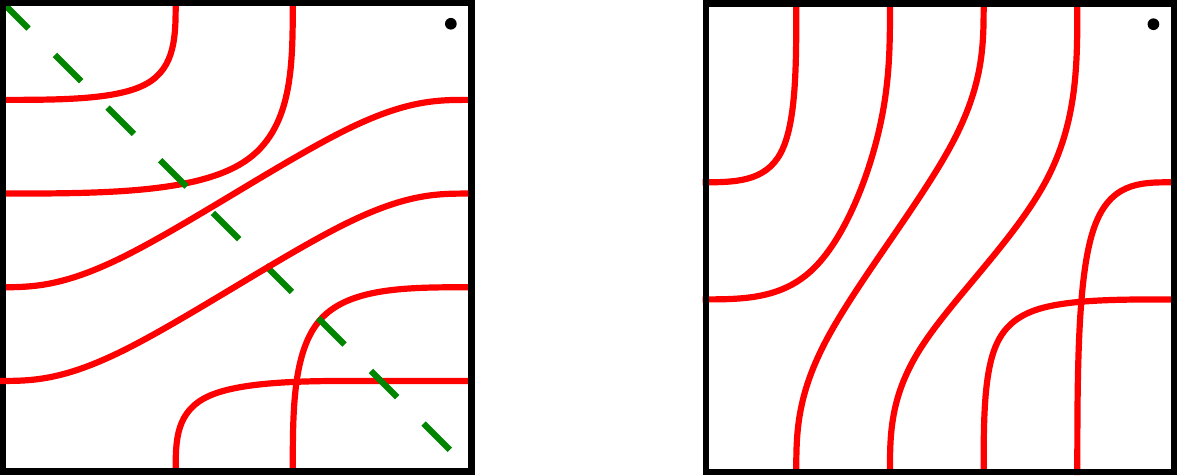}
\vspace{1\intextsep}
\caption{Left: $A(\theta_{h(N, \s_0)})$. Right: Obtaining $D(\theta_{h(N, \s_0)})$ via $r(y=-x)$.}
\label{fig:N0DfromA}
\end{figure}

\subsection{Refined gradings for $\widehat{\textit{CFD}}(N, 2\phi_1+\phi_0, -\phi_1, \s_1)$}
\label{subsec:N1}

Recall from Subsection \ref{subsec:IC} that the decorated graph on the left side of Figure \ref{fig:N0and1DstrNoTwist} gives rise to $A(\theta_{N, \s_1})$ shown in Figure \ref{fig:N1and2Abase}. The same series of Dehn twists and reflections as those in the previous subsection can be applied to obtain $A(\theta_{h(N, \s_1)})$ and $D(\theta_{h(N, \s_1)})$. The end results are depicted in Figure \ref{fig:N1DfromA}.

\begin{figure}[!ht]
\vspace{1\intextsep}
\labellist
\small\hair 2pt
\pinlabel $\mathbf{z}_1$ at 148 108
\pinlabel $\mathbf{z}_2$ at 148 80
\pinlabel $\mathbf{z}_3$ at 148 53
\pinlabel $\mathbf{z}_4$ at 148 25
\pinlabel $\mathbf{y}_1$ at 86 147
\pinlabel $\mathbf{y}_2$ at 50 147
\pinlabel $\mathbf{z}_1$ at 232 -9
\pinlabel $\mathbf{z}_2$ at 259 -9
\pinlabel $\mathbf{z}_3$ at 286 -9
\pinlabel $\mathbf{z}_4$ at 313 -9
\pinlabel $\mathbf{y}_1$ at 193 84
\pinlabel $\mathbf{y}_2$ at 193 50
\endlabellist
\centering
\includegraphics[scale=0.8]{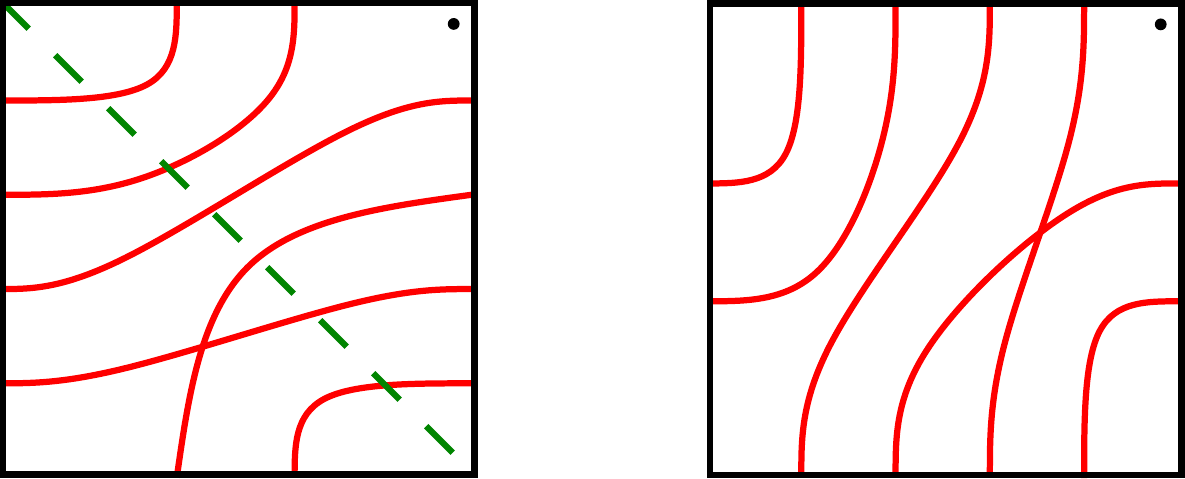}
\vspace{1\intextsep}
\caption{Left: $A(\theta_{h(N, \s_1)})$. Right: Obtaining $D(\theta_{h(N, \s_1)})$ via $r(y=-x)$.}
\label{fig:N1DfromA}
\end{figure}

Label the $\iota_0$ generators by $\mathbf{z}_i$ and the $\iota_1$ generators by $\mathbf{w}_i$. The decorated graph for $\widehat{\textit{CFD}}(N, 2\phi_1 + \phi_0, -\phi_1, \s_0)$ is then given by Figure \ref{fig:N1DstrPlus2}.

\begin{figure}[!ht]
\vspace{1\intextsep}
\labellist
\small\hair 2pt
\pinlabel $\mathbf{z}_1$ at 167 70
\pinlabel $\rho_{12}$ at 158 112
\pinlabel $\mathbf{z}_3$ at 122 145
\pinlabel $\rho_{12}$ at 77 153
\pinlabel $\mathbf{z}_4$ at 32 145
\pinlabel $\rho_{1}$ at 0 115
\pinlabel $\mathbf{w}_1$ at -15 70
\pinlabel $\rho_{3}$ at 0 25
\pinlabel $\mathbf{z}_2$ at 32 -5
\pinlabel $\rho_{1}$ at 77 -10
\pinlabel $\mathbf{w}_2$ at 122 -5
\pinlabel $\rho_{3}$ at 155 25
\endlabellist
\centering
\includegraphics[scale=0.65]{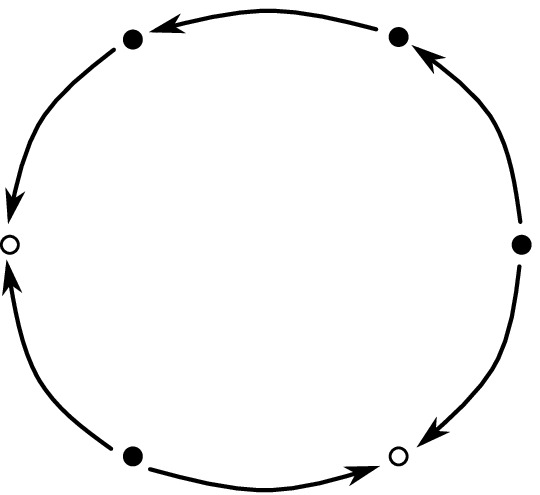}
\caption{The decorated graph representation of $\widehat{\textit{CFD}}(N, \phi_0+2\phi_1, -\phi_1, \s_0)$}
\label{fig:N1DstrPlus2}
\end{figure}

Using similar computations to those in the previous subsection, we have that the refined gradings for generators of $\widehat{\textit{CFD}}(N, 2\phi_1 + \phi_0, -\phi_1, \s_1)$ lie in $G/\langle \left(-3; 4, -2 \right) \rangle$ and are the following:

\begin{align*}
\text{gr}(\mathbf{z}_1) &= \left(0; 0, 0 \right)/\langle \left(-3; 4, -2 \right) \rangle & & \text{gr}(\mathbf{z}_4) = \left(-1; -2, 0 \right)/\langle \left(-3; 4, -2 \right) \rangle \\
\text{gr}(\mathbf{z}_2) &= \left(1; -3, 1 \right)/\langle \left(-3; 4, -2 \right) \rangle & & \text{gr}(\mathbf{w}_1) = \left(-\tfrac{1}{2}; -\tfrac{5}{2}, \tfrac{1}{2} \right)/\langle \left(-3; 4, -2 \right) \rangle \\
\text{gr}(\mathbf{z}_3) &= \left(-\tfrac{1}{2}; -1, 0 \right)/\langle \left(-3; 4, -2 \right) \rangle & & \text{gr}(\mathbf{w}_2) = \left(\tfrac{3}{2}; -\tfrac{7}{2}, \tfrac{3}{2} \right)/\langle \left(-3; 4, -2 \right) \rangle
\end{align*}

\subsection{Refined gradings for generators of $H_{\ast}(\widehat{\textit{CFA}}(M, \mu, \lambda) \boxtimes \widehat{\textit{CFD}}(N, 2\phi_1 + \phi_0, -\phi_1))$}
\label{subsec:grcomputation}

With $A(\theta_M)$ from Subsection \ref{subsec:M}, and both $D(\theta_{h(N, \s_0)})$ from Subsection \ref{subsec:N0} and $D(\theta_{h(N, \s_1)})$ from Subsection \ref{subsec:N1}, we can now compute the relative Maslov grading differences for the generators of $\widehat{\textit{HF}}(X)$. We do this first in detail for those $\mathfrak{t} \in \pi^{-1}(\s \times \s_0)$, and then briefly perform the same methods for those $\mathfrak{t} \in \pi^{-1}(\s \times \s_1)$ for completeness.

\begin{figure}[!ht]
\centering
\includegraphics[scale=0.7]{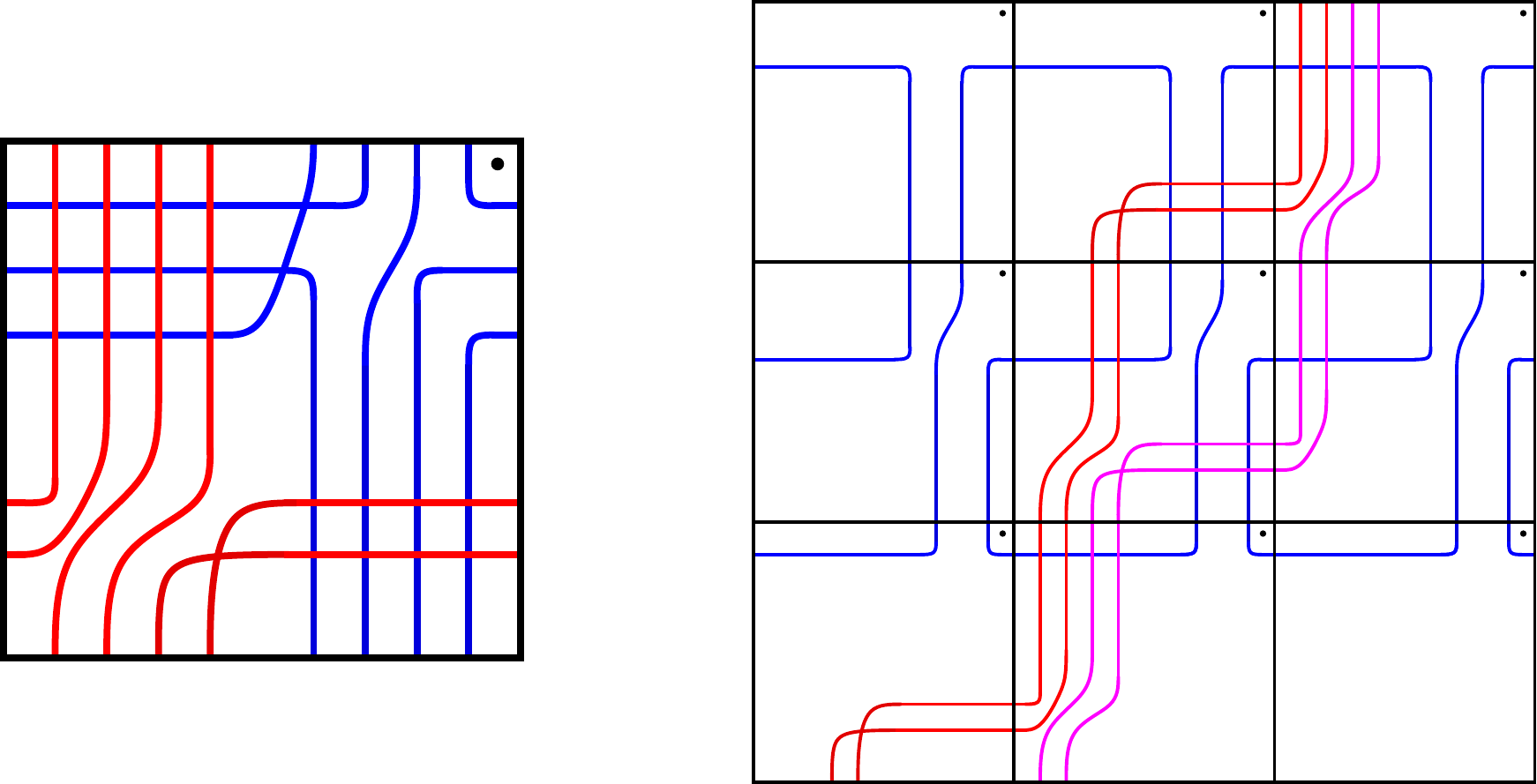}
\caption{Including $A(\theta_M)$ and $D(\theta_{h(N, \s_0)})$ in $T_M$, and lifting intersections to $\widetilde{T}$.}
\label{fig:T23N0BoxLift}
\end{figure}

Include $A(\theta_M)$ into the first quadrant of $T_M$ and $D(\theta_{h(N, \s_1)})$ into the third quadrant of $T_M$, extending both horizontally and vertically. This results in the configuration shown on the left of Figure \ref{fig:T23N0BoxLift}. According to the train track version of the pairing theorem, the summands $\widehat{\textit{HF}}(X, \mathfrak{t})$ for which $\mathfrak{t} \in \pi^{-1}(\s \times \s_0)$ are given by $H_{\ast}(\mathcal{C}(A(\theta_M), D(\theta_{h(N, \s_0)}), d^{\theta}))$. It is significantly more convenient to lift to $\widetilde{T}$, such as on the right of Figure \ref{fig:T23N0BoxLift}, to see which generators are annihilated under Floer homology.

\begin{figure}[!ht]
\centering
\def\svgscale{0.7}
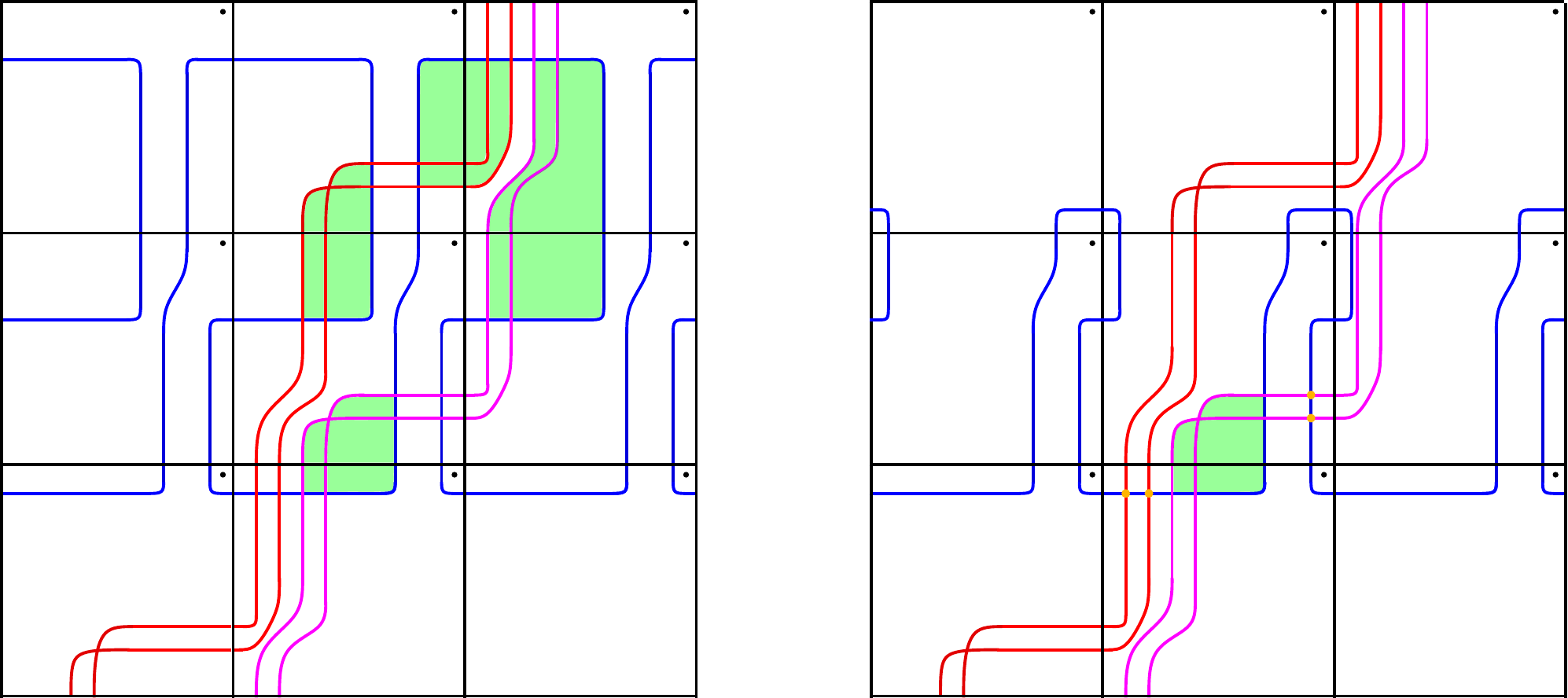
\caption{Left: Bigons between lifted intersections of $A(\theta_M)$ and $D(\theta_{h(N, \s_0)})$ to $\widetilde{T}$. Right: Homotoped $A(\theta_M)$, removing most generators annihilated in intersection Floer homology and leaving lifts of four generators that survive.}
\label{fig:T23N0DisksToped}
\end{figure}

This is done in stages throughout Figures \ref{fig:T23N0BoxLift} and \ref{fig:T23N0DisksToped} to illustrate the homotopy performed to remove most bigons that do not cover a basepoint. Ultimately, the four generators are in correspondence with the four surviving intersections in Figure \ref{fig:LargeT23boxN0}. They are $\mathbf{x}_1 \boxtimes \mathbf{a}_1$, $\mathbf{x_1} \boxtimes \mathbf{a}_2$, $\mathbf{y}_1 \boxtimes \mathbf{b}_1$, and $\mathbf{y_1} \boxtimes \mathbf{b}_2$. Their refined gradings lie in $G_{A, \Q}(M, \s) \times_{G_{\Q}} G_{D, \Q}(h(N, \s_0))$, which may be determined using the information from Subsections \ref{subsec:M} and \ref{subsec:N1} together with Theorem \ref{thm:relQ}.

\begin{figure}[!ht]
\vspace{1\intextsep}
\labellist
\small\hair 2pt
\pinlabel $\mathbf{x}_1$ at -10 197
\pinlabel $\mathbf{x}_2$ at -10 169
\pinlabel $\mathbf{x}_3$ at -10 141
\pinlabel $\mathbf{a}_1$ at 25 236
\pinlabel $\mathbf{a}_2$ at 48 236
\pinlabel $\mathbf{a}_3$ at 70 236
\pinlabel $\mathbf{a}_4$ at 93 236
\pinlabel $\mathbf{b}_2$ at 240 68
\pinlabel $\mathbf{b}_1$ at 240 45
\pinlabel $\mathbf{y}_4$ at 136 -10
\pinlabel $\mathbf{y}_3$ at 158 -10
\pinlabel $\mathbf{y}_2$ at 181 -10
\pinlabel $\mathbf{y}_1$ at 203 -10
\endlabellist
\centering
\includegraphics[scale=0.8]{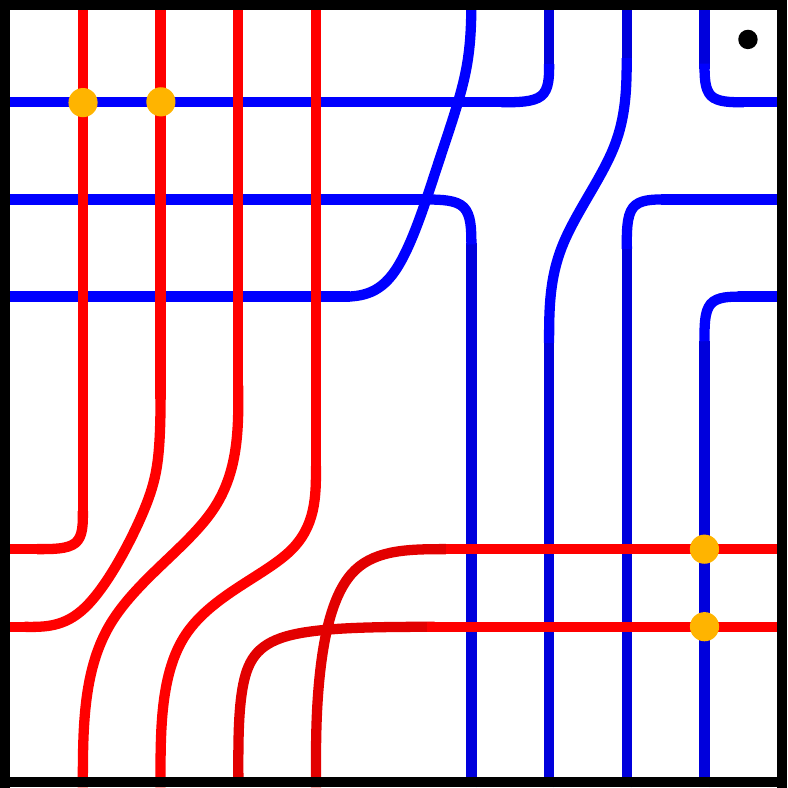}
\vspace{1\intextsep}
\caption{The four generators of $H_{\ast}(\widehat{\textit{CFA}}(M, \mu, \lambda) \boxtimes \widehat{\textit{CFD}}(N, 2\phi_1 + \phi_0, -\phi_1, \s_0))$.}
\label{fig:LargeT23boxN0}
\end{figure}

The gradings take values in $\langle \left(\tfrac{3}{2}; 0, 1 \right) \rangle \backslash G / \langle \left(-1; 4, -2 \right) \rangle$, and are given by the following:
\begin{align*}
\text{gr}(\mathbf{x}_1 \boxtimes \mathbf{a}_1) &= \text{gr}(\mathbf{x}_1)\text{gr}(\mathbf{a}_1) \\
&= \langle \left(\tfrac{3}{2}; 0, 1 \right) \rangle \backslash \left(0; 0, 0 \right) / \langle \left(-1; 4, -2 \right) \rangle \\
\text{gr}(\mathbf{x}_1 \boxtimes \mathbf{a}_2) &= \text{gr}(\mathbf{x}_1)\text{gr}(\mathbf{a}_2) \\
&= \langle \left(\tfrac{3}{2}; 0, 1 \right) \rangle \backslash \left(\tfrac{1}{2}; -2, 1 \right) / \langle \left(-1; 4, -2 \right) \rangle \\
\text{gr}(\mathbf{y_1} \boxtimes \mathbf{b}_1) &= \text{gr}(\mathbf{y_1})\text{gr}(\mathbf{b}_1) \\
&= \langle \left(\tfrac{3}{2}; 0, 1 \right) \rangle \backslash \left(-\tfrac{1}{2}; \tfrac{1}{2}, \tfrac{1}{2} \right) \left(-\tfrac{1}{2}; -\tfrac{3}{2}, \tfrac{1}{2} \right) / \langle \left(-1; 4, -2 \right) \rangle \\
&= \langle \left(\tfrac{3}{2}; 0, 1 \right) \rangle \backslash \left(0; -1, 1 \right) / \langle \left(-1; 4, -2 \right) \rangle \\
\text{gr}(\mathbf{y_1} \boxtimes \mathbf{b}_2) &= \text{gr}(\mathbf{y_1})\text{gr}(\mathbf{b}_2) \\
&= \langle \left(\tfrac{3}{2}; 0, 1 \right) \rangle \backslash \left(-\tfrac{1}{2}; \tfrac{1}{2}, \tfrac{1}{2} \right) \left(-\tfrac{1}{2}; -\tfrac{7}{2}, \tfrac{3}{2} \right) / \langle \left(-1; 4, -2 \right) \rangle \\
&= \langle \left(\tfrac{3}{2}; 0, 1 \right) \rangle \backslash \left(\tfrac{3}{2}; -3, 2 \right) / \langle \left(-1; 4, -2 \right) \rangle
\end{align*}

Acting over $\Q$ to make each $\text{Spin}^{c}$ component equal to $(0,0)$ yields the following:

\begin{align*}
\text{gr}_{\Q}(\mathbf{x}_1 \boxtimes \mathbf{a}_1) &= \langle (\tfrac{3}{2}; 0, 1) \rangle \backslash (\textcolor{violet}{0}; 0, 0) / \langle (-1; 4, -2) \rangle \\
\text{gr}_{\Q}(\mathbf{x}_1 \boxtimes \mathbf{a}_2) &= \langle (\tfrac{3}{2}; 0, 1) \rangle \backslash (\tfrac{1}{2}; -2, 1) / \langle (-1; 4, -2) \rangle \\
&= \langle (\tfrac{3}{2}; 0, 1) \rangle \backslash (\tfrac{1}{2}; -2, 1)(-\tfrac{1}{2}; 2, -1) / \langle (-1; 4, -2) \rangle\\
&= \langle (\tfrac{3}{2}; 0, 1) \rangle \backslash (\textcolor{violet}{0}; 0, 0) / \langle (-1; 4, -2) \rangle\\
\text{gr}_{\Q}(\mathbf{y_1} \boxtimes \mathbf{b}_1) &= \langle (\tfrac{3}{2}; 0, 1) \rangle \backslash (0; -1, 1) / \langle (-1; 4, -2) \rangle \\
&= \langle (\tfrac{3}{2}; 0, 1) \rangle \backslash (-\tfrac{3}{4}; 0, -\tfrac{1}{2})(0; -1, 1)(-\tfrac{1}{4}; 1, -\tfrac{1}{2}) / \langle (-1; 4, -2) \rangle \\
&= \langle (\tfrac{3}{2}; 0, 1) \rangle \backslash (-\tfrac{5}{4}; -1, \tfrac{1}{2})(-\tfrac{1}{4}; 1, -\tfrac{1}{2}) / \langle (-1; 4, -2) \rangle \\
&= \langle (\tfrac{3}{2}; 0, 1) \rangle \backslash (\textcolor{violet}{-\tfrac{3}{2}}; 0, 0) / \langle (-1; 4, -2) \rangle \\
\text{gr}_{\Q}(\mathbf{y_1} \boxtimes \mathbf{b}_2) &= \langle (\tfrac{3}{2}; 0, 1) \rangle \backslash (\tfrac{3}{2}; -3, 2) / \langle (-1; 4, -2) \rangle \\
&= \langle (\tfrac{3}{2}; 0, 1) \rangle \backslash (-\tfrac{3}{4}; 0, -\tfrac{1}{2})(\tfrac{3}{2}; -3, 2)(-\tfrac{3}{4}; 3, -\tfrac{3}{2}) / \langle (-1; 4, -2) \rangle \\
&= \langle (\tfrac{3}{2}; 0, 1) \rangle \backslash (-\tfrac{3}{4}; -3, \tfrac{3}{2})(-\tfrac{3}{4}; 3, -\tfrac{3}{2}) / \langle (-1; 4, -2) \rangle \\
&= \langle (\tfrac{3}{2}; 0, 1) \rangle \backslash (\textcolor{violet}{-\tfrac{3}{2}}; 0, 0) / \langle (-1; 4, -2) \rangle
\end{align*}

While not necessary to generate the desired surgery obstruction, the grading differences for the other four generators may be of independent interest. Performing the same methods for the pairing of $A(\theta_M)$ and $D(\theta_{h(N, \s_1)})$ yields the configuration in Figure \ref{fig:T23N1BoxLift}. The four surviving generators are $\mathbf{x}_1 \boxtimes \mathbf{z}_1$, $\mathbf{y}_1 \boxtimes \mathbf{w}_1$, $\mathbf{x}_1 \boxtimes \mathbf{z}_3$, and $\mathbf{y_1} \boxtimes \mathbf{w}_2$. Knowing these generators, we now compute their refined gradings in $G_{A, \Q}(M, \s) \times_{G_{\Q}} G_{D, \Q}(h(N, \s_1))$ using the information from Subsections \ref{subsec:M} and \ref{subsec:N0}.

The gradings take values in $\langle \left(\tfrac{3}{2}; 0, 1 \right) \rangle \backslash G / \langle \left(-3; 4, -2 \right) \rangle$, and are given by the following:
\begin{align*}
\text{gr}(\mathbf{x}_1 \boxtimes \mathbf{z}_1) &= \text{gr}(\mathbf{x}_1)\text{gr}(\mathbf{z}_1) \\
&= \langle (\tfrac{3}{2}; 0, 1) \rangle \backslash (0; 0, 0) / \langle (-3; 4, -2) \rangle \\
\text{gr}(\mathbf{y}_1 \boxtimes \mathbf{w}_1) &= \text{gr}(\mathbf{y}_1)\text{gr}(\mathbf{w}_1) \\
&= \langle (\tfrac{3}{2}; 0, 1) \rangle \backslash (-\tfrac{1}{2}; \tfrac{1}{2}, \tfrac{1}{2}) (-\tfrac{1}{2}; -\tfrac{5}{2}, \tfrac{1}{2}) / \langle (-3; 4, -2) \rangle \\
&= \langle (\tfrac{3}{2}; 0, 1) \rangle \backslash (\tfrac{1}{2}; -2, 1) / \langle (-3; 4, -2) \rangle \\
\text{gr}(\mathbf{x}_1 \boxtimes \mathbf{z}_3) &= \text{gr}(\mathbf{x}_1)\text{gr}(\mathbf{z}_3) \\
&= \langle (\tfrac{3}{2}; 0, 1) \rangle \backslash (-\tfrac{1}{2}; -1, 0) / \langle (-3; 4, -2) \rangle \\
\text{gr}(\mathbf{y}_1 \boxtimes \mathbf{w}_2) &= \text{gr}(\mathbf{y}_1)\text{gr}(\mathbf{w}_2) \\
&= \langle (\tfrac{3}{2}; 0, 1) \rangle \backslash (-\tfrac{1}{2}; \tfrac{1}{2}, \tfrac{1}{2}) (\tfrac{3}{2}; -\tfrac{7}{2}, \tfrac{3}{2}) / \langle (-3; 4, -2) \rangle \\
&= \langle (\tfrac{3}{2}; 0, 1) \rangle \backslash (\tfrac{7}{2}; -3, 2) / \langle (-3; 4, -2) \rangle
\end{align*}

\begin{figure}
\vspace{1\intextsep}
\labellist
\small\hair 2pt
\pinlabel $\mathbf{x}_1$ at -10 189
\pinlabel $\mathbf{x}_2$ at -10 168
\pinlabel $\mathbf{x}_3$ at -10 147
\pinlabel $\mathbf{z}_1$ at 18 220
\pinlabel $\mathbf{z}_2$ at 35 220
\pinlabel $\mathbf{z}_3$ at 52 220
\pinlabel $\mathbf{z}_4$ at 69 220
\pinlabel $\mathbf{w}_2$ at 185 92
\pinlabel $\mathbf{w}_1$ at 185 74
\pinlabel $\mathbf{y}_4$ at 104 28
\pinlabel $\mathbf{y}_3$ at 121 28
\pinlabel $\mathbf{y}_2$ at 138 28
\pinlabel $\mathbf{y}_1$ at 155 28
\endlabellist
\centering
\includegraphics[scale=0.7]{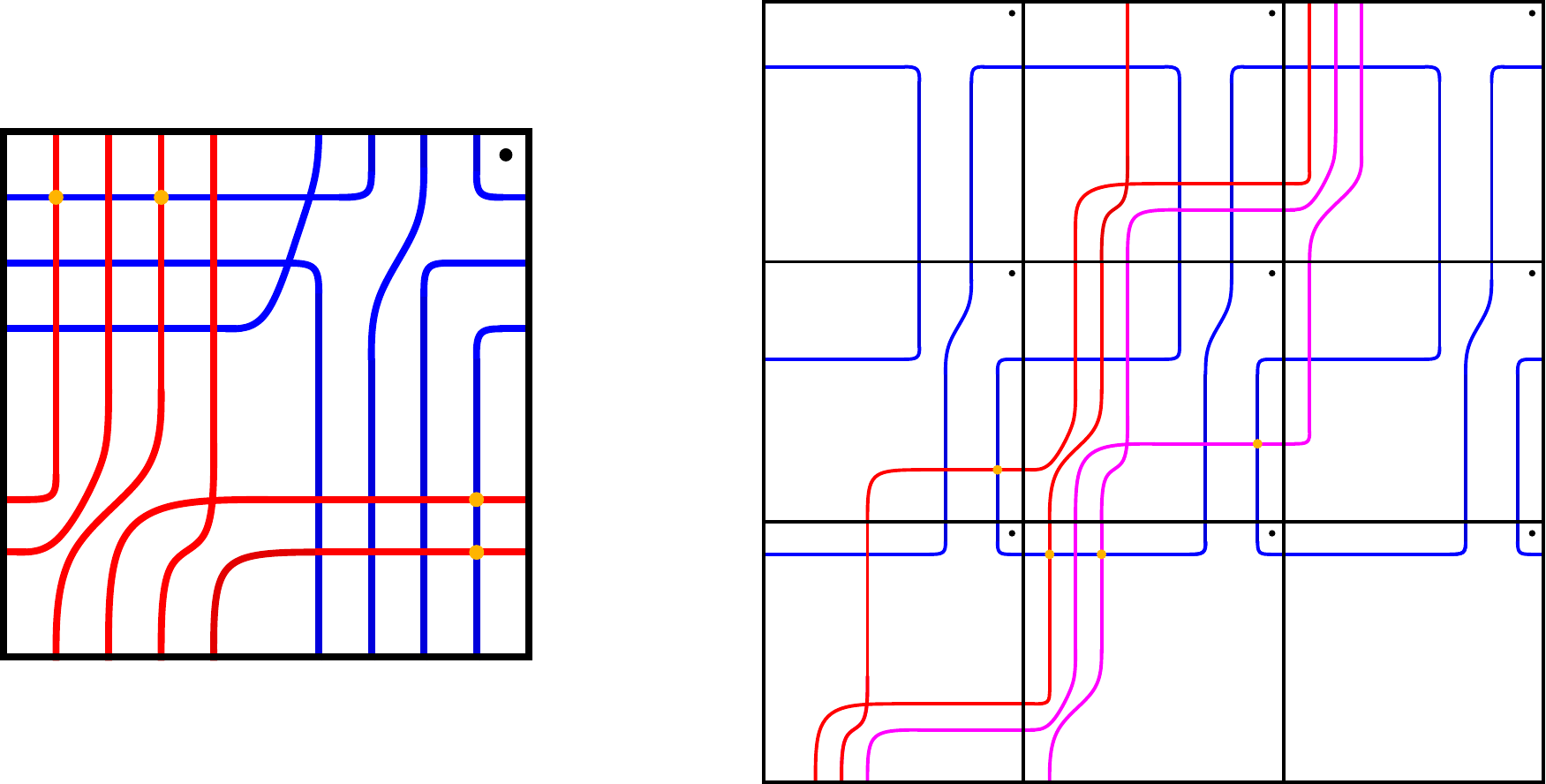}
\vspace{1\intextsep}
\caption{The four generators of $H_{\ast}(\widehat{\textit{CFA}}(M, \mu, \lambda) \boxtimes \widehat{\textit{CFD}}(N, 2\phi_1 + \phi_0, -\phi_1, \s_1))$.}
\label{fig:T23N1BoxLift}
\end{figure}

Acting over $\Q$ to make each $\text{Spin}^{c}$ component equal to $(0,0)$ yields the following:

\begin{align*}
\text{gr}_{\Q}(\mathbf{x}_1 \boxtimes \mathbf{z}_1) &= \langle (\tfrac{3}{2}; 0, 1) \rangle \backslash (\textcolor{violet}{0}; 0, 0) / \langle (-3; 4, -2) \rangle \\
\text{gr}_{\Q}(\mathbf{y}_1 \boxtimes \mathbf{w}_1) &= \langle (\tfrac{3}{2}; 0, 1) \rangle \backslash (\tfrac{1}{2}; -2, 1) / \langle (-3; 4, -2) \rangle \\
&= \langle (\tfrac{3}{2}; 0, 1) \rangle \backslash (\tfrac{1}{2}; -2, 1)(-\tfrac{3}{2}; 2, -1) / \langle (-3; 4, -2) \rangle\\
&= \langle (\tfrac{3}{2}; 0, 1) \rangle \backslash (\textcolor{violet}{-1}; 0, 0) / \langle (-3; 4, -2) \rangle\\
\text{gr}_{\Q}(\mathbf{x}_1 \boxtimes \mathbf{z}_3) &= \langle (\tfrac{3}{2}; 0, 1) \rangle \backslash (-\tfrac{1}{2}; -1, 0) / \langle (-3; 4, -2) \rangle \\
&= \langle (\tfrac{3}{2}; 0, 1) \rangle \backslash (\tfrac{3}{4}; 0, \tfrac{1}{2})(-\tfrac{1}{2}; -1, 0)(-\tfrac{3}{4}; 1, -\tfrac{1}{2}) / \langle (-3; 4, -2) \rangle \\
&= \langle (\tfrac{3}{2}; 0, 1) \rangle \backslash (\tfrac{3}{4}; -1, \tfrac{1}{2})(-\tfrac{3}{4}; 1, -\tfrac{1}{2}) / \langle (-3; 4, -2) \rangle \\
&= \langle (\tfrac{3}{2}; 0, 1) \rangle \backslash (\textcolor{violet}{0}; 0, 0) / \langle (-3; 4, -2) \rangle
\end{align*}
\begin{align*}
\text{gr}_{\Q}(\mathbf{y}_1 \boxtimes \mathbf{w}_2) &= \langle (\tfrac{3}{2}; 0, 1) \rangle \backslash (\tfrac{7}{2}; -3, 2) / \langle (-3; 4, -2) \rangle \\
&= \langle (\tfrac{3}{2}; 0, 1) \rangle \backslash (-\tfrac{3}{4}; 0, -\tfrac{1}{2})(\tfrac{7}{2}; -3, 2)(-\tfrac{9}{4}; 3, -\tfrac{3}{2}) / \langle (-3; 4, -2) \rangle \\
&= \langle (\tfrac{3}{2}; 0, 1) \rangle \backslash (\tfrac{5}{4}; -3, \tfrac{3}{2})(-\tfrac{9}{4}; 3, -\tfrac{3}{2}) / \langle (-3; 4, -2) \rangle \\
&= \langle (\tfrac{3}{2}; 0, 1) \rangle \backslash (\textcolor{violet}{-1}; 0, 0) / \langle (-3; 4, -2) \rangle
\end{align*}
\bigskip

With the desired grading differences at hand, we now establish the third lemma.

\begin{lemma}
Let $X = (S^3 \setminus \nu T(2,3)) \cup_h N$, where $h$ is any slope 2 cyclic gluing. Then there exist generators $\mathbf{x}, \mathbf{y}$ of $\widehat{\textit{HF}}(X)$ such that $\s(\mathbf{x}), \s(\mathbf{y}) \in \pi^{-1}(\s \times \s_0)$ and $\text{gr}_{\Q}(\mathbf{x})-\text{gr}_{\Q}(\mathbf{y}) = \tfrac{3}{2}$.
\label{lem:MLemgr}
\end{lemma}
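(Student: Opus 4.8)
The plan is to extract the claimed grading difference directly from the pairing and refined-grading computations already carried out in Subsections \ref{subsec:M}, \ref{subsec:N0}, and \ref{subsec:grcomputation}. First I would reduce to the prototypical gluing: by the Dehn-twisting normalization of Subsection \ref{subsec:dehntwist}, every slope $2$ cyclic gluing $h$ satisfies $\widehat{\textit{CFD}}(N, h^{-1}(\lambda), h^{-1}(\mu)) \cong \widehat{\textit{CFD}}(N, 2\phi_1+\phi_0, -\phi_1)$ as a relatively graded type D structure (since $N$ is a Heegaard Floer homology solid torus), so the relatively $\Q$-graded vector space $\widehat{\textit{HF}}(X)$ does not depend on which slope $2$ cyclic gluing is used, and it suffices to treat the prototypical $h$.

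Next I would isolate the part of $\widehat{\textit{HF}}(X)$ lying over $\s\times\s_0$. By Theorem \ref{thm:boxpairing} and the $\text{Spin}^c$ decomposition, this is $H_\ast(\widehat{\textit{CFA}}(M, \mu, \lambda) \boxtimes \widehat{\textit{CFD}}(N, 2\phi_1+\phi_0, -\phi_1, \s_0))$, and the train-track pairing of Subsection \ref{subsec:grcomputation} shows that exactly four generators survive in homology, namely $\mathbf{x}_1 \boxtimes \mathbf{a}_1$, $\mathbf{x}_1 \boxtimes \mathbf{a}_2$, $\mathbf{y}_1 \boxtimes \mathbf{b}_1$, $\mathbf{y}_1 \boxtimes \mathbf{b}_2$ (Figure \ref{fig:LargeT23boxN0}), each restricting to $\s$ on $M$ and to $\s_0$ on $N$. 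I would then take $\mathbf{x} = \mathbf{x}_1 \boxtimes \mathbf{a}_1$ and $\mathbf{y} = \mathbf{y}_1 \boxtimes \mathbf{b}_1$, so that $\s(\mathbf{x}), \s(\mathbf{y}) \in \pi^{-1}(\s\times\s_0)$ by construction.

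Then I would read off the refined $G$-set gradings: using Subsections \ref{subsec:M} and \ref{subsec:N0}, multiplying $\text{gr}(\mathbf{x}_1)$ by $\text{gr}(\mathbf{a}_1)$ and $\text{gr}(\mathbf{y}_1)$ by $\text{gr}(\mathbf{b}_1)$ in $G$ gives $\text{gr}(\mathbf{x}) = \langle(\tfrac{3}{2};0,1)\rangle\backslash(0;0,0)/\langle(-1;4,-2)\rangle$ and $\text{gr}(\mathbf{y}) = \langle(\tfrac{3}{2};0,1)\rangle\backslash(0;-1,1)/\langle(-1;4,-2)\rangle$; acting over $\Q$ to normalize the $\text{Spin}^c$ component of each to $(0,0)$ leaves Maslov components $0$ and $-\tfrac{3}{2}$ respectively. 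Since $\s(\mathbf{x})$ and $\s(\mathbf{y})$ are torsion and restrict to the same $\text{Spin}^c$ structure on each of $M$ and $N$ (this is weaker than, and is all we need in place of, $\s(\mathbf{x}) = \s(\mathbf{y})$ on $X$), Theorem \ref{thm:relQ} applies: these $G$-set gradings lie in a common $\Q$-orbit of $G_{A,\Q}(M,\s)\times_{G_\Q}G_{D,\Q}(h(N,\s_0))$ and determine the relative $\Q$-grading on $\widehat{\textit{HF}}(X)$. Hence $\text{gr}_{\Q}(\mathbf{x}) - \text{gr}_{\Q}(\mathbf{y}) = \tfrac{3}{2}$.

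The real work lies entirely upstream of this lemma, in the inputs already established: computing $\widehat{\textit{CFA}}(M, \mu, \lambda)$ and the twisted/reflected $\widehat{\textit{CFD}}(N, 2\phi_1+\phi_0, -\phi_1, \s_0)$ together with their refined $G$-set gradings, and carefully tracking which of the many intersection points of $A(\theta_M)$ with $D(\theta_{h(N,\s_0)})$ persist after homotoping away the bigons disjoint from the basepoints. Given those inputs, the only points needing care here are that the two chosen generators genuinely survive the pairing differential (they are among the four of Figure \ref{fig:LargeT23boxN0}) and that the difference of the two $G$-set gradings is well defined, which holds because fixing the $\text{Spin}^c$ component to $(0,0)$ removes the residual ambiguity coming from the stabilizers $\langle(\tfrac{3}{2};0,1)\rangle$ and $\langle(-1;4,-2)\rangle$, so the Maslov components $0$ and $-\tfrac{3}{2}$ are unambiguous.
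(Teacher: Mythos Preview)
Your approach matches the paper's: reduce to the prototypical gluing via the Heegaard Floer homology solid torus property of $N$, read off the four surviving generators over $\s\times\s_0$, and extract the $\tfrac{3}{2}$ difference from the refined gradings of $\mathbf{x}_1\boxtimes\mathbf{a}_1$ and $\mathbf{y}_1\boxtimes\mathbf{b}_1$. Two points deserve tightening.

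First, your reduction is overstated. You assert that the relatively $\Q$-graded vector space $\widehat{\textit{HF}}(X)$ is independent of which slope $2$ cyclic gluing is used. The paper explicitly warns (in the remark immediately following the proof) that this is \emph{not} what the Dehn-twisting invariance of $\widehat{\textit{CFD}}(N)$ establishes: relative $\Q$-grading differences between generators lying over \emph{different} preimages $\pi^{-1}(\s\times\s_0)$ and $\pi^{-1}(\s\times\s_1)$ should be expected to vary with $n$. What the argument actually gives---and all that the lemma requires---is that for each fixed $i$ the type D structure $\widehat{\textit{CFD}}(N,(h^n)^{-1}(\lambda),(h^n)^{-1}(\mu),\s_i)$ has the same decorated graph and refined gradings for every $n$, so the relative $\Q$-grading differences \emph{within} $\pi^{-1}(\s\times\s_i)$ are independent of $n$. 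You should state only this weaker claim.

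Second, you silently rely on the train-track prototype pairing of \cite[Theorem 16]{HRW16} to identify the four surviving generators and their gradings with those of the box tensor product. The paper pauses to verify the hypotheses of that theorem: $A(\theta_M)$ is reduced (no $\varnothing$-labeled edges) and $D(\theta_{h(N,\s_0)})$ is special bounded (no $\varnothing$-labeled edges and no oriented cycles, the latter holding after the twists even though $D(\theta_{N,\s_0})$ itself has an oriented cycle). Without this check, the correspondence between the train-track intersection Floer homology and $\widehat{\textit{HF}}(X)$ is not justified, so you should include it.
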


\begin{proof}
Recall that any slope 2 gluing satisfying the cyclic condition of Proposition \ref{prop:pairhomology} induces
\[
[(h^n)_{\ast}] = \begin{pmatrix}
1 & n \\
2 & 2n-1 \\
\end{pmatrix}.
\]
on homology for some $n \in \Z$. We saw from Subsection \ref{subsec:dehntwist} that it is obtained from the base slope 2 gluing $h$ by the pre-composition
\[
[(h^n)_{\ast}] = [h_{\ast} \circ T^n] = \begin{pmatrix}
1 & 0 \\
2 & -1 \\
\end{pmatrix}
\begin{pmatrix}
1 & n \\
0 & 1 \\
\end{pmatrix}.
\]

For $X^n = M \cup_{h^n} N$, the pairing theorem implies \\
$\widehat{\textit{HF}}(X^n, \mathfrak{t}) \cong H_{\ast}(\widehat{\textit{CFA}}(M, \mu, \lambda) \boxtimes \widehat{\textit{CFD}}(N, (h^n)^{-1}_{\ast}(\lambda), (h^n)^{-1}_{\ast}(\mu), \s_0))$ for $\mathfrak{t} \in \pi^{-1}(\s \times \s_0)$. The required type D structure for pairing is $\widehat{\textit{CFD}}(N, 2\phi_1 + (1-2n)\phi_0, -\phi_1 + n\phi_0, \s_0)$, whose type A realization can be obtained from that of $\widehat{\textit{CFD}}(N, \phi_1 - n\phi_0, \phi_0, \s_0)$ by using precisely the same series of Dehn twists and reflections from Subsection \ref{subsec:N0}. Since $\widehat{\textit{CFD}}(N, \phi_1 - n\phi_0, \phi_0, \s_0) \cong \widehat{\textit{CFD}}(N, \phi_1, \phi_0, \s_0)$ because $N$ is a Heegaard Floer homology solid torus, we have that the type A realizations of $\widehat{\textit{CFD}}(N, 2\phi_1 + (1-2n)\phi_0, -\phi_1 + n\phi_0, \s_0)$ and $\widehat{\textit{CFD}}(N, 2\phi_1 + \phi_0, -\phi_1, \s_0)$ agree. Further, the decorated graph representations and refined gradings from Subsection \ref{subsec:N0}, as well as the relative $\Q$-grading differences (between generators belonging to the same pre-image $\pi^{-1}(\s \times \s_i)$) from Subsection \ref{subsec:grcomputation} all hold for each gluing $h^n$, regardless of $n \in \Z$. 

These grading differences were computed using the prototype pairing theorem for train tracks, whose use is currently unjustified. They will correspond to grading differences of $\widehat{\textit{HF}}(X)$ if the bordered invariants in pairing satisfy the mild hypotheses of \cite[Theorem 16]{HRW16}. Specifically, we require that $A(\theta_M)$ is reduced and that $D(\theta_{h(N, \s_0)})$ is special bounded. The realization $A(\theta_M)$ is reduced since it is generated from a decorated graph for which no edges are labeled with $\varnothing$. A train track is special bounded if it is bounded and almost reduced, meaning that its underlying decorated graph does not contain an oriented cycle and any edges labeled with $\varnothing$ occur in specific configurations. The underlying decorated graph for $D(\theta_{h(N, \s_0)})$ does not have any edges labeled with $\varnothing$, and also does not contain an oriented cycle (even though $D(\theta_{N, \s_0})$ does).

With equal $\text{spin}^{c}$ components, we can recover the $\Q$-grading difference between $\mathbf{x}_1 \boxtimes \mathbf{a}_1$ and $\mathbf{y}_1 \boxtimes \mathbf{b}_1$ as the difference between their Maslov components: $\text{gr}_{\Q}(\mathbf{x}_1 \boxtimes \mathbf{a}_1)-\text{gr}_{\Q}(\mathbf{y}_1 \boxtimes \mathbf{b}_1) = \tfrac{3}{2}$. Together with the Dehn twisting invariance of $\widehat{\textit{CFD}}(N, \phi_1, \phi_0, \s_0)$, this establishes the lemma.
\end{proof}

\begin{remark}
We caution the reader that the techniques used to prove this lemma do \textbf{not} show that the Dehn twisting invariance of $\widehat{\textit{CFD}}(N, \phi_1, \phi_0)$ in pairing provides an integral family of manifolds with relatively-graded, isomorphic $\widehat{\textit{HF}}$. All that we show is that the relative $\Q$-grading differences between generators $\mathbf{x}, \mathbf{y}$ with $\s(\mathbf{x}), \s(\mathbf{y}) \in \pi^{-1}(\s \times \s_i)$ are independent of Dehn twisting $N$ along $\phi_0$. We should expect to see the relative $\Q$-grading differences grow between generators providing Floer homology supported in $\text{spin}^{c}$ structures from different pre-images.
\end{remark}
\smallskip

\subsection{Proof of the main theorem}
\label{subsec:mainproof}

We are now equipped to prove Theorem \ref{thm:main}. \\

\noindent\textit{Proof.}
Suppose $X = (S^3 \setminus \nu J) \cup_h N$ is realized as $8$-surgery along $K$ with $g(K)=2$. If $J$ is the unknot, then Lemma \ref{lem:MLemFinite} using Doig's classification together with \cite[Theorem 1.6]{NZ18} implies that $X = S^3_8(T(2,5))$. This manifold is an L-space, and is the Seifert fibered manifold $(-1; \frac{1}{2}, \frac{1}{2}, \frac{2}{5})$ with base orbifold $S^2$.

Suppose for the sake of contradiction that some non-trivial $J \subset S^3$ gives rise to $X$. Then Lemma \ref{lem:MLemS3} implies $J=T(2,3)$ and that $X$ is an L-space. As an L-space, the relative $\Q$-grading differences for generators of $\widehat{\textit{HF}}(X)$ are given by differences of the $d$-invariants. Since $K$ is a genus two L-space knot, we have $\widehat{\textit{HFK}}(K) = \widehat{\textit{HFK}}(T(2,5))$ and so the $d$-invariants of surgery are 
\begin{align*}
d(S^3_8(K),[s]) = \left\{
	\begin{array}{l c}
	 -1/8 & s \equiv 5 \,\, (\text{mod} \,\, 8) \\
	 1/4 & s \equiv 6 \,\, (\text{mod} \,\, 8) \\
	 -9/8 & s \equiv 7 \,\, (\text{mod} \,\, 8) \\
	 -1/4 & s \equiv 0 \,\, (\text{mod} \,\, 8) \\
	 -9/8 & s \equiv 1 \,\, (\text{mod} \,\, 8) \\
	 1/4 & s \equiv 2 \,\, (\text{mod} \,\, 8) \\
	 -1/8 & s \equiv 3 \,\, (\text{mod} \,\, 8) \\
	 -1/4 & s \equiv 4 \,\, (\text{mod} \,\, 8)
	\end{array}
	\right.
\end{align*}
Lemma \ref{lem:MLemgr} shows that there must be generators $\mathbf{x}, \mathbf{y}$ for $\widehat{\textit{HF}}(X)$ such that $\text{gr}_{\Q}(\mathbf{x})-\text{gr}_{\Q}(\mathbf{y}) = \tfrac{3}{2}$. This is impossible given $d(S^3_8(K), [s])$ above, which is the contradiction we sought.
\qed

\section{The situation for $Y \neq S^3$}
\label{sec:YnotS3}

As alluded to in the introduction, the immersed curves techniques were stated for knots $J \subset S^3$, but hold more generally for knots $J$ in integer homology sphere L-spaces. When $Y$ is an integer homology sphere L-space different from $S^3$, we can use Corollary \ref{cor:irreduciblecomplement} together with a corollary of Baldwin and Vela-Vick. If $J \subset Y$ is a nullhomologous knot with irreducible complement and dim $\widehat{\textit{HFK}}(Y, K) = 3$, their work implies that $Y \setminus \nu J \cong S^3 \setminus \nu T(2, 3)$ \cite{BV18}. Thus, the proof of Theorem \ref{thm:main} still holds. 

We used Floer homology considerations to constrain the form of $\widehat{\textit{HF}}(Y \setminus \nu J)$, which allowed us to determine the possible knots $J$ together with known properties coming from the algorithm from \cite[Proposition 47]{HRW18}. This algorithm is the immersed curves form of a theorem of Lipshitz, Ozsv\'ath, and Thurston that computes the type D structure for a knot complement from $CFK^-(Y, K)$, which holds for integer homology sphere L-spaces $Y$ \cite{LOT18b}. In the absence of a full analogue of this algorithm, we could potentially handle the remaining cases for $Y$ just by knowing what form the essential curve components must take in $\overline{T}_M$. In particular, in \cite[Section 5]{KWZ20} it is suggested that the essential curve component potentially picks up a non-trivial local system if the map taking vertical homology to horizontal homology in the full knot Floer complex is interesting.

Still, we can narrow down some properties of $Y$ through homological means. Consider the pairing $X=M \cup_h N$, where $M$ is a rational homology solid torus and $h$ is a slope $p$ gluing, defined as in Definition \ref{def:X}. Let $\lambda_M$ denote the rational longitude of $M$, and $\mu_M$ the respective dual curve. Then we have 
\[
|H_1(X)| = 4d|H|\Delta(\lambda_M, h_{\ast}(\phi_0)),
\]
where $d = o(\lambda_M)$ in $H_1(M)$ and $H$ is the torsion subgroup of $H_1(M)$ \cite[Section 3]{BGW13}. Since $h$ has gluing slope $p$, we have $\Delta(\lambda_M, h_{\ast}(\phi_0)) = p$, and so $|H_1(X)| = 4pd|H| = 8$.

Recall that $\lambda_M$ generates Ker$(H_1(\partial M) \rightarrow H_1(M)) \cong d\Z \oplus 0 \subset \Z \oplus \Z$.  The rational longitude includes as $i_{\ast}(\lambda_M) \in H \subset H_1(M)$ with finite order (and is unique among slopes in $H_1(\partial M)$ with this property \cite{Wat12}). Then we must have $d = 1$, as otherwise $d > 1$ implies $|H| \geq d > 1$. Thus, $\lambda_M$ includes as a null-homologous curve.

Suppose $|H| = 2$, so that $p = 1$. A similar computation to that in the proof of Proposition \ref{prop:pairhomology} shows that the induced map $h_{\ast}$ in the Mayer-Vietoris sequence for $X = M \bigcup_h N$ does not interact with $H$. This implies that $H_1(X)$ carries $H$ as a free summand. However $|H| = 2$ is not relatively prime to the orders of the other summands of $H_1(X)$, and so we must have $|H|=1$ to have a pairing with cyclic $H_1(X)$. We are left with $|H| = 1$ and $p = 2$, which has $M$ as the exterior of a knot in an integer homology sphere.

\bibliographystyle{alpha}
\bibliography{bibliography}

\end{document}